		\renewcommand*\l@subsection{\@tocline{2}{0pt}{2.75pc}{5pc}{}}
	\DeclareFontFamily{U}{mathx}{}
	\DeclareFontShape{U}{mathx}{m}{n}{<-> mathx10}{}
	\DeclareSymbolFont{mathx}{U}{mathx}{m}{n}
	\DeclareMathAccent{\widehat}{0}{mathx}{"70}
	\DeclareMathAccent{\widecheck}{0}{mathx}{"71}
    \DeclareFontFamily{U}{mathx}{\hyphenchar\font45}
    \DeclareFontShape{U}{mathx}{m}{n}{<-> mathx10}{}
    \DeclareSymbolFont{mathx}{U}{mathx}{m}{n}
    \DeclareMathAccent{\widebar}{0}{mathx}{"73}
\definecolor{jaw}{rgb}{0,.5,0}  
\definecolor{laura}{rgb}{.4, 0, .6}  
\definecolor{forestgreen}{rgb}{.2,.6,.2} 
\definecolor{darkturquoise}{RGB}{33,108,115} 
	\crefname{subsection}{Subsection}{subsections}
\newcommand{\ifwork}[1]{\ifthenelse{\boolean{workmode}}{#1}{}}
\newcommand{\comment}[1]{}
\newcommand{\mute}[1]{}
\newcommand{\printname}[1]{}
	\renewcommand{\comment}[1]{{\marginpar{*}\ \scriptsize{#1}\ }}
	\renewcommand{\printname}[1]
	{{\color{brown}{\makebox[0pt]{\hspace{-1in}\raisebox{8pt}{\tiny #1}}}}}
\newcommand{\labell}[1]{\label{#1} \printname{#1}}
    \newcommand{\zed}{\ensuremath{\mathbb Z}}
		\newcommand{\calA}{{\mathcal{A}}}
		\newcommand{\calB}{{\mathcal{B}}}
		\newcommand{\calG}{{\mathcal{G}}}
		\newcommand{\calH}{{\mathcal{H}}}
		\newcommand{\calR}{{\mathcal{R}}}
		\newcommand{\calW}{{\mathcal{W}}}
		\newcommand{\CC}{{\mathbb{C}}}  
		\newcommand{\PP}{{\mathbb{P}}}  
		\newcommand{\RR}{{\mathbb{R}}}  
		\renewcommand{\SS}{{\mathbb{S}}} 
		\newcommand{\ZZ}{{\mathbb{Z}}}  
		\newcommand{\Ab}{{\mathbf{Ab}}} 
        \newcommand{\act}{{\operatorname{act}}} 
        \newcommand{\Cat}{{\mathbf{Cat}}} 
        \newcommand{\Fun}{{\operatorname{Fun}}} 
		\newcommand{\id}{{\operatorname{id}}}  
		\newcommand{\inv}{{\operatorname{inv}}} 
		\newcommand{\LieGpd}{{\mathbf{LieGpd}}} 
		\newcommand{\mult}{{\operatorname{m}}} 
		\newcommand{\pr}{{\operatorname{pr}}} 
        \newcommand{\rel}{{\operatorname{rel}}} 
		\newcommand{\Rep}{{\operatorname{Rep}}} 
		\newcommand{\src}{{\operatorname{s}}} 
		\newcommand{\Stab}{{\operatorname{Stab}}} 
		\newcommand{\trg}{{\operatorname{t}}} 
		\newcommand{\unit}{{\operatorname{u}}} 
		\newcommand{\SO}{{\operatorname{SO}}}  
		\newcommand{\U}{{\operatorname{U}}}  
		\newcommand{\eps}{\varepsilon}  
		\newcommand{\ftimes}[2]{{\lrsubscripts{\times}{#1}{#2}}} 
		\newcommand{\toto}{{~\rightrightarrows~}} 
		\newcommand{\ot}[1]{{\,\underset{\raisebox{1mm}{$\scriptstyle{#1}$}}{\overset{\simeq}{\scalebox{2}[1]{$\twoheadleftarrow$}}}\,}} 
		\newcommand{\subdwe}{{\,\overset{\simeq}{\scalebox{2}[1]{$\twoheadrightarrow$}}\,}} 
		\newcommand{\uto}[1]{{\,\underset{\raisebox{1mm}{$\scriptstyle{#1}$}}{\longrightarrow}\,}} 
		\DeclareRobustCommand*{\mfaktor}[3][]
		{
		   { \mathpalette{\mfaktor@impl@}{{#1}{#2}{#3}} }
		}
		\newcommand*{\mfaktor@impl@}[2]{\mfaktor@impl#1#2}
		\newcommand*{\mfaktor@impl}[4]{
		   \settoheight{\faktor@zaehlerhoehe}{\ensuremath{#1#2{#3}}}%
		   \settoheight{\faktor@nennerhoehe}{\ensuremath{#1#2{#4}}}%
		      \raisebox{-0.5\faktor@zaehlerhoehe}{\ensuremath{#1#2{#3}}}%
		      \mkern-4mu\diagdown\mkern-5mu%
		      \raisebox{0.5\faktor@nennerhoehe}{\ensuremath{#1#2{#4}}}%
		}
	\newcommand{\ifsection}[2]{\ifthenelse{\boolean{sections}}{#1}{#2}} 
		\theoremstyle{plain}
			\newtheorem{theorem}{Theorem}[section] 
			\newtheorem{theorem}{Theorem} 
		\newtheorem*{theorem*}{Theorem}
		\newtheorem{proposition}[theorem]{Proposition}
		\newtheorem{corollary}[theorem]{Corollary}
		\newtheorem*{conjecture*}{Conjecture}
		\newtheorem{lemma}[theorem]{Lemma}
		\newtheorem*{lemma*}{Lemma}
		\theoremstyle{definition}
	\newenvironment{definition}
		{\pushQED{\qed}\definitionx}
		{\popQED\enddefinitionx}
	\newenvironment{example}
		{\pushQED{\qed}\examplex}
		{\popQED\endexamplex}
	\newenvironment{remark}
		{\pushQED{\qed}\remarkx}
		{\popQED\endremarkx} 
\tikzset{snake it/.style={decorate,decoration=snake}}
\newcommand{\firstmidarrow}{\tikz \draw[-{angle 90}] (0,0) -- +(.2,.1);}
\newcommand{\secondmidarrow}{\tikz \draw[-{angle 90}] (0,0) -- +(.2,.2);}
\newcommand{\ActGpd}{{\mathbf{ActGpd}}} 
\newcommand{\BI}{{\operatorname{BI}}} 
\newcommand{\eq}{{\operatorname{eq}}} 
\newcommand{\Coeff}{{\operatorname{Coeff}}} 
	\subjclass[2020]{Primary 22A22, 55N91; Secondary 58H05} 
	\keywords{action groupoids, Morita invariance, Bredon cohomology, Bredon-Illman cohomology} 
\begin{document}

	\author{Carla Farsi}
	\address{Department of Mathematics, University of Colorado at Boulder, Boulder, Colorado 80309, USA}
	\email{carla.farsi@colorado.edu}

    \author{Laura Scull}
	\address{Department of Mathematics, Fort Lewis College, Durango, Colorado 81301}
	\email{scull\_l@fortlewis.edu}

	\author{Jordan Watts}
	\address{Department of Mathematics, Central Michigan University, Pearce Hall 214, Mount Pleasant, Michigan 48859, USA}
	\email{jordan.watts@cmich.edu}
    
	\title{Twisted Bredon-Illman Cohomology is a Morita Invariant}
	\date{\today}

	\begin{abstract}
We show that the twisted Bredon-Illman cohomology defined by Mukherjee-Mukherjee applied to compact Lie group action groupoids is Morita-invariant.   This cohomology uses coefficient systems twisted over the discrete tom Dieck equivariant fundamental groupoid.    To show Morita  invariance, we use bibundles to transfer coefficient systems from one groupoid to another Morita equivalent one.   This generalizes results of Pronk-Scull on ordinary Bredon-Illman cohomology by removing both the finite isotropy condition and  restrictions on the coefficient systems.  
 
	\end{abstract}

	\maketitle

    \tableofcontents

	\thispagestyle{empty}

\section{Introduction}\labell{s:intro}

The study of symmetries via equivariant cohomology theory has wide applications across many fields.  One of the most flexible invariants for equivariant spaces is Bredon-Illman cohomology (often just called Bredon cohomology), defined in \cite{bredon:eqvt,illman}.  This theory incorporates information about the fixed sets of the group action, giving more detailed information about the equivariant structure than other cohomology theories.  It has been used as a basis for developing equivariant obstruction theory, orientation theory, and covering spaces among other things (see, for instance, \cite{CMW,CW}). 

Bredon-Illman theory has been extended to representable (topological) orbifolds, also called orbispaces, in Pronk-Scull \cite{PS:translation}.  In particular, in this work, the authors considered actions of compact Lie groups with finite isotropy.  Many orbifolds can be represented in this way; see \cite{henriques-metzler,pardon}.  Action groupoids corresponding to such actions represent orbifolds, but not uniquely: any orbifold can be represented by multiple group actions, with the action groupoids representing the same orbifold if they are Morita equivalent.   Pronk-Scull show that for certain conditions on coefficient systems (called ``orbifold coefficient systems''),  Bredon-Illman cohomology is Morita-invariant, thus giving an orbifold invariant.  They also give an example of a pair of Morita equivalent action groupoids that yield different Bredon-Illman cohomologies using coefficient systems that are not of the orbifold type \cite[Example 5.3]{PS:translation}.  They say,
    \begin{quote}
        ``One way that the equivariant theory may distinguish finer information than that carried by the orbifold structure is to differentiate between (disjoint) fixed point sets which have isomorphic isotropy and in fact are part of the fixed point set of the same subgroup in some representations...''
    \end{quote}

Thus we see that the discrepancy is in the treatment of disconnected fixed sets, which may have the same isotropy in one representative but distinct (isomorphic) isotropy types in another.  This lead the authors to considering the so-called disconnected Bredon cohomology theory, developed for finite group actions by Fine \cite{fine}; see also \cite{golasinski:disconnected}.  This version of Bredon-Illman cohomology specifically allows coefficient systems which treat different connected components of a fixed set differently.  Our attempts to generalise this to compact group actions and prove Morita invariance failed: we require a ``pushforward coefficient system'' (see \cref{p:pushforward coeff system}), and using the set-up of disconnected Bredon cohomology failed to produce a well-defined such functor.  In attempting to remedy this problem, the authors were lead naturally to consider a twisted Bredon-Illman cohomology theory over the discrete tom Dieck fundamental groupoid. This approach was further encouraged by the fact that  Pronk and Scull had already showed that the discrete tom Dieck fundamental groupoid is a Morita invariant of representable orbifolds \cite{PS:fund-gpd}.  

Bredon-Illman cohomology with coefficient systems twisted by the discrete tom Dieck fundamental groupoid has been developed by Mukherjee and Mukherjee \cite{MuMu}, called Bredon-Illman cohomology with local coefficients there; see also \cite{sen}.  We will call this theory ``twisted'' Bredon-Illman cohomology.

In this paper, we show that twisted Bredon-Illman cohomology defined by \cite{MuMu} (there referred to as Bredon-Illman cohomology with local coefficients) is a Morita invariant of any compact Lie group action groupoid, see \cref{t:bredon-illman}.  In particular, it is a Morita invariant of any orbifold, with no conditions on the coefficient systems.  Moreover, we obtain an explanation for the earlier needed restriction on coefficient systems:  we may have an orbifold in which an ordinary Bredon-Illman cohomology for one representation corresponds to a twisted  Bredon-Illman cohomology for a different representation.   

This result joins other Morita-invariant related results already in the literature,  including the following: We have already mentioned \cite{PS:translation} above.  Juran uses orthogonal spectra to study equivariant cohomology theories of orbifolds \cite{juran}.  Farsi, Scull, and Watts show that the Bredon-Illman cohomology of an action of a transitive topological groupoid is isomorphic to the Bredon-Illman cohomology of an action of the stabiliser group of the groupoid in \cite{FSW:transitive}; the two groupoids involved are (topologically) Morita equivalent.  In \cite{moerdijk-svensson}, a similar category to that developed in \cite{MuMu} is defined for a $G$-space (where $G$ is discrete) for the purposes of computing the Serre spectral sequence in the equivariant setting of a $G$-fibration.  The resulting cohomology is equivalent to that of \cite{MuMu} (and that used in the current paper) when $G$ is a discrete group \cite{mukherjee-pandey}, and thus the twisted Bredon-Illman cohomology of \cite{MuMu} generalises that of \cite{moerdijk-svensson}; our result shows that in the case when $G$ is finite, the cohomology of \cite{moerdijk-svensson} is a Morita invariant.

The paper is organised as follows.  In \cref{s:act gpds}, we introduce the preliminaries required for the paper; namely, we define Lie groupoids, bibundles, and introduce the main examples we will use to illustrate the theory throughout the paper.  In \cref{s:fund gpd}, we review the discrete tom Dieck fundamental groupoid and establish its Morita invariance for action groupoids of compact Lie group actions \cref{t:fund gpd morita}. In \cref{s:coeff systems}, we study in detail the equivariant interactions present in a bibundle, and use these to define pullback and pushforward coefficient systems.  The pushforward coefficient system is crucial to the paper, and depends on choices.  Different choices, however, yield coefficient systems that differ by a natural isomorphism \cref{p:pushforward coeff system}, and due to this the twisted Bredon-Illman cohomologies in the end will be isomorphic.  Finally, in \cref{s:bredon-illman}, we review twisted Bredon-Illman cohomology, and prove our main result, ending the paper with a couple of applications and example computations.

\subsection*{Acknowledgements:}  The authors wish to thank Dorette Pronk for many discussions leading to this paper.  The first author's research was partly supported by the Simon's collaboration grant MPS-TSM-00007731.  The second and  third author thank the University of Colorado Boulder for their hospitality during a crucial part of this work.  The third author acknowledges the support of Central Michigan University via a research grant allowing the author to travel to Boulder.

\section{Action Groupoids \& Bibundles}\labell{s:act gpds}

In this section, we outline the basic setting for the rest of the paper.  Our goal is to show Morita invariance of twisted Bredon-Illman cohomology, so this section focuses on the idea of Morita equivalence and the approach that we will be taking to it.  We work in the context of Lie groupoids, and we introduce the weak equivalences that generate Morita equivalence.  We then review how we can localise the category of Lie groupoids at weak equivalences using bibundles following \cite{lerman:orbifolds,MM:sheaves}, setting up the categorical framework for our work. 

Once we have sketched out this general framework, we introduce our objects of interest, the  action groupoids with equivariant maps between them.  These groupoids come specifically from the action of a compact Lie groupoid on a manifold.     We discuss localisation in the context of  these action groupoids and give a concrete description of what bibundles look like in this context.    We illustrate our definitions with examples which will be used going forward throughout the rest of the paper.  

Throughout this paper, 'manifold' will mean a smooth manifold without boundary, all actions are smooth, all groups are compact Lie and and all subgroups are closed. 

We begin with the 2-category of Lie groupoids.  

    \begin{definition}[Lie Groupoid]\labell{d:lie gpd}
        A \textbf{Lie groupoid} $\calG=(\calG_1\toto\calG_0)$ is a small category in which the space of objects $\calG_0$ and arrows $\calG_1$ (all of which are invertible) are smooth manifolds, and the source map $\src$, target map $\trg$, unit map $\unit$, multiplication map $\mult$, and inversion map $\inv$ are all smooth, with source and target maps submersive.

        A \textbf{smooth functor} $\varphi\colon\calG\to\calH$ between Lie groupoids is a functor whose defining maps on objects and arrows are both smooth.   (It suffices to assume the functor is smooth on arrows here, as smoothness on objects follows.)

        Given smooth functors $\varphi,\psi\colon\calG\to\calH$, a \textbf{smooth natural transformation} is a natural transformation $S\colon\varphi\Rightarrow\psi$ that is defined by  a smooth map $\calG_0\to\calH_1$.

        Together, Lie groupoids, smooth functors, and smooth natural transformations form a (strict) $2$-category $\LieGpd$.
    \end{definition}

We consider Lie groupoids up to Morita equivalence, which means we want to invert a class of smooth functors known as weak equivalences. 

\begin{definition}[Weak Equivalence]\labell{d:ess equiv}
		A functor $\varphi\colon\calG\to\calH$ is a \textbf{weak equivalence} (sometimes  called an \textbf{equivalence} or an \textbf{essential equivalence} in the literature) if it satisfies the following two conditions: 
		\begin{enumerate}
			\item \textbf{Smooth Essentially Surjective:} The induced  map $(\calG_0)\ftimes{\varphi_0}{t}\calH_1\to\calH_0\colon (x,h)\mapsto \src(h)$ is a surjective submersion.
			\item  \textbf{Smooth Fully Faithful:} The induced map $\calG_1\to(\calG_0^2)\ftimes{\varphi_0^2}{(s,t)}\calH_1\colon g\mapsto (\src(g),\trg(g),\varphi(g))$ is a diffeomorphism.\qedhere
		\end{enumerate}
\end{definition}

We want to pass to a category which localises these weak equivalences.  There are a number of ways to approach this in the literature, including bicategories of fractions of \cite{Pronk}, anafunctors \cite{roberts:anafunctors} and bibundles of \cite{lerman:orbifolds,MM:sheaves}.   In this work, we will use the bibundle approach.   To define these, we need the following.  
    
    \begin{definition}(Groupoid Actions, Principal Bundles, and Bibundles)\labell{d:gpd action}
        Let $\calG$ be a Lie groupoid and $Z$ a smooth manifold. A \textbf{left groupoid action} of $\calG$ on $Z$ is a Lie groupoid $\calG\ltimes Z$ with arrow space $(\calG_1)\ftimes{\src}{a}Z$ and object space $Z$, where $a\colon Z\to\calG_0$ is the smooth \textbf{anchor map}.  The source map is the projection onto $Z$ and the target map is the smooth action map $\act\colon(g,z)\mapsto g\cdot z$, which is required to satisfy:
        \begin{enumerate}
            \item $\unit_{a(z)}\cdot z=z$ for all $z\in Z$,
            \item $a(g\cdot z)=\trg(g)$ for all $(g,z)\in(\calG\ltimes Z)_1$, and
            \item $g_1\cdot(g_2\cdot z)=(g_1g_2)\cdot z$ for composable $g_1$ and $g_2$ such that $(g_1g_2,z)\in(\calG\ltimes Z)_1$.
        \end{enumerate}

        A \textbf{left principal $\calG$-bundle} $\rho\colon Z\to Y$ is a left $\calG$-action on $Z$ such that
        \begin{enumerate}
            \item $\rho$ is a surjective submersion,
            \item $\rho$ is \textbf{$\calG$-invariant}: $\rho(g\cdot z)=\rho(z)$ for all $(g,z)\in(\calG\ltimes Z)_1$, and
            \item the map $$A\colon(\calG\ltimes Z)_1\to Z\ftimes{\rho}{\rho}Z\colon(g,z)\mapsto(z,g\cdot z)$$ is a diffeomorphism.
        \end{enumerate}
A similar list of conditions defines a {right groupoid action} and a  \textbf{right principal $\calH$-bundle}. 

        A \textbf{bibundle} $Z$ from $\calG$ to $\calH$, denoted $\calG\ot{\lambda}Z\uto{\rho}\calH$, is a right principal $\calH$-bundle $\lambda\colon Z\to\calG_0$ with anchor map $\rho\colon Z\to\calH_0$ such that $\rho$ is a $\calG$-action with anchor $\lambda$, the actions of $\calG$ and $\calH$ commute, and $\rho$ is $\calG$-invariant.

        Given two bibundles $Z$ and $Z'$ between $\calG$ and $\calH$, a \textbf{biequivariant diffeomorphism} $F\colon Z\to Z'$ is a diffeomorphism such that $F(g\cdot z\cdot h)=g\cdot F(z)\cdot h$ for all $(g,z,h)\in(\calG_1)\ftimes{\src}{\lambda} Z\ftimes{\rho}{\trg}(\calH_1)$.
    \end{definition}

 We can use these bibundles as morphisms between Lie groupoids in place of smooth functors.  

    \begin{definition}[Bibundle from Functor]\label{d:functor bibundle}
         Given a smooth functor $\varphi\colon\calG\to\calH$, there is a corresponding bibundle $P_\varphi:=\left(\calG\ot{\lambda}(\calG_0)\ftimes{\varphi}{\trg}\calH_1\uto{\rho}\calH\right)$ from $\calG$ to $\calH$ with left $\calG$-action given by $g\cdot(x,h)=(\trg(g),\varphi(g)h)$ with anchor map $\lambda(x,h)=x$, and right $\calH$-action given by $(x,h)\cdot h'=(x,hh')$ with anchor map $\rho\colon(x,h)\mapsto\src(h)$. 
    \end{definition}
     
 Under this correspondence our weak equivalences wind up as biprincipal bibundles.   
 
    \begin{definition}[Biprincipal Bibundle]\labell{d:biprincipal}
         A bibundle as in \cref{d:gpd action} is \textbf{biprincipal} if both $\rho$ and $\lambda$ are principal bundles with respect to the $\calG$- and $\calH$-actions.
         
         If there exists a biprincipal bibundle between $\calG$ and $\calH$, then way say that $\calG$ and $\calH$ are \textbf{Morita equivalent}. 
    \end{definition} 
 
    \begin{proposition}[Weak Equivalences \& Biprincipality]\labell{p:functor bibundle}
         If $\varphi$ is a weak equivalence, then the bibundle of \cref{d:functor bibundle} is biprincipal.  
    \end{proposition}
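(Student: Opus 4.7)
The plan is to verify directly the three conditions of principality in \cref{d:gpd action} for the left $\calG$-action on $P_\varphi$ with anchor $\rho\colon(x,h)\mapsto\src(h)$, since the right principality of $\calH$ on $P_\varphi$ is already built into \cref{d:functor bibundle} (the anchor $\lambda$ is just the projection to $\calG_0$, and the $\calH$-action acts freely and transitively on fibres by postcomposition).  Each of the three conditions for left principality matches up cleanly with a piece of \cref{d:ess equiv}, so the proof is really just a translation between the two definitions.

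First I would check that $\rho$ is a surjective submersion, which is immediate: the map $\rho\colon(\calG_0)\ftimes{\varphi}{\trg}\calH_1\to\calH_0$, $(x,h)\mapsto\src(h)$, is precisely the map appearing in the smooth essentially surjective condition~(1) of \cref{d:ess equiv}, and that condition asserts exactly that it is a surjective submersion.  Next, $\calG$-invariance of $\rho$ is a one-line computation: $\rho(g\cdot(x,h))=\rho(\trg(g),\varphi(g)h)=\src(\varphi(g)h)=\src(h)=\rho(x,h)$.

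The main (and really only nontrivial) step is the third, showing that
\[
A\colon(\calG_1)\ftimes{\src}{\lambda}P_\varphi\to P_\varphi\ftimes{\rho}{\rho}P_\varphi,\qquad (g,(x,h))\mapsto\bigl((x,h),(\trg(g),\varphi(g)h)\bigr),
\]
is a diffeomorphism.  Here is where the smooth fully faithful condition~(2) of \cref{d:ess equiv} does the work.  Given a point $\bigl((x,h),(x',h')\bigr)$ in the fibre product, the condition $\src(h)=\src(h')$ means $h'h^{-1}\in\calH_1$ is an arrow from $\varphi(x)$ to $\varphi(x')$; by smooth fully faithfulness, there is a unique $g\in\calG_1$ with $\src(g)=x$, $\trg(g)=x'$, and $\varphi(g)=h'h^{-1}$, and the assignment $(x,x',h'h^{-1})\mapsto g$ is smooth as it is the inverse of the diffeomorphism in condition~(2).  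Setting $A^{-1}((x,h),(x',h'))=(g,(x,h))$ gives a smooth inverse: one checks $A(g,(x,h))=((x,h),(\trg(g),\varphi(g)h))=((x,h),(x',h'h^{-1}h))=((x,h),(x',h'))$, and conversely $A^{-1}\circ A=\id$ by uniqueness of $g$.

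I expect no real obstacle; the content of the proposition is essentially that \cref{d:functor bibundle} was set up precisely so that the two halves of \cref{d:ess equiv} correspond to the two principality conditions for $P_\varphi$.  The only thing requiring a moment's care is confirming that $h'h^{-1}$ is a well-defined arrow in $\calH_1$ (it is, because the source condition $\src(h)=\src(h')$ guarantees they are composable after inversion), and that the smoothness of the inverse map $A^{-1}$ follows from the smoothness of the inverse of the fully faithful diffeomorphism, composed with smooth manipulations of arrows in $\calH$.
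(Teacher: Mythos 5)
Your proof is correct. Note that the paper does not actually include a proof of \cref{p:functor bibundle} — it is stated as a standard fact from the bibundle literature (\cite{lerman:orbifolds,MM:sheaves}) — and your direct verification is exactly the expected argument: condition (1) of \cref{d:ess equiv} is literally the statement that $\rho\colon(x,h)\mapsto\src(h)$ is a surjective submersion, invariance is the one-line source computation, and condition (2) provides the smooth inverse of the division map $A$ via $\bigl((x,h),(x',h')\bigr)\mapsto\bigl(x,x',h'h^{-1}\bigr)$ followed by the inverse of the fully-faithfulness diffeomorphism. The only point you take as given — that $\lambda\colon P_\varphi\to\calG_0$ is right $\calH$-principal for an arbitrary smooth functor — is indeed part of the assertion of \cref{d:functor bibundle}, so your scoping of what needs to be checked matches the paper's.
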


    We can form a bicategory with objects given by Lie groupoids, arrows given by  bibundles and 2-cells defined by biequivariant diffeomorphisms between bibundles. In this bicategory,  we can weakly invert a biprincipal bibundle by simply reversing their order.  
 
Within this context, we will focus our attention on the \textbf{action groupoids}.  

    \begin{definition}[Action Groupoids]\labell{d:gp action}
        Let $G$ be a Lie group acting on a manifold $X$ on the left.  The corresponding \textbf{action groupoid} $G\ltimes X$ has arrow space $G\times X$ and object space $X$; the source map is the projection onto $X$ and the target map is the action map $(g,x)\mapsto g\cdot x$.  Similarly, a right action of a Lie group $H$ on $X$ induces a Lie groupoid $X\rtimes H$.
    \end{definition}
 
Below we give some examples of action groupoids.  These will be referenced and serve as running examples throughout the remainder of the paper. 

    \begin{example}[{\bf Groupoid A}]\labell{x:a1}
        Define the action groupoid  $G\ltimes X:=\ZZ/2\ltimes\SS^1$, where $\ZZ/2$ acts on the circle  by reflection across the $y$-axis in $\CC\supset\SS^1$. 
    \end{example}

    \begin{example}[{\bf Groupoid B}]\labell{x:b1}
        Let $Y:=\U(1)\times_{\ZZ/2} \SS^1$ be the quotient of $\U(1)\times \SS^1$ by the diagonal action of $\ZZ/2$, and define the $H:=\U(1)$-action on $Y$ given by $\U(1)\times_{\ZZ/2}\SS^1$, with action of $\U(1)$ given by multiplication on the left. We consider the action groupoid $H \ltimes Y$.  The space $Y$ is a Klein bottle which can be viewed as a compact cylinder with the $\U(1)$-action spinning it on its axis, and both ends replaced with copies of $\RR\PP^1$ (\emph{i.e.}\ cross-caps) on which $\U(1)$ acts ineffectively with representation kernel equal to $\ZZ/2\cong\{\pm 1\}\leq\U(1)$.  
    \end{example}

    \begin{example}[{\bf Groupoid C}]\labell{x:c1}
        Define the groupoid  $D_4\ltimes\SS^1$, the action of dihedral group of order $4$ (equal to $\ZZ/2\times\ZZ/2$) acting on $\SS^1$ by reflections: $(-1,1)$ reflects across the $y$-axis, and $(1,-1)$ across the $x$-axis. The former fixes $N$ and $S$, the north and south poles of $\SS^1$, whereas the latter fixes $E$ and $W$, the east and west poles of $\SS^1$.  
    \end{example}

    \begin{example}[{\bf Groupoid D}]\labell{x:d1}
        Define   $\SO(n) \ltimes\SS^n$ where the action of $\SO(n)$ on $\SS^n$ is defined by rotating $\SS^n$ about the axis through its north and south poles. 
    \end{example}

We now look at equivariant maps between action groupoids.  

   \begin{definition}[Equivariant Functor]\labell{x:eqvar functor}
        Given Lie groups $G$ and $H$ acting on manifolds $X$ and $Y$ respectively, action groupoids $G\ltimes X$ and $H\ltimes Y$,  a smooth functor $\varphi\colon G\ltimes X\to H\ltimes Y$ is an \textbf{equivariant functor} if $\varphi(g,x)=\left(\widetilde{\varphi}(g),\varphi(x)\right)$ for some Lie group homomorphism $\widetilde{\varphi}\colon G\to H$.  In particular, this encodes all \textbf{$\widetilde{\varphi}$-equivariant maps}; that is, maps $\varphi\colon X\to Y$ such that $\varphi(g\cdot x)=\widetilde{\varphi}(g)\cdot\varphi(x)$.
    \end{definition}

 Lie group action groupoids, equivariant functors, and smooth natural transformations between these give us the  sub-$2$-category $\ActGpd$ of $\LieGpd$.

 Now we want to look at what the localisation of weak equivalences looks like for  $\ActGpd$.   If our groupoids are action groupoids, then a bibundle between them has a recognisable form. To describe this, we use the following. 
    \begin{definition}[Equivariant Principal Bundle: Bierstone, \cite{bierstone:eqvt-bdle}]\labell{d:eqvt princ bdle}
        Given Lie groups $G$ and $H$, a principal $H$-bundle $\lambda\colon Z\to X$ is \textbf{$G$-equivariant} if $Z$ and $X$ are $G$-manifolds, $\lambda$ is $G$-equivariant, and the $G$- and $H$-actions on $Z$ commute.  Such a bundle is \textbf{$G$-locally trivial} if for each $x\in X$ there is a $\Stab(x)$-invariant neighbourhood $U$ of $x$ and a $\Stab(x)$-equivariant bundle diffeomorphism $\psi\colon\lambda^{-1}(U)\to U\times\lambda^{-1}(x)$.
    \end{definition}

    Some authors allow the $G$- and $H$-actions on $Z$ to interact non-trivially in \cref{d:eqvt princ bdle}, inducing an action of the semidirect product $G\ltimes H$ on  $Z$.  We will not require this much generality.  An alternative definition uses slices (modelled on normal spaces to orbits) to define an equivariant bundle (see, for instance, Lashof \cite{lashof}), but this is equivalent to Bierstone's definition above \cite[Lemmas 1.1,1.2]{lashof}\footnote{Replace continuity with smoothness where appropriate.}.  

    An important result concerning equivariant principal bundles is: 

    \begin{theorem}[Local Triviality of Equivariant Bundles (Bierstone), {\cite[Theorem 4.2]{bierstone:eqvt-bdle}}]\labell{t:bierstone}
        If $G$ and $H$ are compact Lie groups and $\lambda\colon Z\to X$ is a $G$-equivariant principal $H$-bundle, then $\lambda$ is $G$-locally trivial.
    \end{theorem}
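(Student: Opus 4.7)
The plan is to deduce the statement from the standard compact-group slice theorem applied twice: first to the $G$-action on $X$ to reduce to the isotropy group, and second to a product action on the total space to construct the local trivialization.

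First I would fix $x\in X$ with stabiliser $K:=\Stab(x)\leq G$ and apply the slice theorem for the compact $G$-action on $X$ to produce a $K$-invariant slice $S\subset X$ through $x$ so that $G\cdot S$ is an open $G$-tube with $G\cdot S\cong G\times_K S$. Because a $K$-equivariant trivialisation of $\lambda|_{\lambda^{-1}(S)}$ over a $K$-invariant neighbourhood of $x$ induces, via the associated bundle construction $G\times_K(-)$, a $G$-equivariant trivialisation of $\lambda$ over the $G$-saturation, it suffices to produce the former. So I reduce to the following situation: set $Z:=\lambda^{-1}(S)$, which carries commuting smooth actions of $K$ (from $G$) and $H$ (the principal action), and hence a smooth action of the compact Lie group $K\times H$, with $\lambda|_Z\colon Z\to S$ an $H$-principal bundle and $x\in S$ a $K$-fixed point.

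Next I would identify the $K\times H$-isotropy at a chosen point $z_0\in F:=\lambda^{-1}(x)$. Since $F$ is a free $H$-torsor and $K$ preserves $F$ (as $K$ fixes $x$ and the actions commute), there exists a unique function $\phi\colon K\to H$ such that $k\cdot z_0=z_0\cdot\phi(k)$; standard arguments show $\phi$ is a smooth group homomorphism. Consequently the stabiliser $\Gamma:=(K\times H)_{z_0}$ is the graph $\{(k,\phi(k)^{-1}):k\in K\}$, which is a closed Lie subgroup of $K\times H$ isomorphic to $K$, and the $K\times H$-orbit of $z_0$ is exactly the fibre $F$ (since $H$ acts transitively on it).

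Now I would apply the slice theorem to the $(K\times H)$-action on $Z$ at $z_0$: this yields a $\Gamma$-invariant slice $T\subset Z$ through $z_0$ and a $(K\times H)$-equivariant open embedding $(K\times H)\times_\Gamma T\hookrightarrow Z$ whose image is an open tube around $F$. Dimension-counting gives $\dim T=\dim Z-\dim H=\dim S$, and because $T$ meets the $(K\times H)$-orbit transversally and the fibre direction is exactly the orbit direction, $\lambda|_T\colon T\to S$ is a local diffeomorphism at $z_0$; after shrinking $T$ to a $\Gamma$-invariant open we may assume $\lambda|_T$ is a $\Gamma$-equivariant diffeomorphism onto a $K$-invariant open neighbourhood $U$ of $x$ in $S$ (where the $\Gamma$-action on $T$ becomes the $K$-action on $U$ under $\Gamma\cong K$). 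Using the diffeomorphism $(K\times H)/\Gamma\cong H$ via $(k,h)\Gamma\mapsto\phi(k)h$, the local model $(K\times H)\times_\Gamma T$ becomes $H\times T\cong H\times U$; the resulting $K$-equivariant bundle isomorphism $\psi\colon\lambda^{-1}(U)\to U\times H$ has $K$-action $k\cdot(u,h)=(k\cdot u,\phi(k)h)$ and restricts to $\psi\colon\lambda^{-1}(x)\to\{x\}\times H$ as desired.

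The main obstacle is the transition between the abstract slice-theorem output $(K\times H)\times_\Gamma T$ and the concrete trivialisation $U\times\lambda^{-1}(x)$; this is where the graph structure of $\Gamma$ is essential, because only then does $(K\times H)/\Gamma$ acquire the structure of an $H$-torsor that identifies with a fibre of $\lambda$. The remaining checks (smoothness of $\phi$, transversality of $T$ to $F$, and the fact that $\Gamma$-equivariance of $\lambda|_T$ corresponds to $K$-equivariance on $U$) are routine, but one must be careful with left/right conventions so that the principal $H$-action and the $K$-action on $H\times U$ genuinely commute.
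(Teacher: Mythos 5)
The paper does not actually prove this statement: it is imported verbatim from Bierstone, so there is no internal argument to compare against and your proof must stand on its own. Its core --- identifying the $(K\times H)$-isotropy at $z_0\in\lambda^{-1}(x)$ as the graph of a homomorphism $\phi\colon K\to H$ and applying the slice theorem upstairs (the same device the paper later uses in \cref{l:gamma}) --- is sound, but your first reduction is a step that fails. A $K$-equivariant trivialisation of $\lambda$ over (a piece of) the slice $S$ does \emph{not} induce a $G$-equivariant trivialisation over the saturation $G\cdot S$: there is no $G$-action on the fibre $\lambda^{-1}(x)$ at all, and the associated bundle $G\times_K\bigl(U'\times\lambda^{-1}(x)\bigr)$ is in general not a product over $G\times_K U'$. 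Concretely, take the Hopf bundle $\lambda\colon\SS^3\to\SS^2$ with $G=\SU(2)$ acting via the standard representation and $H=\U(1)$ acting by scalars: at any $x$ the orbit is all of $\SS^2$, so the slice is the single point $\{x\}$, the trivialisation over the slice is vacuous, and yet the bundle over the saturation $\SS^2$ is nontrivial. Because you then run the second slice argument inside $\lambda^{-1}(S)$, what your construction actually delivers is a $K$-equivariant trivialisation of $\lambda^{-1}(U)$ for $U$ a $K$-invariant open subset of the slice $S$; such a $U$ is not a neighbourhood of $x$ in $X$ (in the Hopf example it is just $\{x\}$), whereas \cref{t:bierstone}, and the way it is used in \cref{l:lifting}, require $U$ open in $X$.

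The repair is to discard the first step and run your second step on the full total space. The $(K\times H)$-action on $Z$ itself still has orbit through $z_0$ equal to the fibre $\lambda^{-1}(x)$ (since $K$ fixes $x$ and $H$ is transitive on the fibre), so the slice $T$ at $z_0$ now has dimension $\dim X$, and the same transversality count shows $\lambda|_T$ is a local diffeomorphism at $z_0$. After a $\Gamma$-invariant shrinking (possible since $\Gamma$ is compact and fixes $z_0$), $\lambda|_T$ becomes a $\Gamma\cong K$-equivariant diffeomorphism onto a $K$-invariant open neighbourhood $U$ of $x$ \emph{in $X$}, and the map $(t,h)\mapsto t\cdot h$ gives the required $H$-equivariant, $K$-equivariant bundle diffeomorphism $T\times H\cong U\times\lambda^{-1}(x)\to\lambda^{-1}(U)$ exactly as in your final paragraph (smoothness of the inverse comes from the division map of the principal bundle, and the convention-dependent identification $(K\times H)/\Gamma\cong H$ is harmless). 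With that change your argument is correct, and it is in the same spirit as the graph-of-$\zeta_z$ computations the paper carries out after quoting Bierstone.
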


These equivariant principal bundles are exactly what we need to describe bibundles between action groupoids. 

    \begin{proposition}[Bibundles Between Action Groupoids, {\cite[Proposition 33]{FSW:action}}]\labell{p:bibundle form}
        If $G\ltimes X$ and $Y\rtimes H$ are left and right Lie group action groupoids, a bibundle $Z$ from $G\ltimes X$ to $Y\rtimes H$ is exactly a  $G$-equivariant principal $H$-bundle $\lambda\colon Z\to X$ as in  \cref{d:eqvt princ bdle} equipped with a $(\pr_2\colon G\times H\to H)$-equivariant functor $\rho\colon Z\to Y$.      
    \end{proposition}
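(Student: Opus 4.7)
The plan is to unpack both sides of the claimed equivalence and read off the translation. The key reduction I would first verify is a standard fact: for a Lie group action groupoid $K\ltimes W$, the data of a left groupoid action on a manifold $Z$ with anchor $\alpha\colon Z\to W$ is equivalent to the data of an ordinary left $K$-action on $Z$ together with a $K$-equivariant smooth map $\alpha$. The translation is direct: set $k\cdot z:=(k,\alpha(z))\cdot z$ in one direction, and recover the groupoid action by declaring $(k,\alpha(z))\cdot z:=k\cdot z$ in the other, with the compatibility of source/target forcing $K$-equivariance of $\alpha$. The analogous statement holds for right actions of $W\rtimes K$.

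Given a bibundle $Z$ from $G\ltimes X$ to $Y\rtimes H$, I would apply this observation on both sides. The left $(G\ltimes X)$-action with anchor $\lambda\colon Z\to X$ yields a left $G$-action on $Z$ making $\lambda$ a $G$-equivariant smooth map. The right $(Y\rtimes H)$-action with anchor $\rho\colon Z\to Y$ yields a right $H$-action on $Z$ making $\rho$ an $H$-equivariant smooth map. The commuting-actions axiom of the bibundle passes directly to commutation of the $G$- and $H$-actions on $Z$. Right principality of the $\calH$-bundle $\lambda$ collapses exactly to the statement that $\lambda\colon Z\to X$ is a principal $H$-bundle in the ordinary sense, because the division map appearing in \cref{d:gpd action} specialises to the division map for the $H$-action. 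Together with the $G$-equivariance of $\lambda$ and the commutation of actions, this is precisely Bierstone's $G$-equivariant principal $H$-bundle of \cref{d:eqvt princ bdle}. Finally, the $\calG$-invariance of $\rho$ becomes $G$-invariance $\rho(g\cdot z)=\rho(z)$, which combined with $H$-equivariance of $\rho$ is exactly $(\pr_2\colon G\times H\to H)$-equivariance.

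For the converse I would reverse each step of this dictionary: given an equivariant principal bundle $\lambda\colon Z\to X$ with commuting $G$- and $H$-actions and a $\pr_2$-equivariant $\rho\colon Z\to Y$, the group-to-groupoid action correspondence produces a left $(G\ltimes X)$-action and a right $(Y\rtimes H)$-action on $Z$ with anchors $\lambda$ and $\rho$ respectively, and each bibundle axiom (commutation, $\calG$-invariance of $\rho$, right-principality of $\lambda$) is inherited from its counterpart on the bundle side. The two constructions are mutually inverse on the nose, preserving the underlying manifold, anchors, and actions.

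The main obstacle is purely bookkeeping: tracking left-versus-right conventions and source-versus-target choices for $G\ltimes X$ and $Y\rtimes H$, and verifying that the right-principality condition for the $\calH$-bundle reduces cleanly to ordinary principality of the $H$-bundle. There is no genuinely deep content beyond a careful definitional unpacking, which is consistent with the fact that the result is simply cited from \cite{FSW:action}.
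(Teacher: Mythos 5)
Your argument is correct: the paper itself gives no proof of this proposition (it is quoted from \cite[Proposition 33]{FSW:action}), and your definitional unpacking — identifying actions of the action groupoids $G\ltimes X$ and $Y\rtimes H$ on $Z$ with ordinary commuting $G$- and $H$-actions having equivariant anchors, reducing groupoid-principality of $\lambda$ to ordinary principality of the $H$-action over $X$, and reading $\calG$-invariance plus $H$-equivariance of $\rho$ as $\pr_2$-equivariance — is exactly the translation the cited result rests on. The only point needing care, which you flag yourself, is the source/target convention for the right action groupoid $Y\rtimes H$ when checking that $\rho$ intertwines the actions, and this is pure bookkeeping.
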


    \begin{remark}\labell{r:bibundle}
        In the rest of the paper,  in order to consistently define composition in the tom Dieck fundamental groupoid in \cref{s:fund gpd}, we will treat the $H$-action on a bibundle $Z$ as in \cref{p:bibundle form} as a left-action; thus, $G\times H$ acts on $Z$ by $(g,h)\cdot z=gzh^{-1}$.
    \end{remark}

    \begin{corollary}\labell{c:bibundle form}
        If $G \ltimes X$ and $H \ltimes Y$ are two Morita equivalent action groupoids, then the biprincipal bibundle $G\ltimes X\ot{\lambda}Z\underset{\raisebox{1mm}{$\scriptstyle\rho$}}{\subdwe} H\ltimes Y$ between them is both a $G$-equivariant principal $H$-bundle $\lambda\colon Z \to X$ and an $H$-equivariant principal $G$-bundle $\rho\colon  Z \to Y$.  
    \end{corollary}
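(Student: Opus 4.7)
The plan is to obtain both halves of the conclusion by applying \cref{p:bibundle form} twice: once to the bibundle as given, and once to the flipped bibundle going in the reverse direction. By \cref{d:biprincipal}, the biprincipality of $Z$ means that both $\lambda$ and $\rho$ are principal bundles for the appropriate actions. Applying \cref{p:bibundle form} directly to $G\ltimes X\ot{\lambda}Z\uto{\rho}H\ltimes Y$ yields that $\lambda\colon Z\to X$ is a $G$-equivariant principal $H$-bundle in the sense of \cref{d:eqvt princ bdle}; this disposes of half the claim.

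For the remaining half, I would observe that the data of a bibundle from $\calG$ to $\calH$ can be reinterpreted as a bibundle from $\calH$ to $\calG$ by swapping the roles of the two anchor maps and of the left/right actions (using the convention of \cref{r:bibundle} that converts a right action of a Lie group into a left action via inversion). Biprincipality of $Z$ is symmetric in $\calG$ and $\calH$, so the flipped bibundle $H\ltimes Y\ot{\rho}Z\uto{\lambda}G\ltimes X$ is again biprincipal. Now applying \cref{p:bibundle form} to this flipped bibundle gives that $\rho\colon Z\to Y$ is an $H$-equivariant principal $G$-bundle, which is exactly the second half of the statement.

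The only point requiring a little care is the bookkeeping when swapping left and right actions under the flip, and checking that the commutativity of the $G$- and $H$-actions on $Z$ (part of the bibundle axioms in \cref{d:gpd action}) is preserved; both are symmetric between the two groupoids, so no genuine obstacle arises. In short, the corollary is an immediate consequence of \cref{p:bibundle form} together with the symmetry of \cref{d:biprincipal}.
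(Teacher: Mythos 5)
Your argument is correct and matches the paper's intent: the paper states this corollary without proof precisely because it follows immediately from \cref{p:bibundle form} applied to the bibundle and to its flip, with biprincipality (which is symmetric in the two groupoids) guaranteeing that the flipped data is again a bibundle, exactly as you argue. Your attention to the left/right bookkeeping via \cref{r:bibundle} is the only point of care, and you handle it correctly.
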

    
We  form a bicategory $\ActGpd[\calW^{-1}_{\eq}]$ with objects given by  action groupoids of Lie group actions, arrows given by bibundles (which are of the form of \cref{p:bibundle form}), and 2-cells given by  biequivariant diffeomorphisms.   Then  $\ActGpd[\calW^{-1}_{\eq}]$ is  the localisation of $\ActGpd$ at equivariant weak equivalences $\calW_\eq$ \cite[Theorem 38]{FSW:action}. This, in turn, is equivalent to the full sub-bicategory of action groupoids localised at all weak equivalences between them.  Similar statements hold for the restriction to action groupoids of Lie group actions satisfying certain properties, including compactness.  See \cite{FSW:action} for details, in which the authors use so-called anafunctors instead of bibundles.

We will work in the setting of $\ActGpd[\calW^{-1}_{\eq}]$ for the rest of this paper.   This allows us to recognise a Morita equivalence between action groupoids either by a functor which is an equivariant weak equivalence, or by a biprincipal (equivariant) bibundle corresponding to it as in   \cref{d:functor bibundle}.  The proofs of the weak equivalence claims below are straightforward, hence omitted.

    \begin{example} \labell{x:ab} There is an equivariant weak equivalence from Groupoid A of \cref{x:a1} to Groupoid B of \cref{x:b1} defined by the functor $\varphi\colon \ZZ/2\ltimes\SS^1\to\U(1)\ltimes\U(1)\times_{\ZZ/2}\SS^1$ sending $(\pm 1,x)$ to $(\pm 1,[1,x])$.  The corresponding bibundle $P_\varphi$ to this functor is $K\ltimes Z$ where $K=\U(1)\times\ZZ/2$ and $Z$ is the torus $\U(1)\times\SS^1$, on which $K$ acts by $$(e^{i\theta},\pm 1)\cdot(e^{i\alpha},e^{i\beta}):=(\pm e^{i(\alpha+\theta)},e^{\pm i\beta}).\eqno\qedhere$$
 This is a particular example of one of the two fundamental types of equivalences in \cite{PS:translation}.
    \end{example}

    \begin{example} \labell{x:ac}
        There is an equivariant weak equivalence from Groupoid C of \cref{x:c1} to Groupoid A.   The map $z\mapsto iz^2$ from $\SS^1$ to itself induces an equivariant weak equivalence $\psi$ from Groupoid C, $D_4\ltimes\SS^1$, to Groupoid A, $\ZZ/2\ltimes\SS^1$; the corresponding group homomorphism is the quotient homomorphism $\widetilde{\psi}\colon D_4\to D_4/\langle (-1,-1)\rangle\cong\ZZ/2$.  Note that while $\ZZ/2\times 1$ and $1\times\ZZ/2$ are isomorphic as groups, they are not even conjugate as subgroups of $D_4$; however, they are both sent by $\widetilde{\psi}$ to the stabilisers of $N$ and $S$ in $\ZZ/2\ltimes\SS^1$.  This example is \cite[Example 5.3]{PS:translation}, and is a simple example of how two Morita equivalent orbifolds can have very distinct diagrams of fixed point sets, and hence, non-isomorphic (ordinary) Bredon-Illman cohomology.    We will revisit this example later. 
    \end{example}

Thus the action groupoids of Examples A, B, and C are all Morita equivalent and will all be isomorphic in our localised category $\ActGpd[\calW^{-1}_{\eq}]$.  

    \begin{example}\labell{x:d2}  The action groupoids of Groupoid D,  \cref{x:d1} are NOT Morita equivalent for distinct $n$. 
       The action of  $\SO(n)$ on $\SS^n$ by rotating $\SS^n$ about the axis through its north and south poles has   orbit spaces which are all homeomorphic (in fact, diffeomorphic\footnote{Here, we mean diffeomorphic when equipped with the quotient Sikorski differential structure \cite{sniatycki}, which makes them diffeomorphic to the manifold-with-boundary $[-\pi,\pi]$.  One could instead equip these with quotient diffeologies \cite{iglesias}, in which case $\SS^n/\SO(n)$ is diffeomorphic to $\SS^m/\SO(m)$ if and only if $n=m$.  In this case, for a more interesting example, we could compare $\SO(2n)\ltimes\SS^{2n}$ with $\U(n)\ltimes\SS^{2n}$, again rotating about the axis through the north and south poles.  Since the orbits of both actions are the same, the diffeology cannot distinguish between the two orbit spaces, even though the two groupoids are not Morita equivalent (the stabilisers again are different).}) to $[-\pi,\pi]$.  However, the groupoids are not Morita equivalent: the stabiliser at any point not equal to a pole is isomorphic to $\SO(n-1)$, whereas that at one of the poles is $\SO(n)$ itself.  Since weak equivalences preserve stabilisers and orbit spaces (see, for instance, \cite[Theorem 4.3.1]{dH:orbispaces}), it is immediate that $\SO(n)\ltimes\SS^n$ is Morita equivalent to $\SO(m)\ltimes\SS^m$ if and only if $n=m$.
    \end{example}

\section{Discrete tom Dieck Fundamental Groupoid}\labell{s:fund gpd}

In order to achieve our goal and obtain a Morita-invariant twisted Bredon-Illman cohomology theory, we need to consider a twisted version of the theory.  The ordinary Bredon-Illman cohomology theory of a $G$-space $X$ is built out of fixed set data which is organised and indexed by the orbit category $\mathcal{O}_G$.  The twisted version replaces this orbit category with a fundamental groupoid category.   This category was first defined by tom Dieck \cite[Section 10.9]{tomDieck}, but can also be interpreted as a Grothendieck category of the fundamental groupoid functor over the orbit category.  In this way, it becomes a generalisation of the orbit category.   This section is devoted to defining this category and developing its properties; in particular, we show that it is a Morita invariant of an action groupoid, generalising the result for representable orbifolds of Pronk and Scull \cite{PS:fund-gpd}.   It will be used as a foundation for the definition of the Morita-invariant twisted Bredon-Illman cohomology of our main results. 

Instead of defining the full tom Dieck fundamental groupoid, and then the discrete version as quotient of this (as done in \cite{tomDieck,PS:fund-gpd}), we follow \cite{MuMu} and define it directly.

    \begin{definition}[Discrete tom Dieck Fundamental Groupoid]\labell{d:fund-gpd}
        Fix a Lie group $G$ and a left $G$-space $X$.  The \textbf{discrete tom Dieck fundamental groupoid of $G\ltimes X$}, denoted $\Pi(G\ltimes X)$ and hereafter referred to as just the \textbf{fundamental groupoid}, is the following \emph{category}.
        \begin{enumerate}
            \setcounter{enumi}{-1}
            
            \item \textbf{Objects:} Given $H\leq G$ a closed subgroup, an object is a smooth $G$-map $x_H\colon G/H\to X$.  We will denote the point $x_H(H)$ by $x$. Note that $x$ completely determines the map $x_H$ and lies in $X^H$.
            
            \item \textbf{Arrows:} Given closed subgroups $H,K\leq G$ and objects $x_H,y_K$, let $(\widebar{g},p)$ be a pair consisting of a $G$-map $$\widebar{g}\colon G/H\to G/K\colon aH\mapsto agK$$ (the existence of which is equivalent to $g^{-1}Hg\leq K$) and a continuous path $p\colon[0,1]\to X^H$ from $x$ to $gy$.  The path $p$ determines a $G$-homotopy between $G$-maps, $G/H\times[0,1]\to X^H$ from $x_H$ to $y_K\circ\widebar{g}$.  Define an equivalence relation on these pairs as follows: $(\widebar{g}_0,p_0)\colon x_H\to y_K$ is equivalent to $(\widebar{g}_1,p_1)\colon x_H\to y_K$ if there exists a $G$-homotopy  $\Psi\colon G/K_0\times[0,1]\to G/K_1$  defined by $\Psi(aH,t)=ag_tK$ from $\widebar{g}_0$ to $\widebar{g}_1$, and $\Phi\colon[0,1]^2\to X$ an homotopy $\rel~\{0\}$ from $p_0$ to $p_1$ whose value at $(1,t)$ is equal to $y_K\circ\Psi(H,t)$.  Denote the corresponding equivalence class by $[\widebar{g}_0,p_0]$.  These classes are the arrows of $\Pi(G\ltimes X)$.

            \begin{center}
            \begin{tikzpicture}[thick]
                \coordinate (c);
                \draw[black] (c) ellipse (2.5 and 1.5);
                \node(X)[left of=c,xshift=-2cm]{$X$};
                \node(x)[left of=c,yshift=-0.5cm]{};
                \node(y)[right of=c,yshift=-0.5cm]{};
                \node(gy)[above of=y]{};
                \node(p)[above of=x,xshift=0.8cm,yshift=-0.25cm]{$\scriptstyle{p}$};
                \path[draw=black] ([xshift=-2mm]x.east) -- node{\firstmidarrow} ([xshift=1mm]gy.west);
                \filldraw[black] (x) circle (2pt) node[below] {$x$};
                \filldraw[black] (y) circle (2pt) node[below] {$y$};
                \filldraw[black] (gy) circle (2pt) node[below,xshift=1mm] {$gy$};
            \end{tikzpicture}
            \end{center}
            
            \item \textbf{Units:} Given an object $x_H$, the unit is the arrow $\left[\widebar{1_G},x\right]$, where $x$ stands for the constant path at $x$.
            
            \item \textbf{Composition:} Given arrows $[\widebar{g}_0,p_0]\colon x_H\to y_K$ and $[\widebar{g}_1,p_1]\colon y_K \to z_L$, define their composition as $[\widebar{g_0g_1},p_0*(g_0p_1)]$, where we move the path $p_1$ by $g_0$ so that it can be concatenated with $p_0$ (see \cref{f:composition}).\qedhere
            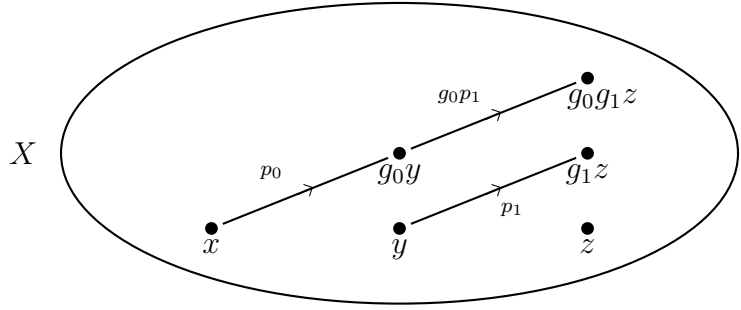
\begin{figure}
            \centering
            \begin{tikzpicture}[thick]
                \coordinate (c);
                \draw[black] (c) ellipse (4.5 and 2.0);
                \node(X)[left of=c,xshift=-4cm]{$X$};
                \node(y)[yshift=-1cm]{};
                \node(g0y)[above of=y]{};
                \node(x)[left of=c,xshift=-1.5cm,yshift=-1cm]{};
                \node(z)[right of=c,xshift=1.5cm,yshift=-1cm]{};
                \node(g1z)[above of=z]{};
                \node(g0g1z)[above of=g1z]{};
                \node(p0)[above of=x,xshift=0.8cm,yshift=-0.25cm]{$\scriptstyle{p_0}$};
                \node(p1)[right of=y,xshift=0.5cm,yshift=0.25cm]{$\scriptstyle{p_1}$};
                \node(g0p1)[above of=g0y,xshift=0.8cm,yshift=-0.25cm]{$\scriptstyle{g_0p_1}$};
                \path[draw=black] (x) -- node{\firstmidarrow}(g0y);
                \path[draw=black] (y) -- node{\firstmidarrow}(g1z);
                \path[draw=black] (g0y) -- node{\firstmidarrow}(g0g1z);
                \filldraw[black] (x) circle (2pt) node[below] {$x$};
                \filldraw[black] (y) circle (2pt) node[below] {$y$};
                \filldraw[black] (z) circle (2pt) node[below] {$z$};
                \filldraw[black] (g0y) circle (2pt) node[below] {$g_0y$};
                \filldraw[black] (g1z) circle (2pt) node[below] {$g_1z$};
                \filldraw[black] (g0g1z) circle (2pt) node[below,xshift=2mm] {$g_0g_1z$};
            \end{tikzpicture}
            \caption{The composition of $[\widebar{g}_0,p_0]$ with $[\widebar{g}_1,p_1]$.}\labell{f:composition}
            \end{figure}
        \end{enumerate}
    \end{definition}

   Given an equivariant functor $\varphi\colon G\ltimes X\to H\ltimes Y$, the homomorphism $\widetilde{\varphi}\colon G\to H$ descends to a $\widetilde{\varphi}$-equivariant map $\widetilde{\varphi}_K\colon G/K\to H/\widetilde{\varphi}(K)$ for any subgroup $K\leq G$.    This can be extended to a functor of fundamental groupoids.

    \begin{proposition}[Functoriality of $\Pi$, {\cite[Proposition 4.6]{PS:fund-gpd}}]\labell{p:pi functors} 
        Given an equivariant functor $\varphi\colon G\ltimes X\to H\ltimes Y$, there is a functor $\Pi\varphi\colon\Pi(G\ltimes X)\to\Pi(H\ltimes Y)$ sending objects $x_K$ to $\varphi(x)_{\widetilde{\varphi}(K)}$ and arrows $[\widebar{g},p]$ to $\left[\widebar{\widetilde{\varphi}(g)},\varphi\circ p\right]$.
    \end{proposition}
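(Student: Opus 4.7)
The plan is to verify that the proposed assignments on objects and arrows are well-defined, respect the equivalence relation, and satisfy the functor axioms. All steps amount to pushing the given $G$-equivariant data through $\varphi$ and $\widetilde{\varphi}$; the central tool is $\widetilde{\varphi}$-equivariance, i.e.\ $\varphi(g\cdot x)=\widetilde{\varphi}(g)\cdot\varphi(x)$.

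First I would check that $\varphi(x)_{\widetilde{\varphi}(K)}$ is an object of $\Pi(H\ltimes Y)$. Since $x_K$ is determined by $x\in X^K$, for any $k\in K$ equivariance gives $\widetilde{\varphi}(k)\cdot\varphi(x)=\varphi(k\cdot x)=\varphi(x)$, so $\varphi(x)\in Y^{\widetilde{\varphi}(K)}$ and the $H$-map $\varphi(x)_{\widetilde{\varphi}(K)}\colon H/\widetilde{\varphi}(K)\to Y$ is well-defined. For an arrow $[\widebar{g},p]\colon x_K\to y_L$ I would then verify the two ingredients of $[\widebar{\widetilde{\varphi}(g)},\varphi\circ p]$: (i) the condition $g^{-1}Kg\leq L$ gives $\widetilde{\varphi}(g)^{-1}\widetilde{\varphi}(K)\widetilde{\varphi}(g)\leq\widetilde{\varphi}(L)$ upon applying $\widetilde{\varphi}$, so $\widebar{\widetilde{\varphi}(g)}\colon H/\widetilde{\varphi}(K)\to H/\widetilde{\varphi}(L)$ exists; (ii) $\varphi\circ p$ is a continuous path from $\varphi(x)$ to $\varphi(gy)=\widetilde{\varphi}(g)\cdot\varphi(y)$, and for each $t$ and each $k\in K$, $\widetilde{\varphi}(k)\cdot\varphi(p(t))=\varphi(k\cdot p(t))=\varphi(p(t))$ since $p(t)\in X^K$, so $\varphi\circ p$ lies in $Y^{\widetilde{\varphi}(K)}$.

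Next I would check that the assignment descends to equivalence classes. If $(\widebar{g}_0,p_0)\sim(\widebar{g}_1,p_1)$ via a $G$-homotopy $\Psi(aK,t)=ag_tL$ and a homotopy $\Phi\colon[0,1]^2\to X$ rel $\{0\}$ whose value at $(1,t)$ equals $y_L\circ\Psi(K,t)=g_t y$, then $(aK,t)\mapsto a\widetilde{\varphi}(g_t)\widetilde{\varphi}(L)$ is a $H$-homotopy from $\widebar{\widetilde{\varphi}(g_0)}$ to $\widebar{\widetilde{\varphi}(g_1)}$, and $\varphi\circ\Phi$ is a homotopy rel $\{0\}$ from $\varphi\circ p_0$ to $\varphi\circ p_1$ with value at $(1,t)$ equal to $\varphi(g_t y)=\widetilde{\varphi}(g_t)\cdot\varphi(y)$, as required.

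Finally I would verify the functor axioms. Units are preserved since $\widetilde{\varphi}(1_G)=1_H$ and $\varphi$ applied to the constant path at $x$ is the constant path at $\varphi(x)$. For composition, given $[\widebar{g}_0,p_0]\colon x_K\to y_L$ and $[\widebar{g}_1,p_1]\colon y_L\to z_M$, on the one side we obtain $[\widebar{\widetilde{\varphi}(g_0 g_1)},\varphi\circ(p_0*(g_0 p_1))]$, and on the other $[\widebar{\widetilde{\varphi}(g_0)\widetilde{\varphi}(g_1)},(\varphi\circ p_0)*(\widetilde{\varphi}(g_0)(\varphi\circ p_1))]$; these agree because $\widetilde{\varphi}$ is a group homomorphism, $\varphi$ commutes with concatenation of paths, and $\varphi(g_0\cdot p_1(t))=\widetilde{\varphi}(g_0)\cdot\varphi(p_1(t))$ by equivariance. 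No serious obstacle is expected; the only point that requires a little care is tracking that the $K$-fixed condition on paths transports correctly to a $\widetilde{\varphi}(K)$-fixed condition, which is exactly the equivariance identity above.
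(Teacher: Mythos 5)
Your proof is correct and follows essentially the same route as the paper: both verify well-definedness by transporting the witnessing pair $(\Psi,\Phi)$ through $\widetilde{\varphi}$ and $\varphi$ to produce the corresponding homotopies for the image arrows, with the object-level, unit, and composition checks being direct equivariance computations (the paper defers those last checks to its reference, so your write-up is if anything slightly more complete). The only cosmetic point is that your induced homotopy should be written on $H/\widetilde{\varphi}(K)\times[0,1]$, i.e.\ $(a\widetilde{\varphi}(K),t)\mapsto a\widetilde{\varphi}(g_t)\widetilde{\varphi}(L)$ with $a\in H$, which is exactly the paper's $\Psi'$.
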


    \begin{proof}
        To check that $\Pi(\varphi)$ is well-defined on arrows, suppose $[\widebar{g}_0,p_0]=[\widebar{g}_1,p_1]\colon (x_0)_{K_0}\to(x_1)_{K_1}$.  Then there exists $\Psi\colon G/K_0\times[0,1]\to G/K_1$  a $G$-homotopy from $\widebar{g_0}$ to $\widebar{g_1}$, and $\Phi$ an homotopy $\rel\,\{0\}$ from $p_0$ to $p_1$ whose value at $(1,t)$ is $(x_1)_{K_1}\circ\Psi(K_0,t)$.  Define an homotopy $\Psi'\colon H/\widetilde{\varphi}(K_0)\times[0,1]\to H/\widetilde{\varphi}(K_1)$ by $$\Psi'(h\widetilde{\varphi}(K_0),t):=h\widetilde{\varphi}_{K_1}(\Psi(K_0,t)).$$  Then for any $(gK_0,t)\in G/K_0\times[0,1]$, $$\varphi\circ(x_1)_{K_1}\circ\Psi(gK_0,t)=\widetilde{\varphi}(gg_t)\varphi(x_1)=\varphi(x_1)_{\widetilde{\varphi}(K_1)}\circ\Psi'\circ(\widetilde{\varphi}_{K_0}\times\id_{[0,1]})(gK_0,t).$$ Moreover, $\varphi\circ\Phi$ is an homotopy $\rel\,\{0\}$ from $\varphi\circ p_0$ to $\varphi\circ p_1$ whose value at $(1,t)$ is $\varphi(x_1)_{\widetilde{\varphi}(K_1)}\circ\Psi'(\widetilde{\varphi}_{K_0}(K_0),t)$.  The existence of $\Psi'$ and $\varphi \circ \Phi$ show  that  $\Pi(\varphi)[\widebar{g}_1, p_1] = \Pi(\varphi)[\widebar{g}_2, p_2]$ and hence  $\Pi(\varphi)$ is well-defined on arrows.

        \begin{center}
        \begin{tikzpicture}[thick]
            \coordinate (c);
            \draw[black] ([xshift=-4cm]c) ellipse (2.25 and 3.0);
            \node(X)[left of=c,xshift=-5.8cm]{$X$};
            \node(x0)[left of=c,xshift=-4cm,yshift=-0.5cm]{};
            \node(x1)[right of=c,xshift=-4cm,yshift=-0.5cm]{};
            \node(g0x1)[above of=x1]{};
            \node(g1x1)[above of=g0x1]{};
            \node(p0)[above of=x0,xshift=1cm,yshift=-0.75cm]{$\scriptstyle{p_0}$};
            \node(p1)[above of=x0,xshift=0.9cm,yshift=0.4cm]{$\scriptstyle{p_1}$};
            \fill[gray!30] ([xshift=0.5mm,yshift=0.5mm]x0.center) to ([xshift=-0.5mm,yshift=0.5mm]g0x1.center) -- ([xshift=-0.5mm,yshift=-0.5mm]g1x1.center);
            \path[draw=black] ([xshift=-2mm]x0.east) -- node{\firstmidarrow} ([xshift=1mm]g0x1.west);
            \path[draw=black] ([xshift=-2mm]x0.east) -- node{\secondmidarrow} ([xshift=1mm]g1x1.west);
            \path[draw=black] (g0x1) -- (g1x1);
            \filldraw[black] (x0) circle (2pt) node[below] {$x_0$};
            \filldraw[black] (x1) circle (2pt) node[below] {$x_1$};
            \filldraw[black] (g0x1) circle (2pt) node[below,xshift=1mm] {$g_0x_1$};
            \filldraw[black] (g1x1) circle (2pt) node[above,xshift=1mm] {$~g_1x_1$};

            \node(phisrc)[left of=c,xshift=-0.65cm]{};
            \node(phitrg)[right of=c,xshift=-1cm]{};
            \draw[->] (phisrc) -- (phitrg);
            \node(phi)[left of=c,xshift=0.2cm,yshift=0.25cm]{$\varphi$};
            
            \draw[black] ([xshift=3cm]c) ellipse (3.0 and 3.0);
            \node(Y)[right of=c,xshift=5.5cm]{$Y$};
            \node(phix0)[left of=c,xshift=2.5cm,yshift=-0.5cm]{};
            \node(phix1)[right of=c,xshift=2.5cm,yshift=-0.5cm]{};
            \node(phig0x1)[above of=phix1]{};
            \node(phig1x1)[above of=phig0x1]{};
            \node(phip0)[above of=phix0,xshift=1cm,yshift=-0.85cm]{$\scriptstyle{\varphi\circ p_0}$};
            \node(phip1)[above of=phix0,xshift=1cm,yshift=0.5cm]{$\scriptstyle{\varphi\circ p_1}$};
            \fill[gray!30] ([xshift=0.5mm,yshift=0.5mm]phix0.center) to ([xshift=-0.5mm,yshift=0.5mm]phig0x1.center) -- ([xshift=-0.5mm,yshift=-0.5mm]phig1x1.center);
            \path[draw=black] ([xshift=-2mm]phix0.east) -- node{\firstmidarrow} ([xshift=1mm]phig0x1.west);
            \path[draw=black] ([xshift=-2mm]phix0.east) -- node{\secondmidarrow} ([xshift=1mm]phig1x1.west);
            \path[draw=black] (phig0x1) -- (phig1x1);
            \filldraw[black] (phix0) circle (2pt) node[below] {$\varphi(x_0)$};
            \filldraw[black] (phix1) circle (2pt) node[below] {$\varphi(x_1)$};
            \filldraw[black] (phig0x1) circle (2pt) node[below,xshift=1mm] {$\quad\varphi(g_0x_1)$};
            \filldraw[black] (phig1x1) circle (2pt) node[above,xshift=1mm] {$\quad\quad\varphi(g_1x_1)$};            
        \end{tikzpicture}
        \end{center}

        Units are sent to units by $\Pi\varphi$, and unravelling definitions yields that $\Pi\varphi$ respects composition of arrows; see \cite[Proposition 4.6]{PS:fund-gpd} for details. 
    \end{proof}

    \begin{corollary}\labell{c:pi functors}
        $\Pi$ is a covariant functor from $\ActGpd$ to $\Cat$, the category of small categories. 
    \end{corollary}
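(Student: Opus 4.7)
The plan is to verify that the assignment $\Pi$ described in \cref{p:pi functors} respects identities and composition of equivariant functors; since the action on objects (action groupoids $G\ltimes X$) and on $1$-morphisms (equivariant functors $\varphi$) is already specified there, this is all that remains.

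For the identity axiom, I would take $\id\colon G\ltimes X \to G\ltimes X$, which is equivariant with $\widetilde{\id}=\id_G$. Unwinding the formulas in \cref{p:pi functors}, $\Pi(\id)$ sends an object $x_K$ to $\id(x)_{\id_G(K)}=x_K$, and an arrow $[\widebar{g},p]$ to $[\overline{\id_G(g)},\id\circ p]=[\widebar{g},p]$. Hence $\Pi(\id_{G\ltimes X})=\id_{\Pi(G\ltimes X)}$.

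For the composition axiom, I would take equivariant functors $\varphi\colon G\ltimes X\to H\ltimes Y$ and $\psi\colon H\ltimes Y\to K\ltimes Z$, noting that $\psi\circ\varphi$ is equivariant with group homomorphism $\widetilde{\psi\circ\varphi}=\widetilde{\psi}\circ\widetilde{\varphi}$. On objects, both $\Pi(\psi\circ\varphi)(x_L)$ and $(\Pi\psi\circ\Pi\varphi)(x_L)$ equal $\psi(\varphi(x))_{\widetilde{\psi}(\widetilde{\varphi}(L))}$. On arrows, $\Pi(\psi\circ\varphi)[\widebar{g},p]=[\overline{\widetilde{\psi}(\widetilde{\varphi}(g))},(\psi\circ\varphi)\circ p]$, which coincides with $\Pi(\psi)\bigl([\overline{\widetilde{\varphi}(g)},\varphi\circ p]\bigr)=\Pi(\psi)\bigl(\Pi(\varphi)[\widebar{g},p]\bigr)$ by direct substitution.

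There is no real obstacle here: the corollary is purely a bookkeeping consequence of \cref{p:pi functors}, and the only things to double-check are the strict equalities $\widetilde{\id_{G\ltimes X}}=\id_G$ and $\widetilde{\psi\circ\varphi}=\widetilde{\psi}\circ\widetilde{\varphi}$, together with the compatibility of the equivalence relation defining arrows of $\Pi$ with pre- and post-composition by equivariant maps (already implicit in the well-definedness argument of \cref{p:pi functors}). Since $\Cat$ is treated here as a $1$-category of small categories, no additional check on $2$-cells is required.
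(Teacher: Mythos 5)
Your proposal is correct and matches the paper's own argument: both verify functoriality by checking that the identity equivariant functor (with $\widetilde{\id}=\id_G$) is sent to the identity of $\Pi(G\ltimes X)$ and that $\Pi(\psi\circ\varphi)=\Pi\psi\circ\Pi\varphi$ on objects and arrows by direct substitution, with well-definedness already supplied by \cref{p:pi functors}. No issues.
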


    \begin{proof}
        The identity functor on an action groupoid $G \ltimes X$ is an equivariant functor, with associated group homomorphism $\id_G$.  It follows that $\Pi$ sends the identity functor to the identity functor of $\Pi(G\ltimes X)$.

        Given equivariant functors $\varphi\colon G\ltimes X\to H\ltimes Y$ and $\psi\colon H\ltimes Y\to K\ltimes Z$, for any $x\in X$ and subgroup $L\leq G$,
        $$\psi\circ\varphi(x)_{\widetilde{\psi}\circ\widetilde{\varphi}(L)}=\psi(\varphi(x))_{\widetilde{\psi}(\widetilde{\varphi}(L))}$$ and for any arrow $[\widebar{g},p]$ of $\Pi(G\ltimes X)$,
        $$\left(\overline{\widetilde{\psi}\circ\widetilde{\varphi}(g)},(\psi\circ\varphi)\circ p\right)=\left(\overline{\widetilde{\psi}(\widetilde{\varphi}(g))},\psi\circ(\varphi\circ p)\right).$$
        The result follows.        
    \end{proof}
   
   $\Pi$ acts on natural transformations as well, and in fact defines a pseudofunctor from $\ActGpd$ to $2$-$\Cat$; see \cite[Proposition 4.7, Theorem 4.8]{PS:fund-gpd}.  Note that in \cite{PS:fund-gpd}, the authors consider the non-discrete tom Dieck fundamental groupoid, of which the discrete version is a quotient on the level of arrows, similar to how the homotopy category is a quotient on the level of continuous maps in the category of topological spaces.  However, passing to the discrete version is functorial and yields:

    \begin{theorem}[Pseudofunctoriality of $\Pi$, {\cite[Theorem 4.8]{PS:fund-gpd}}]\labell{t:pi nat transf}
        Given equivariant functors $\varphi,\psi\colon G\ltimes X\to H\ltimes Y$, and a smooth natural transformation $S\colon \varphi\Rightarrow\psi$, there is a pseudo-natural transformation $\Pi S\colon\Pi\varphi\Rightarrow\Pi\psi$ sending an object $x_K\colon G/K\to X$ to the arrow
        $$\xymatrix{
            \varphi(x)_{\widetilde{\varphi}(K)} \ar[rrr]^{\left[\widetilde{S}(x)^{-1},\,x\right]} &&& \psi(x)_{\widetilde{\psi}(K)} \\
        }$$ where $\widetilde{S}\colon X\to H$ is a smooth map such that $S(x)=\left(\widetilde{S}(x),\varphi(x)\right)$. In fact, $\Pi$ is a pseudofunctor from $\ActGpd$ to bicategories.
    \end{theorem}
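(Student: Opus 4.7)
The plan is to first verify that the prescribed formula produces a well-defined arrow of $\Pi(H\ltimes Y)$, and then to establish pseudonaturality by checking the square on an arbitrary arrow $[\widebar{g},p]\colon x_K\to y_L$ of $\Pi(G\ltimes X)$. For well-definedness, the $H$-map $\widebar{\widetilde{S}(x)^{-1}}\colon H/\widetilde{\varphi}(K)\to H/\widetilde{\psi}(K)$ makes sense precisely when $\widetilde{S}(x)\widetilde{\varphi}(K)\widetilde{S}(x)^{-1}\leq\widetilde{\psi}(K)$, which follows from naturality of $S$ applied to the arrow $(k,x)$ for $k\in K$: since $kx=x$ one gets $\widetilde{\psi}(k)\widetilde{S}(x)=\widetilde{S}(x)\widetilde{\varphi}(k)$. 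The constant path at $\varphi(x)$ lies in $Y^{\widetilde{\varphi}(K)}$ because $\widetilde{\varphi}(K)\varphi(x)=\varphi(Kx)=\{\varphi(x)\}$, and it connects $\varphi(x)$ to $\widetilde{S}(x)^{-1}\psi(x)=\varphi(x)$, as required.

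For the pseudonaturality square I would compute both pastings: $\Pi S(y_L)\circ\Pi\varphi[\widebar{g},p]=\bigl[\widebar{\widetilde{\varphi}(g)\widetilde{S}(y)^{-1}},\,\varphi\circ p\bigr]$ on the one hand, and $\Pi\psi[\widebar{g},p]\circ\Pi S(x_K)=\bigl[\widebar{\widetilde{S}(x)^{-1}\widetilde{\psi}(g)},\,\widetilde{S}(x)^{-1}\cdot(\psi\circ p)\bigr]$ on the other. To identify these as equal in $\Pi(H\ltimes Y)$, I would produce the $G$-homotopy $\Psi(a\widetilde{\varphi}(K),t):=a\,\widetilde{S}(p(t))^{-1}\widetilde{\psi}(g)\,\widetilde{\psi}(L)$ of $H$-maps together with the path-homotopy $\Phi(s,t):=\widetilde{S}(p(st))^{-1}\psi(p(s))$. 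Well-definedness of $\Psi$ at each $t$ uses $p(t)\in X^K$, the pointwise naturality identity $\widetilde{\varphi}(k)\widetilde{S}(p(t))^{-1}=\widetilde{S}(p(t))^{-1}\widetilde{\psi}(k)$, and the containment $\widetilde{\psi}(g^{-1}Kg)\leq\widetilde{\psi}(L)$; at $t=1$ the identity $\widetilde{S}(gy)^{-1}\widetilde{\psi}(g)=\widetilde{\varphi}(g)\widetilde{S}(y)^{-1}$ obtained from naturality at $(g,y)$ matches $\Psi(\,\cdot\,,1)$ with the first composition's $H$-map. The homotopy $\Phi$ then satisfies $\Phi(s,0)=\widetilde{S}(x)^{-1}\psi(p(s))$, $\Phi(s,1)=\widetilde{S}(p(s))^{-1}\psi(p(s))=\varphi(p(s))$, $\Phi(0,t)=\varphi(x)$ (so it is rel\ $\{0\}$), and $\Phi(1,t)=\widetilde{S}(p(t))^{-1}\psi(gy)$, matching the endpoint prescribed by $\Psi$; its image lies in $Y^{\widetilde{\varphi}(K)}$ by the same pointwise naturality applied at $p(st)$.

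Finally, promoting this construction to a pseudofunctor $\Pi\colon\ActGpd\to\Bicat$ requires checking compatibility with identity natural transformations, and with vertical and horizontal composition of natural transformations, up to coherent invertible modifications. These are routine unwindings of the analogous statements for the non-discrete fundamental groupoid established in \cite[Theorem 4.8]{PS:fund-gpd}, and the passage to the discrete quotient on arrows carries the coherence data through verbatim. The main obstacle in the core verification is the construction of $\Phi$: it must interpolate between $\widetilde{S}(x)^{-1}\cdot(\psi\circ p)$ and $\varphi\circ p$ while simultaneously matching the $t$-varying endpoint prescribed by $\Psi(\widetilde{\varphi}(K),t)$ and remaining in $Y^{\widetilde{\varphi}(K)}$; the choice $(s,t)\mapsto\widetilde{S}(p(st))^{-1}\psi(p(s))$ packages all three constraints by rescaling the $\widetilde{S}$-argument along the path $p$.
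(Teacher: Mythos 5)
Your verification is correct, and it is worth noting that the paper itself does not prove this statement: it imports it from \cite[Theorem 4.8]{PS:fund-gpd} together with the remark that passing from the non-discrete to the discrete fundamental groupoid is functorial. So your argument is the more self-contained route, and its core computations check out: the component $\left[\overline{\widetilde{S}(x)^{-1}},x\right]$ is a legitimate arrow because naturality at $(k,x)$ with $kx=x$ gives $\widetilde{S}(x)\widetilde{\varphi}(k)\widetilde{S}(x)^{-1}=\widetilde{\psi}(k)$, and the constant path at $\varphi(x)$ lands in $Y^{\widetilde{\varphi}(K)}$ and ends at $\widetilde{S}(x)^{-1}\psi(x)=\varphi(x)$; and your pair $\Psi(a\widetilde{\varphi}(K),t)=a\widetilde{S}(p(t))^{-1}\widetilde{\psi}(g)\widetilde{\psi}(L)$, $\Phi(s,t)=\widetilde{S}(p(st))^{-1}\psi(p(s))$ satisfies exactly the equivalence-relation conditions of \cref{d:fund-gpd} (endpoints $h_0=\widetilde{S}(x)^{-1}\widetilde{\psi}(g)$, $h_1=\widetilde{S}(gy)^{-1}\widetilde{\psi}(g)=\widetilde{\varphi}(g)\widetilde{S}(y)^{-1}$, $\Phi$ rel $\{0\}$ with $\Phi(1,t)=h_t\cdot\psi(y)$, image fixed by $\widetilde{\varphi}(K)$), so the two pastings coincide. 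Two cosmetic points: your displayed composites silently discard the constant-path factors produced by the composition formula of \cref{d:fund-gpd} (harmless, since concatenating with a constant path at the matching endpoint gives an equivalent arrow, but it deserves a sentence); and since the discrete fundamental groupoid is a $1$-category, what you have actually shown is that the naturality squares commute strictly, so $\Pi S$ is an honest natural transformation here — the ``pseudo'' is only relevant for the non-discrete version and for the coherence data making $\Pi$ a pseudofunctor, which you, like the paper, defer to \cite{PS:fund-gpd}; that deferral is consistent with the paper's own treatment.
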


    The following is an extension of \cite[Theorem 5.1]{PS:fund-gpd} from orbifold groupoids to action groupoids of compact Lie group actions.  In fact, using bibundles allows us to prove the extension almost directly from the results in \cite{PS:fund-gpd}.

    \begin{theorem}[Morita Invariance of the Fundamental Groupoid]\labell{t:fund gpd morita}
        Given Morita equivalent action groupoids $G\ltimes X$ and $H\ltimes Y$, the fundamental groupoids $\Pi(G\ltimes X)$ and $\Pi(H\ltimes Y)$ are equivalent as categories.
    \end{theorem}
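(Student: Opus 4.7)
The plan is to convert the Morita equivalence into a span of equivariant weak equivalences via bibundles, apply $\Pi$, and reduce to showing that $\Pi$ takes equivariant weak equivalences to equivalences of categories. Concretely, by \cref{c:bibundle form}, the Morita equivalence is witnessed by a biprincipal bibundle $G\ltimes X\ot{\lambda} Z\uto{\rho} H\ltimes Y$; following \cref{r:bibundle} I treat the $H$-action on $Z$ as a left action, so $G\times H$ acts on $Z$. Form the action groupoid $(G\times H)\ltimes Z$; the anchors $\lambda,\rho$ then promote to equivariant smooth functors
\[ \Lambda\colon(G\times H)\ltimes Z\to G\ltimes X \qquad\text{and}\qquad P\colon(G\times H)\ltimes Z\to H\ltimes Y, \]
with associated group homomorphisms $\pr_1$ and $\pr_2$ respectively. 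That $\lambda\colon Z\to X$ is a $G$-equivariant principal $H$-bundle (respectively that $\rho\colon Z\to Y$ is an $H$-equivariant principal $G$-bundle) translates, via a direct check against \cref{d:ess equiv}, into $\Lambda$ (respectively $P$) being smoothly essentially surjective and smoothly fully faithful, hence an equivariant weak equivalence. Applying $\Pi$ (\cref{c:pi functors}) then produces a span $\Pi(G\ltimes X)\xleftarrow{\Pi\Lambda}\Pi((G\times H)\ltimes Z)\xrightarrow{\Pi P}\Pi(H\ltimes Y)$; composing $\Pi P$ with a quasi-inverse to $\Pi\Lambda$ will yield the desired equivalence of categories, \emph{provided} both $\Pi\Lambda$ and $\Pi P$ are equivalences.

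The problem thus reduces to showing that $\Pi\varphi$ is an equivalence of categories whenever $\varphi\colon G\ltimes X\to H\ltimes Y$ is an equivariant weak equivalence. This is \cite[Theorem 5.1]{PS:fund-gpd} in the finite-isotropy setting, and I would adapt its proof. Essential surjectivity at $y_K$ uses smooth essential surjectivity of $\varphi$ to produce $x\in X$ and $h\in H$ with $\varphi(x)=h\cdot y$; smooth full faithfulness then forces $\widetilde{\varphi}$ to restrict to an isomorphism between the isotropy subgroup of $x$ and that of $\varphi(x)$, and a constant path together with the resulting group-theoretic arrow exhibits an isomorphism $\Pi\varphi(x_H)\to y_K$ in $\Pi(H\ltimes Y)$. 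Faithfulness and fullness come down to lifting path data $p\colon[0,1]\to Y^K$ and $G$-homotopies of maps $G/H\to G/K$ through $\varphi$, which use smooth full faithfulness both at the group-theoretic level and at the level of the fixed-set restrictions $\varphi\colon X^H\to Y^{\widetilde{\varphi}(H)}$.

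The main obstacle is that the Pronk--Scull argument is presented for orbifold groupoids, where stabilisers are finite and slice models are elementary; in our compact-Lie setting the stabilisers may be positive-dimensional. The key extra input is Bierstone's local triviality \cref{t:bierstone} applied to the equivariant principal bundle $\lambda$ (and its counterpart for $\rho$), which furnishes $\Stab(x)$-equivariant local trivialisations and thereby supplies the missing ingredient for equivariant path- and homotopy-lifting through $\varphi$. With this replacement, the PS arguments go through with only cosmetic modifications, completing the proof.
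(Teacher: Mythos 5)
Your first half is exactly the paper's strategy: pass to the biprincipal bibundle, form $(G\times H)\ltimes Z$ with the two equivariant functors induced by $\lambda$ and $\rho$, and win by showing each leg induces an equivalence of fundamental groupoids. The gap is in how you handle the legs. You reduce to the statement that $\Pi\varphi$ is an equivalence for \emph{every} equivariant weak equivalence $\varphi$, to be proved by adapting \cite[Theorem 5.1]{PS:fund-gpd} with \cref{t:bierstone} ``applied to $\lambda$ and $\rho$'' as the compact-Lie input. But Bierstone's theorem supplies local trivialisations, hence path- and homotopy-lifting, only \emph{along the bundle projections} $\lambda$ and $\rho$; it gives you nothing about lifting a path $p\colon[0,1]\to Y^{\widetilde{\varphi}(L)}$ through an arbitrary weak equivalence $\varphi$, and such literal lifting is in general impossible because $\varphi(X^{L})$ need not exhaust $Y^{\widetilde{\varphi}(L)}$. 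The paper's own example makes this concrete: for $\varphi\colon\ZZ/2\ltimes\SS^1\to\U(1)\ltimes Y$ of \cref{x:ab}, the fixed set $(\SS^1)^{\ZZ/2}=\{N,S\}$ lands as two points inside the two cross-cap circles of $Y^{\langle-1\rangle}$, and fullness of $\Pi\varphi$ is checked in \cref{x:ab pi} not by lifting the path but by rewriting the arrow modulo the orbit action. So the claim that ``the PS arguments go through with only cosmetic modifications'' is not justified for general $\varphi$; induction-type weak equivalences require either a decomposition result or an argument on equivalence classes of arrows, neither of which your sketch supplies.

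The repair is that you never need the general statement: the only functors to be treated are $\Lambda$ and $P$, and since $\lambda$ and $\rho$ are ($G$- resp.\ $H$-equivariant) principal bundles, these are exactly of the form covered by \cite[Proposition 5.6]{PS:fund-gpd}, which is the result the paper cites to conclude. For these bundle legs your Bierstone-based lifting idea is the right one and is precisely what the paper formalises later in \cref{l:lifting} (Bierstone plus Ehresmann makes $\lambda$ a locally trivial fibration between fixed-set components, giving the homotopy lifting property). Restrict your second and third paragraphs to the two legs of the span and your proposal becomes essentially the paper's proof.
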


    \begin{proof}
        Since $G\ltimes X$ and $H\ltimes Y$ are Morita equivalent, there is a biprincipal bibundle $Z$ between these; moreover, this bibundle has the format described in \cref{p:bibundle form,c:bibundle form}.  Thus this bibundle is a $(G\times H)$-space, in which the left and right anchor maps $\lambda\colon Z\to X$ and $\rho\colon Z\to Y$ induce group homomorphisms $\widetilde{\lambda}$ and $\widetilde{\rho}$ that are the projection maps onto $G$ and $H$, resp. 

        Since both $\lambda$ and $\rho$ are principal bundles, both of these maps take the form of \cite[Proposition 5.6]{PS:fund-gpd}, which states that they have equivalent fundamental groupoids. 
    \end{proof}

    We now continue the investigation of the examples considered in the previous section.

    \begin{example}[{\bf Groupoid A}]\labell{x:a2}
        Recall $\ZZ/2\ltimes\SS^1$ of  \cref{x:a1}.   Its fundamental groupoid $\Pi(\ZZ/2\ltimes\SS^1)$ has objects 
        \begin{itemize}
            \item $x_{\langle 1\rangle}\colon(\ZZ/2)/\langle 1\rangle\to\SS^1$ for each $x\in\SS^1$, and
            \item $N_{\ZZ/2}\colon(\ZZ/2)/(\ZZ/2)\to\SS^1$ and $S_{\ZZ/2}\colon(\ZZ/2)/(\ZZ/2)\to\SS^1$ corresponding to the (fixed) north $N$ and south $S$ poles.
        \end{itemize}
        The arrows are given by compositions of the following: 
        \begin{itemize}
            \item Each element $g$ of $\ZZ/2$ induces an arrow  $(\widebar{g}, x)$ where the path is constant, and the arrow goes from $x_{\langle 1\rangle}$ to $(gx)_{\langle 1\rangle}$ moving underlying points along their orbits.
            \item For each $x_0,x_1\in\SS^1$ and element of $\pi_1(\SS^1;x_0)$, there is an arrow $[\widebar{1},p]\colon(x_0)_{\langle 1\rangle}\to(x_1)_{\langle 1\rangle}$ where $p$ is a path from $x_0$ to $x_1$ that is homotopic $\rel\,\{0\}$ to a representative of the element of $\pi_1(\SS^1;x_0)$.
            \item For each $x\in\SS^1$ and element of $\pi_1(\SS^1;x)$, there is an arrow $[\widebar{1},p]\colon x_{\langle 1\rangle}\to N_{\ZZ/2}$ where $p$ is a path from $x$ to $N$ that is homotopic $\rel\,\{0\}$ to a representative of the  element of $\pi_1(\SS^1;x)$.
            \item Similarly, for each $x\in\SS^1$ and element of $\pi_1(\SS^1;x)$ there is an arrow $[\widebar{1},p]\colon x_{\langle 1\rangle}\to S_{\ZZ/2}$.\qedhere
        \end{itemize}
    \end{example} 

    \begin{example}[{\bf Groupoid B}]\labell{x:b2}  
        The Klein bottle $Y=\U(1)\times_{\ZZ/2}\SS^1$ with the action of $\U(1)$ as in \cref{x:b1} has a fundamental groupoid $\Pi(\U(1)\ltimes(\U(1)\times_{\ZZ/2}\SS^1))$ consisting of objects 
        \begin{itemize}
            \item $[e^{i\alpha},e^{i\beta}]_{\langle 1\rangle}$ for each $[e^{i\alpha},e^{i\beta}]$ in $Y$,
            \item $[e^{i\alpha},N]_{\langle-1\rangle}$ for each $\alpha\in[0,2\pi)$, and
            \item $[e^{i\alpha},S]_{\langle-1\rangle}$ for each $\alpha\in[0,2\pi)$.
        \end{itemize}
        The arrows are compositions of arrows of the following: 
        \begin{itemize}
            \item     Each element $g$ of  $\U(1)$ induces an arrow  $(\widebar{g}, x)$ where the path is constant, and the arrow goes from $x_{\langle 1\rangle}$ to $(gx)_{\langle 1\rangle}$ moving underlying points along their orbits.     
            \item For each $y_0:=[e^{i\alpha_0},e^{i\beta_0}]$ and $y_1:=[e^{i\alpha_1},e^{i\beta_1}]\in Y$ and element of $\pi_1(Y;y_0)$, there is an arrow $[\widebar{1},p]$ where $p$ is a path from $y_0$ to $y_1$ homotopic $\rel\,\{0\}$ to a representative of the element of $\pi(Y;y_0)$.
            \item For each $y_0:=[e^{i\alpha_0},e^{i\beta_0}]$ and $y_1:=[e^{i\alpha_1},N]\in Y$ and element of $\pi_1(Y;y_0)$, there is an arrow $[\widebar{1},p]\colon(y_0)_{\langle1\rangle}\to(y_1)_{\langle-1\rangle}$ where $p$ is a path from $y_0$ to $y_1$ homotopic $\rel\,\{0\}$ to a representative of the element of $\pi_1(Y;y_0)$.
            \item Similarly, for each $y_0:=[e^{i\alpha_0},e^{i\beta_0}]$ and $y_1:=[e^{i\alpha_1},S]\in Y$ and element of $\pi_1(Y;y_0)$, there is an arrow $[\widebar{1},p]\colon(y_0)_{\langle1\rangle}\to(y_1)_{\langle-1\rangle}$.\qedhere
        \end{itemize}
    \end{example}    

    \begin{example}[{\bf Groupoid C}]\labell{x:c2}
        The groupoid $D_4\ltimes\SS^1$ as in \cref{x:c1} has fundamental groupoid $\Pi(D_4\ltimes\SS^1)$ with objects
        \begin{itemize}
            \item $x_{\langle 1\rangle}\colon D_4/\langle1\rangle\to\SS^1$ for each $x\in\SS^1$, 
            \item the four ``poles'' $N_{\ZZ/2\times 1}\colon D_4/(\ZZ/2\times 1)\to\SS^1$, $S_{\ZZ/2\times 1}\colon D_4/(\ZZ/2\times 1)\to\SS^1$, $W_{1\times\ZZ/2}\colon D_4/(1\times\ZZ/2)\to\SS^1$, and $E_{1\times\ZZ/2}\colon D_4/(1\times\ZZ/2)\to\SS^1$.
        \end{itemize}
        The arrows are given by compositions of the following:
        \begin{itemize}
            \item     Each element $g$ of  $D_4$ induces an arrow  $(\widebar{g}, x)$ where the path is constant, and the arrow goes from $x_{\langle 1\rangle}$ to $(gx)_{\langle 1\rangle}$ moving underlying points along their orbits.    
            \item For any $x_0,x_1\in\SS^1$ and element of $\pi_1(\SS^1;x_0)$, there is an arrow $[\widebar{1},p]$ where $p$ is a path from $x_0$ to $x_1$ that is homotopic $\rel\,\{0\}$ to a representative of the element of $\pi_1(\SS^1;x_0)$.
            \item For each $x\in\SS^1$ and element of $\pi_1(\SS^1;x)$, there are arrows $[\widebar{1},p]\colon x_{\langle1\rangle}\to N_{\ZZ/2\times 1}$, $[\widebar{1},p]\colon x_{\langle1\rangle}\to S_{\ZZ/2\times 1}$, $[\widebar{1},p]\colon x_{\langle1\rangle}\to W_{1\times\ZZ/2}$, and
            $[\widebar{1},p]\colon x_{\langle1\rangle}\to E_{1\times\ZZ/2}$, where $p$ is a path from $x$ to the corresponding endpoint that is homotopic $\rel\,\{0\}$ to the element of $\pi_1(\SS^1;x)$.\qedhere       
        \end{itemize}
    \end{example}
    
    \begin{example}\labell{x:ab pi}
        Recall also from \cref{x:ab} and \cref{x:ac} that Groupoids A, B and C are all Morita equivalent.   Thus their fundamental groupoids are all equivalent as categories by \cref{t:fund gpd morita}.  For instance, we can verify this directly for the functor $\Pi\varphi\colon\Pi(\ZZ/2\ltimes\SS^1)\to\Pi(\U(1)\ltimes Y)$, where $\varphi$ is the equivariant functor $\varphi\colon\ZZ/2\ltimes\SS^1\to\U(1)\ltimes Y$ given in \cref{x:ab}.  Indeed, essential surjectivity and faithfulness are straightforward to check, and fullness follows from the fact that given an arrow $\left[\overline{e^{i\theta}},[e^{i\alpha(t)},e^{i\beta(t)}]\right]$ between objects in the image of $\Pi\varphi$, this arrow is equal to an arrow in the image of $\Pi\varphi$.  Specifically, it is equal to the arrow  $\left[\overline{e^{i(\theta-\alpha(1))}},[1,e^{i\beta(t)}]\right]$,  since $[e^{i\alpha(t)},e^{i\beta(t)}]=e^{i\alpha(t)}\cdot[1,e^{i\beta(t)}]$ and $\alpha(0),\alpha(1),\theta\in\pi\ZZ$.
    \end{example}
       
    \begin{example}[{\bf Groupoid D}]\labell{x:d3}  Recall also that the groupoids of \cref{x:d1} are NOT Morita equivalent for $n \neq m$.   The fundamental groupoids are also distinct, since the objects  $x_{\langle 1\rangle}\colon \SO(n)/\langle1\rangle\to\SS^n$ will have arrows from  $x_{\langle 1\rangle}$ to itself for any element of the stabiliser $\SO(n-1)$.   Thus $\Pi(\SO(n)\ltimes\SS^n)$ encodes the stabilisers, and so $\Pi(\SO(n)\ltimes\SS^n)$ is equivalent to $\Pi(\SO(m)\ltimes\SS^m)$ if and only if $n=m$.
    \end{example}

\section{Coefficient Systems and Bibundles}\labell{s:coeff systems}

In this section, we will be studying coefficient systems, the coefficients for twisted Bredon-Illman cohomology.   Coefficients for ordinary Bredon-Illman cohomology are defined as functors from the orbit category $\mathcal{O}_G$ to abelian groups.  Ordinary coefficients do not give a Morita-invariant theory without additional restrictions, as illustrated by the example in \cite[Example 5.3]{PS:translation}.  The twisted version of the cohomology addresses this issue by taking our indexing category to be the fundamental groupoid of \cref{s:fund gpd}.   In this theory, twisted coefficients are defined as  functors from the fundamental groupoid to abelian groups. 

When we get to our main result, the Morita invariance of our cohomology theory, we will want to pass from a coefficient system on one action groupoid to a coefficient system on a Morita equivalent groupoid.   As we are using bibundles to encode our Morita equivalences, this means that we want to be able to transfer  coefficient systems from one action groupoid to another across a biprincipal bibundle morphism.   This section develops the theory we need to do this.  

Our first goal is to prove the following.  
   
    \begin{theorem}[Right Inverses to $\Pi\lambda$]\labell{p:right inverse}
        Let $G$ and $H$ be compact Lie groups and let $G\ltimes X\ot{\lambda}Z\uto{\rho} H\ltimes Y$ be a bibundle.
        \begin{enumerate}
            \item\labell{i:right inverse}There is a right inverse $\Sigma\colon\Pi(G\ltimes X)\to\Pi((G\times H)\ltimes Z)$ to $\Pi\lambda$.
            \item\labell{i:right inverse choices}Any two such right inverses of $\Pi\lambda$ differ by a natural isomorphism.
        \end{enumerate}
    \end{theorem}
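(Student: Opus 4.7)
For part~(1), I build $\Sigma$ on objects as follows. Given $x_L$ in $\Pi(G\ltimes X)$, choose $z_{x,L}\in\lambda^{-1}(x)$; principality of $\lambda$ together with the fact that $L$ stabilises $x$ yields a unique continuous homomorphism $\sigma_{x,L}\colon L\to H$ characterised by $l\cdot z_{x,L}=z_{x,L}\cdot\sigma_{x,L}(l)$. The closed subgroup $K_{x,L}:=\{(l,\sigma_{x,L}(l)):l\in L\}\leq G\times H$ stabilises $z_{x,L}$ and projects onto $L$ under the first-coordinate map, so setting $\Sigma(x_L):=(z_{x,L})_{K_{x,L}}$ gives $\Pi\lambda(\Sigma(x_L))=x_L$ on the nose.

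The main technical step is lifting arrows. Given $[\widebar{g},p]\colon x_{L_0}\to y_{L_1}$, I seek a path $\widetilde p$ in $Z^{K_0}$ starting at $z_0:=z_{x,L_0}$ and satisfying $\lambda\circ\widetilde p=p$, where $K_0:=K_{x,L_0}$. The claim underpinning this is that $\lambda\colon Z^{K_0}\to X^{L_0}$ is a locally trivial principal $C_H(\sigma_{x,L_0}(L_0))$-bundle onto its image. Applying \cref{t:bierstone}, I obtain a $G_x$-equivariant trivialisation $\lambda^{-1}(U)\cong U\times H$ over an $L_0$-invariant neighbourhood $U$ of $x$; a direct computation identifies the $K_0$-fixed locus above $U^{L_0}$ with $U^{L_0}\times C_H(\sigma_{x,L_0}(L_0))$. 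Standard path lifting then yields $\widetilde p$, and principality of $\lambda$ produces a unique $h\in H$ with $\widetilde p(1)=(g,h)\cdot z_{y,L_1}$. I set $\Sigma([\widebar{g},p]):=[\widebar{(g,h)},\widetilde p]$; the compatibility $(g,h)^{-1}K_0(g,h)\leq K_{y,L_1}$ required for this to be a legitimate arrow follows by translating the $K_0$-fixedness of $\widetilde p(1)$ and the $K_{y,L_1}$-fixedness of $z_{y,L_1}$ through principality.

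What remains is to verify that $\Sigma$ is well-defined on equivalence classes of arrows (by the same lifting argument applied to homotopies), respects composition (using uniqueness of lifts together with the direct-product structure $(g_1,h_1)(g_2,h_2)=(g_1g_2,h_1h_2)$ in $G\times H$), and satisfies $\Pi\lambda\circ\Sigma=\mathrm{id}$. The main obstacle is the trivialisation computation identifying $Z^{K_0}\to X^{L_0}$ as a principal bundle; once this is in hand, everything else is routine bookkeeping.

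For part~(2), suppose $\Sigma,\Sigma'$ are two right inverses with $\Sigma(x_L)=z_K$ and $\Sigma'(x_L)=z'_{K'}$. Principality of $\lambda$ gives a unique $h_{x,L}\in H$ with $z'=z\cdot h_{x,L}$, and applying $l\cdot z'=z'\cdot\sigma'(l)$ forces $\sigma'(l)=h_{x,L}^{-1}\sigma(l)h_{x,L}$, so $(1,h_{x,L})^{-1}K(1,h_{x,L})=K'$. Hence $S(x_L):=[\widebar{(1,h_{x,L})},c_z]$, where $c_z$ denotes the constant path at $z$, is a well-defined invertible arrow $\Sigma(x_L)\to\Sigma'(x_L)$ in $\Pi((G\times H)\ltimes Z)$. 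Naturality of $S$ on an arrow $\phi:=[\widebar{g},p]\colon x_{L_0}\to y_{L_1}$ reduces, once more by fibrewise uniqueness of path lifting, to observing that $\Sigma'(\phi)\circ S(x_{L_0})$ and $S(y_{L_1})\circ\Sigma(\phi)$ are two arrows lying over $[\widebar{g},p]$ whose underlying paths differ only by the constant ``gauge transformations'' $h_{x,L_0}$ and $h_{y,L_1}$.
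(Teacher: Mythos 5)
Your construction mirrors the paper's own proof in all its essentials: the graph subgroup $K_{x,L}$ you build from principality is exactly the paper's $\Gamma_z$ from \cref{l:gamma}, the arrow assignment lifts the underlying path into the fixed set and extracts the $H$-component by principality, and your conjugating arrow in part (2) is the paper's natural isomorphism. The genuine gap is in the lifting step. You show, via \cref{t:bierstone}, that $\lambda\colon Z^{K_0}\to X^{L_0}$ is a locally trivial principal $C_H(\sigma_{x,L_0}(L_0))$-bundle \emph{onto its image}, and then invoke ``standard path lifting.'' But the path $p$ underlying an arrow $[\widebar{g},p]$ is an arbitrary path in $X^{L_0}$ starting at $x$, and nothing you have said prevents it from leaving the image $\lambda(Z^{K_0})$, at which point no lift exists. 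What is needed is that $\lambda(Z^{K_0})$ contains the entire connected component of $x$ in $X^{L_0}$, i.e.\ that the image is closed as well as open in $X^{L_0}$. Your trivialisation computation gives openness, but closedness is a separate argument, and it is exactly where compactness of $H$ enters; this surjectivity-onto-the-component statement is the whole point of the paper's \cref{l:lifting} (proved there via properness and Ehresmann's Lemma). The same issue recurs when you lift homotopies to check well-definedness on equivalence classes of arrows.

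The second problem is your repeated appeal to ``uniqueness of lifts'' and ``fibrewise uniqueness of path lifting'' for composition and for naturality in part (2). Lifts of a path with prescribed initial point in a principal bundle are \emph{not} unique unless the structure group is discrete, and here $C_H(\sigma_{x,L_0}(L_0))$ (or $H$ itself) is typically positive-dimensional: for the trivial bundle $X\times\U(1)\to X$, every path $t\mapsto(p(t),e^{i\theta(t)})$ with $\theta(0)=0$ lifts $p$ from the same starting point. Likewise, two arrows of $\Pi((G\times H)\ltimes Z)$ lying over the same arrow of $\Pi(G\ltimes X)$ with the same source and target need not be equal, so naturality cannot be read off from ``both squares lie over $[\widebar{g},p]$.'' The correct mechanism, used in the paper, is to join any two lifts with the same initial point by a lifted homotopy, record the endpoint track as a path $h_s$ in $H$, and use principality to conclude that the classes $\left[\overline{(g,h)},\widetilde{p}\right]$ agree; your own well-definedness step gestures at precisely this, and it does repair both composition and naturality, but as written those steps rest on a false uniqueness statement. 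With the clopen-image argument supplied and the uniqueness appeals replaced by the lifted-homotopy argument, your proof coincides with the paper's.
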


    Before proving \cref{p:right inverse}, we note that as a consequence of this theorem, given a bibundle $$P:=(G\ltimes X\ot{\lambda}Z\uto{\rho} H\ltimes Y),$$  $\Pi P\colon\Pi(G\ltimes X)\to\Pi(H\ltimes Y)$ will only be defined up to natural isomorphism.   This will be sufficient to give us the results we need on coefficient systems later in this section.  

    To prove \cref{p:right inverse}, we will begin by proving several lemmas relating fixed sets in  $X$  to those in a bibundle $Z$.

    \begin{lemma}\labell{l:gamma}
        Let $\lambda\colon Z\to X$ be a $G$-equivariant principal $H$-bundle in which the $G$- and $H$-actions on $Z$ commute.
        \begin{enumerate}
            \item\labell{i:gamma}Given a closed subgroup $K\leq G$, a point $x\in X^K$, and a point $z\in\lambda^{-1}(x)$, there is a unique subgroup $\Gamma^K_z$ of $G\times H$ such that $\pr_1(\Gamma^K_z)=K$ and $z\in Z^{\Gamma^K_z}$.
            \item\labell{i:gamma conj}Given another $z'\in\lambda^{-1}(x)$, the group  $\Gamma^K_{z'}$ is conjugate to $\Gamma^K_z$.  
            \item\labell{i:gamma stab}If $K=\Stab_G(x)$, then $\Gamma^K_z=\Stab_{G\times H}(z)$. 
        \end{enumerate}            
    \end{lemma}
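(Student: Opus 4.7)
The plan is to build $\Gamma^K_z$ as the graph of a canonical group homomorphism $K \to H$, whose construction exploits the principality of the $H$-action on each fibre of $\lambda$.

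For \cref{i:gamma}, fix $k \in K$. Since $\lambda$ is $G$-equivariant and $x \in X^K$, the point $kz$ lies in the fibre $\lambda^{-1}(x)$, so by principality of the $H$-action on that fibre there exists a unique $h_k \in H$ with $kz = z h_k$; equivalently, using the convention of \cref{r:bibundle}, $(k, h_k) \cdot z = z$. The assignment $k \mapsto h_k$ is a homomorphism, because from $(k_1 k_2) z = k_1 (z h_{k_2}) = (k_1 z) h_{k_2} = z h_{k_1} h_{k_2}$ one reads off $h_{k_1 k_2} = h_{k_1} h_{k_2}$; it is moreover smooth, being the composition of $k \mapsto (z, kz)$ with the smooth division map of the principal bundle $\lambda$. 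Setting $\Gamma^K_z := \{(k, h_k) : k \in K\}$ thus yields a closed subgroup of $G \times H$ with $\pr_1(\Gamma^K_z) = K$ and $z \in Z^{\Gamma^K_z}$ by construction. Uniqueness will follow from principality once more: for any closed subgroup $\Gamma' \leq G \times H$ with $\pr_1(\Gamma') = K$ and $z \in Z^{\Gamma'}$, each $(k, h) \in \Gamma'$ satisfies $kz = zh$, which forces $h = h_k$; combined with the surjectivity of $\pr_1|_{\Gamma'}$ onto $K$, this gives $\Gamma' = \Gamma^K_z$.

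For \cref{i:gamma conj}, I would write $z' = zh$ for the unique $h \in H$ supplied by principality, and verify directly that $(k, h_k') \cdot z' = z'$ holds if and only if $h_k' = h^{-1} h_k h$; this yields $\Gamma^K_{z'} = (1, h)^{-1} \Gamma^K_z (1, h)$, a conjugate of $\Gamma^K_z$ in $G \times H$. For \cref{i:gamma stab}, the inclusion $\Gamma^K_z \subseteq \Stab_{G \times H}(z)$ is tautological from the construction; for the reverse, if $(g, h) \cdot z = z$ then $gz = zh$, and applying the $H$-invariant, $G$-equivariant map $\lambda$ gives $gx = x$, so $g \in \Stab_G(x) = K$, whence $(g, h) \in \Gamma^K_z$ by the uniqueness just proved.

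The main technical subtlety I anticipate is verifying that $\Gamma^K_z$ is genuinely a closed Lie subgroup of $G \times H$, which reduces to the smoothness of the map $k \mapsto h_k$; this in turn relies on the smoothness of the division map of a principal bundle. Once that is in hand, the remainder is a straightforward matter of carefully unpacking principality.
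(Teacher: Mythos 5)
Your proposal is correct and follows essentially the same route as the paper's proof: define $\Gamma^K_z$ as the graph of the homomorphism $K\to H$ forced by principality (smooth via the division map of the bundle), obtain conjugacy of $\Gamma^K_{z'}$ by translating within the fibre, and identify $\Gamma^K_z$ with $\Stab_{G\times H}(z)$ using $G$-equivariance and $H$-invariance of $\lambda$ together with the pointwise uniqueness from freeness of the $H$-action.
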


    \begin{proof}
        Fix $K$, $x$, and $z$ as in \cref{i:gamma}.  For any $k\in K$, since $kz\in\lambda^{-1}(x)$ and $\lambda\colon Z\to X$ is $H$-principal, it follows that there is a unique $\zeta^K_z(k)\in H$ such that $kz\zeta_z(k)^{-1}=z$.  In fact, for any $k_0,k_1\in K$, $$(k_1k_0)z\zeta^K_z(k_1k_0)^{-1}=z=k_1\left(k_0z\zeta^K_z(k_0)^{-1}\right)\zeta^K_z(k_1)^{-1};$$ from principality of $\lambda$ it follows that $\zeta^K_z(k_1k_0)=\zeta^K_z(k_1)\zeta^K_z(k_0)$.   Since the $H$-principal bundle $\lambda\colon Z\to X$ admits a diffeomorphism $A\colon H\times Z\to Z\ftimes{\lambda}{\lambda}Z$ sending $(h,z)$ to $(z,zh^{-1})$, the division map $d=\pr_1\circ A^{-1}\colon Z\ftimes{\lambda}{\lambda}Z\to H\colon(z,zh^{-1})\mapsto h$ is smooth, from which it follows that  $\zeta^K_z\colon K\to H$ is smooth.  Thus $\zeta^K_z$ is a Lie group homomorphism.  Define $\Gamma^K_z$ to be the graph of $\zeta^K_z$, which is a closed subgroup of $G\times H$.  Since $\pr_1(\Gamma^K_z)=K$, uniqueness follows from principality.  This proves \cref{i:gamma}.

        \cref{i:gamma conj} follows again from principality:   if $z, z'$ are two lifts of $x$, then there is a unique $b\in H$ such that $z'=zb^{-1}$.  Then by definition, $z = k z \zeta^K_z(k)^{-1}$ and so $zb^{-1}  = kz\zeta^K_z(k)^{-1} b^{-1} = kzb^{-1} b \zeta^K_z(k)^{-1} b^{-1}  $.   Therefore $\zeta^K_{z'}(k) = b \zeta^K_z(k)^{-1} b^{-1}$. 

        Suppose $K=\Stab_G(x)$ and let $(g,h)\in\Stab_{G\times H}(z)$.  Then $g\in K$ and by principality, $h=\zeta^K_z(g)$.  Thus $(g,h)\in\Gamma^K_z$.  It follows that $\Gamma^K_z=\Stab_{G\times H}(z)$.    This proves \cref{i:gamma stab}.
    \end{proof}
    
    \begin{remark}\labell{r:gamma}  We make note of the following specifics in our proof, which will be useful going forward:  
        \begin{itemize}
            \item  The subgroup $\Gamma^K_z$ is defined as the  graph of a group homomorphism $\zeta^K_z\colon K\to H$ defined by $$kz\zeta^K_z(k)^{-1}=z.$$ 
            \item If $z, z'$ are two lifts of $x$,  then   $$\Gamma^K_{z'}=(1_G,b)\,\Gamma^K_z\,(1_G,b^{-1})$$ where $b \in  H$ is the unique element  such that $z'=zb^{-1}$.  
            \item By principality,  for any closed subgroup $L\leq\Stab_G(x)$, the equality $\zeta^L_z=\zeta^{\Gamma_z}_z|_L$ holds.   Hence  we will typically drop the superscript $K$ or $L$ from $\zeta_z$ and $\Gamma_z$ unless it is needed for clarity, henceforth. \qedhere
        \end{itemize}
    \end{remark}

    \begin{lemma}\labell{l:lifting}
        Let $G$ and $H$ be compact Lie groups and $\lambda\colon Z\to X$ a $G$-equivariant principal $H$-bundle in which the $G$- and $H$-actions on $Z$ commute.  Let $K$ be a closed subgroup of $G$ and fix $x_0\in X^K$, $z_0\in\lambda^{-1}(x_0)$, and  define  $\Gamma_{z_0} \leq G \times H$ the pre-image of $K$ fixing $z_0$ as in  \cref{l:gamma}. If $C$ is the connected component of $Z^{\Gamma_{z_0}}$ containing $z_0$, and $D$ is the connected component of $X^K$ containing $x_0$, then $\lambda|_C\colon C\to D$ is a locally trivial fibration.  Consequently, $\lambda|_C$ has the homotopy lifting property.
    \end{lemma}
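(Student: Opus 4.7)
The plan is to use Bierstone's local triviality (\cref{t:bierstone}) for $\lambda$ and then restrict to the appropriate fixed sets and connected components. First I would verify that $\lambda(C)\subseteq D$: because $\lambda$ is $G$-equivariant and $\pr_1(\Gamma_{z_0})=K$, we have $\lambda(Z^{\Gamma_{z_0}})\subseteq X^K$; connectedness of $C$ together with $\lambda(z_0)=x_0$ then places the image inside the single component $D$.

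Next, I would establish local triviality at an arbitrary point $x\in\lambda(C)$. Pick a lift $z\in C$ of $x$; since $K\leq\Stab_G(x)$, \cref{t:bierstone} supplies a $K$-invariant, connected neighbourhood $U$ of $x$ and a $K$-equivariant $H$-bundle diffeomorphism $\psi\colon\lambda^{-1}(U)\to U\times\lambda^{-1}(x)$. Because $z\in Z^{\Gamma_{z_0}}$, \cref{r:gamma} gives $kz=z\zeta_{z_0}(k)$ for all $k\in K$. Parametrising $\lambda^{-1}(x)$ as $\{z\cdot a^{-1}\mid a\in H\}$ and computing the action of $(k,\zeta_{z_0}(k))$ in the trivialization, I would identify
\[
\lambda^{-1}(U)^{\Gamma_{z_0}}\;\cong\;U^K\times\lambda^{-1}(x)^{\Gamma_{z_0}},
\]
with the fibre factor corresponding to the centraliser $Z_H(\zeta_{z_0}(K))$ via $a\mapsto za^{-1}$. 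Shrinking $U$ so that $U\cap X^K$ is connected (possible because $X^K$ is a smooth submanifold of $X$), this intersection equals $U\cap D$. Taking components, $C\cap\lambda^{-1}(U)$ splits as a disjoint union of pieces $(U\cap D)\times F_{j}$ indexed by the components $F_j$ of $\lambda^{-1}(x)^{\Gamma_{z_0}}$ lying in $C$, which assembles into $(U\cap D)\times(C\cap\lambda^{-1}(x))$. Thus $\lambda|_C$ is locally the projection onto $U\cap D$.

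To upgrade this into a fibration over all of $D$, I would show $\lambda(C)=D$. Openness of $\lambda(C)$ in $D$ is immediate from the trivializations above. For closedness, I would use properness: $H$ is compact, so $\lambda$ is proper, and $C$ is closed in $Z$ as a component of the closed submanifold $Z^{\Gamma_{z_0}}$ (smooth because $G\times H$ is compact Lie). Hence $\lambda|_C$ is proper, its image is closed in $X$, and therefore closed in $D$. Non-emptiness ($x_0\in\lambda(C)$) and connectedness of $D$ then force $\lambda(C)=D$, so $\lambda|_C\colon C\to D$ is a locally trivial fibration. The homotopy lifting property follows at once, since locally trivial fibrations over paracompact Hausdorff bases (such as the manifold $D$) are Hurewicz fibrations by a standard argument.

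The principal obstacle is the fixed-point computation in the second paragraph: one must use the chosen lift $z\in C$ (which satisfies $\zeta_z=\zeta_{z_0}$ on $K$ precisely because $z\in Z^{\Gamma_{z_0}}$) in order to rewrite the $\Gamma_{z_0}$-action through the Bierstone trivialization, and then track which components of $\lambda^{-1}(U)^{\Gamma_{z_0}}$ sit inside the particular component $C$ of $Z^{\Gamma_{z_0}}$, rather than some other component conjugate to it.
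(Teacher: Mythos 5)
Your proof is correct, but it takes a genuinely different route from the paper's. Both arguments hinge on Bierstone's equivariant local triviality (\cref{t:bierstone}) and on compactness of $H$, but the paper feeds these into Ehresmann's lemma: it checks that $\lambda|_C$ is a proper surjective submersion (surjectivity by producing a $\Gamma_{z_0}$-fixed point in the equivariant chart over each $x\in D$, submersivity by locally lifting smooth paths in $X^K$, properness by a sequential argument using compactness of $H$), and local triviality then follows. You bypass Ehresmann entirely: you compute the $\Gamma_{z_0}$-fixed set inside the Bierstone chart, identify the fibre explicitly as (components of) the torsor over the centraliser $Z_H(\zeta_{z_0}(K))$, and so obtain local trivializations of $\lambda|_C$ directly; you then get $\lambda(C)=D$ by an open--closed argument (openness from the trivializations, closedness from properness of $\lambda|_C$, using that $C$ is closed in $Z$ and $H$ is compact), and finally invoke the standard fact that a locally trivial map over a paracompact base is a Hurewicz fibration. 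What your approach buys is an explicit description of the fibre and a careful handling of the component bookkeeping---why the locally constructed lifts lie in the particular component $C$ and why the image is all of $D$---a point the paper's surjectivity step passes over rather quickly when it asserts that the fixed point built in the chart lies in $C$. What the paper's route buys is economy: once properness, surjectivity and submersivity are verified, Ehresmann's lemma yields local triviality with no component analysis and no appeal to the uniformization theorem, and the whole argument stays in the smooth category.
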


    \begin{proof}
        By Ehresmann's Lemma \cite{ehresmann}, $\lambda|_C$ is a locally trivial fibration if it is a proper surjective submersion.  We begin with surjective submersivity.

        Fix $x\in D$. By \cref{t:bierstone}, there is a $\Stab_G(x)$-invariant (hence $K$-invariant) open neighbourhood $U$ of $x$ and a $\Stab_G(x)$-equivariant (hence $K$-equivariant) principal $H$-bundle diffeomorphism $\psi\colon\lambda^{-1}(U)\to U\times H$; let $z=\psi^{-1}(x,1_H)$.  The action of $\Stab_G(x)\times H$ on $U\times H$ is given by 
        $$(\ell,h)\cdot(x',h')=\left(\ell\cdot x',hh'\zeta_z(\ell)^{-1}\right).$$
        In particular, for $(k,\zeta_{z_0}(k))\in\Gamma_{z_0}$,
        $$(k,\zeta_{z_0}(k))\cdot z=\psi^{-1}(kx,\zeta_{z_0}(k)\zeta_{z_0}(k)^{-1})=\psi^{-1}(x,1_H)=z.$$
        Thus $z\in C$, proving surjectivity of $\lambda|_C$ onto $D$.  In fact, since $\pr_1(\Gamma_{z_0})=K$ and $z\in Z^{\Gamma_{z_0}}$, we have $\Gamma_z^K=\Gamma_z=\Gamma_{z_0}$ by \cref{i:gamma} of \cref{l:gamma}. 
        
        Let $p\colon\RR\to X$ be a smooth path in $X^K$ with $p(0)=x$.  There exists $\eps>0$ such that $p(-\eps,\eps)\subset U$.  Define the smooth path $\widetilde{p}\colon(-\eps,\eps)\to Z$ by $\widetilde{p}(t):=\psi^{-1}(p(t),1_H)$.  For any $(k,\zeta_z(k))\in\Gamma_z$, $$(k,\zeta_z(k))\cdot\widetilde{p}(t)=\psi^{-1}(kp(t),\zeta_z(k)1_H\zeta_z(k)^{-1})=\psi^{-1}(p(t),1_H)=\widetilde{p}(t).$$  So $\widetilde{p}$ is a smooth local lift of $p$ to $Z^{\Gamma_{z}}$.  This shows that $\lambda|_C$ is submersive.

        Let $(z_i)$ be a sequence in $Z^{\Gamma_z}\cap\lambda^{-1}(U)$ such that $x_i:=\lambda(z_i)$ converges to some $x_\infty\in U$.  There exists a sequence $(h_i)$ in $H$ such that for each $i$, $$z_i=\psi^{-1}(x_i,h_i).$$  Since $H$ is compact, there is a subsequence $\left(h_{i_j}\right)$ of $(h_i)$ that converges to some $h_\infty\in H$.  Then $z_\infty:=\psi^{-1}(x_\infty,h_\infty)$ is the limit of $\left(z_{i_j}\right)$.  Since $Z^{\Gamma_z}$ is closed in $Z$, $z_\infty\in Z^{\Gamma_z}$.  Thus $\lambda|_C$ is a proper map.  This completes the proof.
    \end{proof}    

    We are now ready to prove \cref{p:right inverse}.

    \begin{proof}[Proof of \cref{p:right inverse}]
    We begin by defining the right inverse $\Sigma$.  
        For each $x\in X$, choose $z\in\lambda^{-1}(x)$.  Given  $x_K$ of $\Pi(G\ltimes X)$, we define $\Sigma(x_K):=z_{\Gamma_z}$ in $\Pi((G\times H)\ltimes Z)$, where $\Gamma_z$  is the lift of $K$ fixing $z$ as in \cref{i:gamma} of \cref{l:gamma}.  

        For an arrow $\left[\widebar{g},p\right]\colon(x_0)_{K_0}\to(x_1)_{K_1}$, and choices $z_i\in\lambda^{-1}(x_i)$ ($i=0,1$), define $\Sigma\left[\widebar{g},p\right]$ to be $\left[\overline{(g,h)},\widetilde{p}\right]$, where $\widetilde{p}$ is a lift of $p$ to $Z^{\Gamma_z}$ starting at $z_0$ (which exists by \cref{l:lifting}), and $h$ is the unique element of $H$ such that $\widetilde{p}(1)=gz_1h^{-1}$.  

        We need to show that this is well-defined in spite of the choice of lift $\widetilde{p}$.   So suppose that $\widetilde{p}'$ is a different lift of $p$ contained in $Z^{\Gamma_z}$ starting at $z_0$, and $h'$ is the unique element of $H$ such that $\widetilde{p}'(1)=gz_1(h')^{-1}$.  We can define an homotopy  $\rel\;\{0\}$,  $\widetilde{\Phi}$,  from $\widetilde{p}$ to $\widetilde{p}'$ contained in $Z^{\Gamma_z}$, by lifting the trivial homotopy from $p$ to itself via \cref{l:lifting}.  Since $\lambda$ is a principal $H$-bundle, there is a path $h_s$ in $H$ starting at $1_H$ such that $\widetilde{\Phi}(s,1)=\widetilde{p}(1)h_s^{-1}$.  It follows that $$\left[\overline{\left(g,h\right)},\widetilde{p}\right]=\left[\overline{\left(g,h'\right)},\widetilde{p}'\right],$$ and so $\Sigma([\widebar{g},p])$ is well-defined.

        It is immediate that $\Sigma$ sends units to units.  We check that $\Sigma$ respects composition: for $i=0,1,2$ let $(x_i)_{K_i}$ be objects of $\Pi(G\ltimes X)$, and for $i=0,1$ let $[\widebar{g_i},p_i]\colon (x_i)_{K_i}\to (x_{i+1})_{K_{i+1}}$ be arrows,  with composition defined by  $$[\widebar{g_0},p_0][\widebar{g_1},p_1]=[\overline{g_0g_1},p_0*g_0p_1];$$  illustrated in \cref{f:sigma comp}.  For choices $z_i\in\lambda^{-1}(x_i)$ ($i=0,1,2$), and for $i=0,1$ let $\widetilde{p}_i$ be a lift of $p_i$ starting at $z_i$ and ending at $\widetilde{z}_i$.  For $i=0,1$ there exist unique $\widetilde{h_i}\in H$ such that $g_iz_{i+1}(\widetilde{h}_i)^{-1}=\widetilde{z}_i$.  The path $\widetilde{p}_0*g_0\widetilde{p}_1\widetilde{h}_0^{-1}$ is a lift of $p_0*g_0p_1$ starting at $z_0$ and ending at $g_0\widetilde{z}_1\widetilde{h}_0^{-1}$. Since $\lambda$ restricts to a map with the homotopy lifting property on  fixed sets by \cref{l:lifting}, any other path $\widetilde{q}$ lifting $p_0*g_0p_1$ starting at $z_0$ and ending at $g_0g_1z_2\ell^{-1}$ for some unique $\ell$ will satisfy $$\left[\overline{(g_0g_1,\ell)},\widetilde{q}\right]=\left[\overline{(g_0g_1,h_0h_1)},\widetilde{p}_0*g_0\widetilde{p}_1\widetilde{h}_0^{-1}\right],$$ by the well-definedness argument above.  It follows that        $$\Sigma\left(\left[\widebar{g_0},p_0\right]\left[\widebar{g_1},p_1\right]\right)=\Sigma\left(\left[\widebar{g_0},p_0\right]\right)\Sigma\left(\left[\widebar{g_1},p_1\right]\right).$$
        This proves that $\Sigma$ respects composition, and hence is a functor, proving \cref{i:right inverse}.

        \begin{figure}
        \centering
            \begin{tikzpicture}[thick]
                \coordinate (c);
                \draw[black] (c) ellipse (4.5 and 2.0);
                \node(X)[left of=c,xshift=-4cm]{$X$};
                \node(x1)[yshift=-1cm]{};
                \node(g0x1)[above of=x1]{};
                \node(x0)[left of=c,xshift=-1.5cm,yshift=-1cm]{};
                \node(x2)[right of=c,xshift=1.5cm,yshift=-1cm]{};
                \node(g1x2)[above of=x2]{};
                \node(g0g1x2)[above of=g1x2]{};
                \node(p0)[above of=x0,xshift=0.8cm,yshift=-0.25cm]{$\scriptstyle{p_0}$};
                \node(p1)[right of=x1,xshift=0.5cm,yshift=0.25cm]{$\scriptstyle{p_1}$};
                \node(g0p1)[above of=g0x1,xshift=0.8cm,yshift=-0.25cm]{$\scriptstyle{g_0p_1}$};
                \path[draw=black] (x0) -- node{\firstmidarrow}(g0x1);
                \path[draw=black] (x1) -- node{\firstmidarrow}(g1x2);
                \path[draw=black] (g0x1) -- node{\firstmidarrow}(g0g1x2);
                \filldraw[black] (x0) circle (2pt) node[below] {$x_0$};
                \filldraw[black] (x1) circle (2pt) node[below] {$x_1$};
                \filldraw[black] (x2) circle (2pt) node[below] {$x_2$};
                \filldraw[black] (g0x1) circle (2pt) node[below] {$g_0x_1$};
                \filldraw[black] (g1x2) circle (2pt) node[below] {$g_1x_2$};
                \filldraw[black] (g0g1x2) circle (2pt) node[below,xshift=2mm] {$g_0g_1x_2$};

                \draw[black] (c)[yshift=5cm] ellipse (4.5 and 2.0);
                \node(Z)[left of=c,xshift=-4cm,yshift=5cm]{$Z$};
                \node(z1)[yshift=4cm]{};
                \node(z0-tilde)[above of=x1,yshift=5cm]{};
                \node(z0)[left of=c,xshift=-1.5cm,yshift=4cm]{};
                \node(z2)[right of=c,xshift=1.5cm,yshift=4cm]{};
                \node(z1-tilde)[above of=z2]{};
                \node(g0h0z1-tilde)[above of=z1-tilde]{};
                \node(p0-tilde)[above of=z0,xshift=0.8cm,yshift=-0.25cm]{$\scriptstyle{\widetilde{p}_0}$};
                \node(p1-tilde)[right of=z1,xshift=0.5cm,yshift=0.25cm]{$\scriptstyle{p_1}$};
                \node(g0h0p1-tilde)[above of=z0-tilde,xshift=0.8cm,yshift=-0.25cm]{$\scriptstyle{g_0\widetilde{p}_1\widetilde{h}_0^{-1}}$};
                \path[draw=black] (z0) -- node{\firstmidarrow}(z0-tilde);
                \path[draw=black] (z1) -- node{\firstmidarrow}(z1-tilde);
                \path[draw=black] (z0-tilde) -- node{\firstmidarrow}(g0h0z1-tilde);
                \filldraw[black] (z0) circle (2pt) node[below] {$z_0$};
                \filldraw[black] (z1) circle (2pt) node[below] {$z_1$};
                \filldraw[black] (z2) circle (2pt) node[below] {$z_2$};
                \filldraw[black] (z0-tilde) circle (2pt) node[below] {$\widetilde{z}_0$};
                \filldraw[black] (z1-tilde) circle (2pt) node[below] {$\widetilde{z}_1$};
                \filldraw[black] (g0h0z1-tilde) circle (2pt) node[below,xshift=2mm] {$g_0\widetilde{z}_1\widetilde{h}_0^{-1}$};

                \node(lambda0)[above of=c,yshift=2cm]{};
                \node(lambda1)[above of=c,yshift=1cm]{};
                \path[->,draw=black] (lambda0) -- (lambda1);
                \node(lambda)[above of=c,yshift=1.5cm,xshift=3mm]{$\lambda$};
            \end{tikzpicture}
            \caption{$\Sigma$ respects composition.}\labell{f:sigma comp}
        \end{figure}
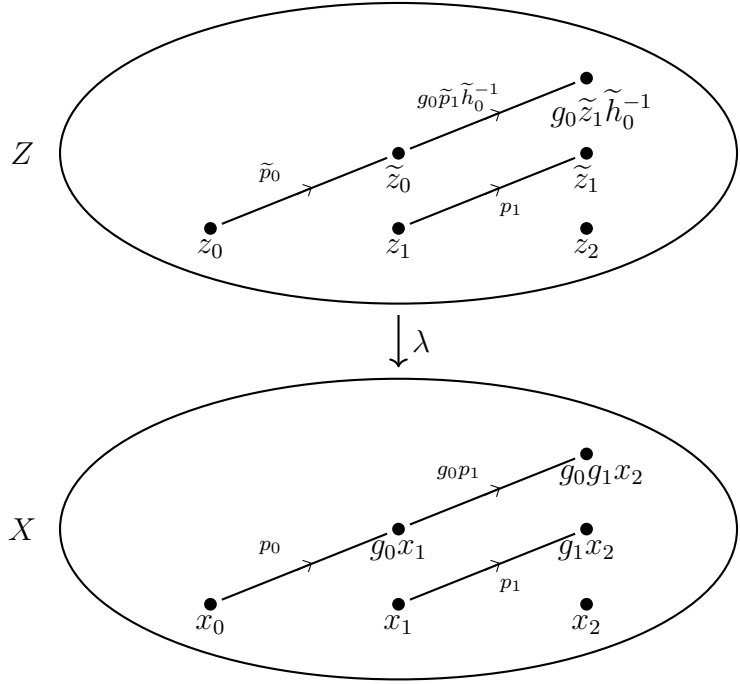       

        Now suppose we make different choices: for each $x\in X$ choose $z'\in\lambda^{-1}(x)$, leading to a right inverse $\Sigma'\colon\Pi(G\ltimes X)\to\Pi((G\times H)\ltimes Z)$ to $\Pi\lambda$.  By \cref{i:gamma conj} of \cref{l:gamma} (and \cref{r:gamma}), for each object $x_K$ of $\Pi(G\ltimes X)$ there is a unique arrow $$\left[\overline{(1_G,b)},z'\right]\colon \Sigma'(x_K)=z'_{\Gamma_{z'}}\to \Sigma(x_K)=z_{\Gamma_{z}}.$$ 

        Fix an arrow $[\widebar{g},p]\colon(x_0)_{K_0}\to(x_1)_{K_1}$ in $\Pi(G\ltimes X)$.  Let $\widetilde{p}$ be a lift of $p$ in $Z^{\Gamma_{z_0}}$ starting at $z_0$ and ending at $\widetilde{z}$.  Let $\widetilde{p}'$ be a lift of $p$ in $Z^{\Gamma_{z'}}$ starting at $z_0'$ and ending at $\widetilde{z}'$.  For $i=0,1$, let $b_i'\in H$ be the unique element such that $z_i'=z_i(b_i)^{-1}$, and let $\widetilde{h},\widetilde{h}'\in H$ be the unique elements such that $gz_1\widetilde{h}^{-1}=\widetilde{z}$ and $gz'_1(\widetilde{h}')^{-1}=\widetilde{z}'$, resp.\ (see \cref{f:choices}).

        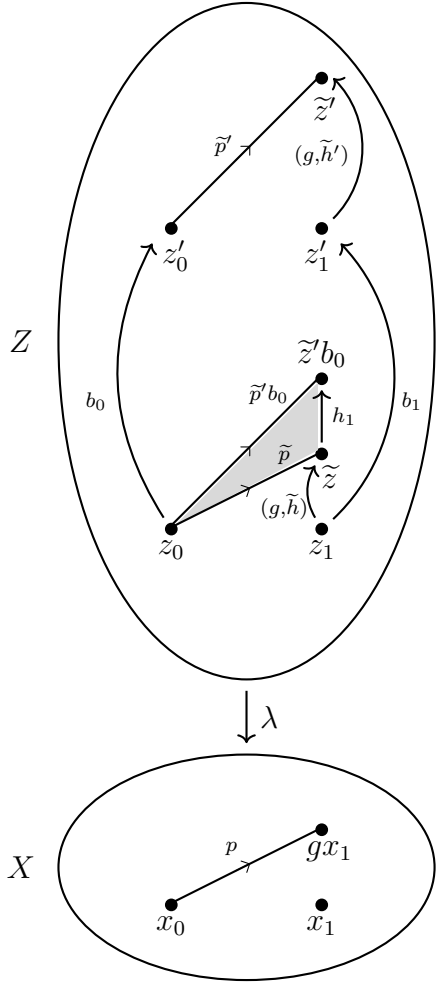
\begin{figure}
            \centering
            \begin{tikzpicture}[thick]
                \coordinate(d);
                \draw[black] (d) ellipse (2.5 and 1.5);
                \node(X)[left of=d,xshift=-2cm]{$X$};
                \node(x0)[left of=d,yshift=-0.5cm]{};
                \node(x1)[right of=d,yshift=-0.5cm]{};
                \node(gx1)[above of=x1]{};
                \node(p)[above of=x0,xshift=0.8cm,yshift=-0.25cm]{$\scriptstyle{p}$};
                \path[draw=black] ([xshift=-2mm]x0.east) -- node{\firstmidarrow} ([xshift=1mm]gx1.west);
                \filldraw[black] (x0) circle (2pt) node[below] {$x_0$};
                \filldraw[black] (x1) circle (2pt) node[below] {$x_1$};
                \filldraw[black] (gx1) circle (2pt) node[below,xshift=1mm] {$gx_1$};

                \draw[black] (d)[yshift=7cm] ellipse (2.5 and 4.5);
                \node(Z)[left of=d,xshift=-2cm,yshift=7cm]{$Z$};
                \node(z0)[left of=d,yshift=-0.5cm,yshift=5cm]{};
                \node(z1)[right of=d,yshift=-0.5cm,yshift=5cm]{};
                \node(z-tilde)[above of=z1]{};
                \filldraw[black] (z0) circle (2pt) node[below] {$z_0$};
                \filldraw[black] (z1) circle (2pt) node[below] {$z_1$};
                \filldraw[black] (z-tilde) circle (2pt) node[below,xshift=1mm] {$\widetilde{z}$};          

                \node(z'0)[left of=d,yshift=8.5cm]{};
                \node(z'1)[right of=d,yshift=8.5cm]{};
                \node(z'-tilde)[above of=z'1,yshift=1cm]{};
                \node(p'-tilde)[above of=z'0,xshift=0.7cm,yshift=1mm]{$\scriptstyle{\widetilde{p}'}$};
                \path[draw=black] ([xshift=-2mm]z'0.east) -- node{\secondmidarrow} ([xshift=1mm]z'-tilde.west);
                \filldraw[black] (z'0) circle (2pt) node[below] {$\;z'_0$};
                \filldraw[black] (z'1) circle (2pt) node[below] {$z'_1\;$};
                \filldraw[black] (z'-tilde) circle (2pt) node[below,xshift=1mm,yshift=-1mm] {$\widetilde{z}'\;$};  

                \path[->,draw=black] ([yshift=2mm]z0) to [bend left] ([xshift=-1mm,yshift=-2mm]z'0.west);
                \node(b0)[above of=z0,xshift=-1cm,yshift=0.7cm]{$\scriptstyle{b_0}$};
                \path[->,draw=black] ([yshift=2mm]z1) to [bend right=45] ([xshift=1mm,yshift=-2mm]z'1.east);
                \node(b1)[above of=z1,xshift=1.2cm,yshift=0.7cm]{$\scriptstyle{b_1}$};
                \node(z'-tildeb0)[above of=z-tilde]{};
                \filldraw[black] (z'-tildeb0) circle (2pt) node[above] {$\widetilde{z}'b_0$};
                \fill[gray!30] ([xshift=0.5mm,yshift=0.5mm]z0.center) to ([xshift=-0.5mm,yshift=0.5mm]z-tilde.center) -- ([xshift=-0.5mm,yshift=-0.5mm]z'-tildeb0.center);
                \path[draw=black] ([xshift=-2mm]z0.east) -- node{\firstmidarrow} ([xshift=1mm]z-tilde.west);
                \path[draw=black] ([xshift=-2mm]z0.east) -- node{\secondmidarrow} ([xshift=1mm]z'-tildeb0.west);
                \path[->,draw=black] (z-tilde) -- (z'-tildeb0);
                \node(h1)[above of=z1,yshift=0.5cm,xshift=3mm]{$\scriptstyle{h_1}$};
                \path[->,draw=black] (z1) to [bend left] (z-tilde);
                \node(gh-tilde)[above of=z1,yshift=-0.7cm,xshift=-0.5cm]{$\scriptstyle(g,\widetilde{h})$};
                \node(p-tilde)[above of=z0,xshift=1.5cm]{$\scriptstyle{\widetilde{p}}$};   
                \node(p'-tildeb0)[above of=p-tilde,xshift=-2mm,yshift=-2mm]{$\scriptstyle{\widetilde{p}'b_0}$};
                \path[->,draw=black] (z'1) to [bend right=45] (z'-tilde.east);
                \node(gh'-tilde)[above of=z'1]{$\scriptstyle(g,\widetilde{h}')$};

                \node(lambda0)[above of=c,yshift=1.5cm]{};
                \node(lambda1)[above of=c,yshift=0.5cm]{};
                \path[->,draw=black] (lambda0) -- (lambda1);
                \node(lambda)[above of=c,yshift=1cm,xshift=3mm]{$\lambda$};
            \end{tikzpicture}
            \caption{Different choices of $z_0$ and $z_1$ lead to different right inverses $\Sigma$, but they are all naturally isomorphic.}\labell{f:choices}
        \end{figure}

        Since $\widetilde{p}$ and $\widetilde{p}'b_0$ both start at $z_0$ and lift $p$, let $\widetilde{\Phi}$ be the homotopy $\rel\;\{0\}$ from $\widetilde{p}$ to $\widetilde{p}'b_0$ equal to the lift of the trivial homotopy from $p$ to itself via \cref{l:lifting}.  There is a path $h_s\in H$ for which $\widetilde{\Phi}(s,1)=\widetilde{p}(1)h_s^{-1}$. 
        In particular, $\widetilde{p}(1)h^{-1}_1=\widetilde{z}'b_0$, and by principality, $h_1=(b_0)^{-1}\widetilde{h}'b_1\widetilde{h}^{-1}$.  Thus, 
        $$                                     \left[\overline{\left(g,\widetilde{h}\right)},\widetilde{p}\right]=\left[\overline{\left(g,h_1\widetilde{h}\right)},\widetilde{p}'b_0\right]=\left[\overline{\left(g,(b_0)^{-1}\widetilde{h}'b_1\right)},\widetilde{p}'b_0\right].
        $$
        Expanding the term on the right-hand side and moving factors around, we get:
        $$\left[\overline{\left(g,\widetilde{h}'\right)},\widetilde{p}'\right]=\left[\overline{\left(1_G,b_0\right)},z'_0\right]\left[\overline{\left(g,\widetilde{h}\right)},\widetilde{p}\right]\left[\overline{\left(1_G,b_1\right)^{-1}},\widetilde{z}\right].$$

        We conclude that $x_K\mapsto\left(\left[\widebar{1_G},b\right]\colon z'_{\Gamma_{z'}}\to z_{\Gamma_z}\right)$ is a natural isomorphism $\Sigma'\Rightarrow\Sigma$, proving \cref{i:right inverse choices}.
    \end{proof}

    We are now ready to consider our main goal of this section:  coefficient systems and how they behave with respect to the constructions of \cref{s:act gpds}. 
    
    \begin{definition}[Coefficient System]\labell{d:coeff system}
        Let $X$ be a $G$-manifold.  A \textbf{coefficient system} on $X$ is a contravariant functor $\calA$ from $\Pi(G\ltimes X)$  to the category of abelian groups $\calA\colon\Pi(G\ltimes X)\to\Ab$.  
    \end{definition}

    \begin{proposition}[Pullback Coefficient System]\labell{p:pullback coeff system}
        Given a functor $F\colon\Pi(G\ltimes X)\to\Pi(H\ltimes Y)$ and a coefficient system $\calA$ on $\Pi(H\ltimes Y)$, there is a \textbf{pullback coefficient system} $F^*\calA$ on $\Pi(G\ltimes X)$ defined by $F^*\calA:=\calA\circ F$.  In particular, given an equivariant functor $\varphi\colon G\ltimes X\to H\ltimes Y$, there is a pullback coefficient system $\varphi^*\calA$ on $\Pi(G\ltimes X)$ defined by $\varphi^*\calA=\calA\circ\Pi\varphi$.
    \end{proposition}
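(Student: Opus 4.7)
The statement is essentially a formal consequence of the fact that composition of functors is a functor, with variance bookkeeping. My plan is to verify that $F^*\calA := \calA \circ F$ satisfies the definition of a coefficient system in \cref{d:coeff system}, and then to specialise to the case $F = \Pi\varphi$ using functoriality of $\Pi$ already established in \cref{p:pi functors} and \cref{c:pi functors}.

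First, I would observe that $F\colon\Pi(G\ltimes X)\to\Pi(H\ltimes Y)$ is a covariant functor and $\calA\colon\Pi(H\ltimes Y)\to\Ab$ is contravariant, so the composite $\calA\circ F$ is a contravariant functor $\Pi(G\ltimes X)\to\Ab$. Concretely, on objects $x_K$ of $\Pi(G\ltimes X)$ we set $(F^*\calA)(x_K) := \calA(F(x_K))$, and on an arrow $\alpha\colon x_K\to y_L$ of $\Pi(G\ltimes X)$ we set $(F^*\calA)(\alpha) := \calA(F(\alpha))\colon\calA(F(y_L))\to\calA(F(x_K))$. Functoriality is immediate: identities are preserved because $F$ and $\calA$ preserve identities, and if $\alpha,\beta$ are composable arrows in $\Pi(G\ltimes X)$, then
\[
(F^*\calA)(\beta\alpha) = \calA(F(\beta\alpha)) = \calA(F(\beta)F(\alpha)) = \calA(F(\alpha))\,\calA(F(\beta)) = (F^*\calA)(\alpha)\,(F^*\calA)(\beta),
\]
where the penultimate equality uses the contravariance of $\calA$.

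For the second part of the statement, I would simply apply the first part to $F := \Pi\varphi$. By \cref{p:pi functors}, $\Pi\varphi\colon\Pi(G\ltimes X)\to\Pi(H\ltimes Y)$ is a well-defined covariant functor, so the general construction above produces the coefficient system $\varphi^*\calA = \calA\circ\Pi\varphi$ on $\Pi(G\ltimes X)$. Nothing further needs to be checked.

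There is no real obstacle here; the proposition is essentially a definition/observation, and the work is entirely packaged in the earlier results establishing that $\Pi$ is a functor and that $\calA$ is contravariant. The only point to be careful about is tracking variance in the composition, but that is routine.
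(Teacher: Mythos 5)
Your proposal is correct and follows exactly the same route as the paper, which simply notes that the first claim is immediate and the second follows from the functoriality of $\Pi\varphi$ established in \cref{p:pi functors} and \cref{c:pi functors}; you have merely spelled out the variance bookkeeping that the paper leaves implicit.
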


    \begin{proof}
        The first statement is immediate, and the second follows from the fact that $\Pi\varphi$ is a covariant functor (\cref{c:pi functors}).
    \end{proof}

    \begin{definition}[Category of Coefficient Systems]\labell{d:coeff system categ}
        Given an action groupoid $G\ltimes X$, the corresponding \textbf{category of coefficient systems} is the functor category $\Coeff(G\ltimes X):=\Fun(\Pi(G\ltimes X)^{\text{op}},\Ab)$ whose objects are coefficient systems given by contravariant functors $\calA\colon\Pi(G\ltimes X)\to\Ab$ and whose arrows are natural transformations.
    \end{definition}

    It is now straightforward to check:

    \begin{proposition}[$\Coeff$ is a Functor]\labell{p:coeff functor}
        The assignment $\Coeff\colon\ActGpd\to\Cat$ is a contravariant functor sending an action groupoid $G\ltimes X$ to $\Coeff(G\ltimes X)$ and an equivariant functor $\varphi\colon G\ltimes X\to H\ltimes Y$ to $\Coeff(\varphi)\colon\Coeff(H\ltimes Y)\to\Coeff(G\ltimes X)$, defined as precomposition with $\Pi\varphi$.
    \end{proposition}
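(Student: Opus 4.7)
The plan is to unpack the definitions and observe that everything reduces to the covariant functoriality of $\Pi\colon\ActGpd\to\Cat$ already established in \cref{c:pi functors}, together with the standard fact that precomposition with a functor between small categories gives a functor between the corresponding functor categories (contravariantly in the functor precomposed).

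First I would verify that $\Coeff(\varphi)$ is itself a functor $\Coeff(H\ltimes Y)\to\Coeff(G\ltimes X)$. On objects, given a coefficient system $\calA\colon\Pi(H\ltimes Y)^{\text{op}}\to\Ab$, the composite $\calA\circ(\Pi\varphi)^{\text{op}}$ is again a contravariant functor to $\Ab$ since $\Pi\varphi$ is a covariant functor by \cref{p:pi functors}, hence an object of $\Coeff(G\ltimes X)$; this is exactly the pullback coefficient system $\varphi^*\calA$ of \cref{p:pullback coeff system}. On arrows, given a natural transformation $\eta\colon\calA\Rightarrow\calB$ in $\Coeff(H\ltimes Y)$, the whiskered natural transformation $\eta\ast(\Pi\varphi)^{\text{op}}\colon\varphi^*\calA\Rightarrow\varphi^*\calB$, whose component at an object $x_K$ of $\Pi(G\ltimes X)$ is $\eta_{\Pi\varphi(x_K)}$, gives a natural transformation in $\Coeff(G\ltimes X)$. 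That this assignment preserves identities and vertical composition of natural transformations is immediate from the corresponding properties of $\eta$.

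Next I would verify the contravariant functoriality of $\Coeff$ itself. For the identity equivariant functor $\id_{G\ltimes X}$, \cref{c:pi functors} gives $\Pi(\id_{G\ltimes X})=\id_{\Pi(G\ltimes X)}$, so precomposition is the identity and $\Coeff(\id_{G\ltimes X})=\id_{\Coeff(G\ltimes X)}$. For composable equivariant functors $\varphi\colon G\ltimes X\to H\ltimes Y$ and $\psi\colon H\ltimes Y\to K\ltimes Z$, \cref{c:pi functors} gives $\Pi(\psi\circ\varphi)=\Pi\psi\circ\Pi\varphi$, and precomposition reverses the order of composition, yielding $\Coeff(\psi\circ\varphi)=\Coeff(\varphi)\circ\Coeff(\psi)$ on both objects and arrows of $\Coeff(K\ltimes Z)$.

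There is no real obstacle here: the content of the proposition is bookkeeping built on top of \cref{c:pi functors}, and everything is a direct unraveling of the definitions of the functor category $\Fun(\Pi(-)^{\text{op}},\Ab)$ and of precomposition. The only thing worth emphasising is the contravariance, which arises precisely because $\Pi$ is covariant on $\ActGpd$ while $\Coeff$ is defined as a functor category into $\Ab$ contravariantly in the indexing category.
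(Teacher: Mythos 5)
Your proposal is correct and takes exactly the route the paper intends: the paper gives no written proof, stating only that the claim "is now straightforward to check" after \cref{c:pi functors}, and your argument is precisely that verification (precomposition with $\Pi\varphi$ gives a functor between the functor categories, and functoriality of $\Pi$ yields preservation of identities and, contravariantly, of composition). Nothing is missing.
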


    \begin{remark}\labell{r:coeff functor}
        In fact, $\Coeff$ can most likely be promoted to a pseudofunctor from action groupoids to bicategories, but possibly where the arrows of $\Coeff(G\ltimes X)$ are pseudonatural transformations for a given $G\ltimes X$.  The main issue with this being strict is that smooth natural transformations of $\ActGpd$ are sent to \emph{pseudo}natural transformations in \cref{t:pi nat transf}.  This goes beyond our purposes, however, and so we leave this for future work.
    \end{remark}

    We now wish to look at how coefficient systems can be moved across a bibundle.   By \cref{p:pullback coeff system}, we know that if we have a bibundle $Z$ between $X$ and $Y$, we can pull back a coefficient system on $Y$ to one on $Z$.   We now look at how we can also push it forward  to a coefficient system on $X$.  

    \begin{proposition}[Pushforward Coefficient System]\labell{p:pushforward coeff system}
        Given compact Lie groups $G$ and $H$, a bibundle $G\ltimes X\ot{\lambda} Z\uto{\rho} H\ltimes Y$, and a coefficient system $\calA$ on $\Pi((G\times H)\ltimes Z)$, there is a \textbf{pushforward coefficient system} $\lambda_*\calA$ on $\Pi(G\ltimes X)$ such that $\lambda^*\lambda_*\calA$ is naturally isomorphic to $\calA$.  Moreover, any two such pushforward coefficient systems are naturally isomorphic.
    \end{proposition}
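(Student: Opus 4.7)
The plan is to define $\lambda_*\calA := \calA \circ \Sigma$ for any right inverse $\Sigma \colon \Pi(G\ltimes X) \to \Pi((G\times H)\ltimes Z)$ of $\Pi\lambda$, whose existence is guaranteed by \cref{p:right inverse}. Since $\Sigma$ is covariant and $\calA$ is contravariant, $\lambda_*\calA$ is a coefficient system on $G\ltimes X$. By \cref{p:pullback coeff system}, $\lambda^*\lambda_*\calA = \calA \circ \Sigma \circ \Pi\lambda$, so it suffices to produce a natural isomorphism $\eta \colon \id_{\Pi((G\times H)\ltimes Z)} \Rightarrow \Sigma \circ \Pi\lambda$; applying the contravariant $\calA$ to $\eta$ then yields $\lambda^*\lambda_*\calA \Rightarrow \calA$.

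The components of $\eta$ are built from the data of \cref{l:gamma}. Every object of $\Pi((G\times H)\ltimes Z)$ has the form $z_L$ with $L = \Gamma^K_z$ for $K = \pr_1(L)$, since uniqueness in \cref{l:gamma} forces $L$ to be the graph of $\zeta_z|_K$. Let $z'$ be the lift of $\lambda(z)$ used to define $\Sigma$. By \cref{l:gamma} and \cref{r:gamma}, there is a unique $b \in H$ with $z' = zb^{-1}$ and $\Gamma^K_{z'} = (1_G, b)\Gamma^K_z(1_G, b^{-1})$. I define
\[
\eta_{z_L} := \left[\overline{(1_G, b^{-1})},\, z\right]\colon z_L \longrightarrow z'_{\Gamma^K_{z'}} = \Sigma\circ\Pi\lambda(z_L),
\]
where $z$ denotes the constant path at $z$. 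The conjugation identity above gives the required inclusion of subgroups as equality, and the endpoint computation $(1_G, b^{-1}) \cdot z' = z'b = z$ confirms that $\eta_{z_L}$ is well-defined. The arrow $[\overline{(1_G, b)}, z']$ is a two-sided inverse under the composition rule of \cref{d:fund-gpd}, so each $\eta_{z_L}$ is an isomorphism.

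To verify naturality against an arrow $\phi = [\overline{(g, h)}, p]\colon z_L \to w_M$, I first identify $\Sigma\Pi\lambda(\phi)$ using the construction in the proof of \cref{p:right inverse}: the $H$-translate $pb_z^{-1}$ lies in $Z^{\Gamma^K_{z'}}$, starts at $z'$, and ends at $gw'(b_z h b_w^{-1})^{-1}$, giving
\[
\Sigma\Pi\lambda(\phi) = \left[\overline{(g,\, b_z h b_w^{-1})},\, pb_z^{-1}\right].
\]
Both composites $\Sigma\Pi\lambda(\phi) \circ \eta_{z_L}$ and $\eta_{w_M} \circ \phi$ then collapse, via the composition rule in \cref{d:fund-gpd} together with the identity $(1_G, b_z^{-1}) \cdot (pb_z^{-1}) = p$ and absorption of constant paths into their neighbours, to the common class $[\overline{(g, hb_w^{-1})}, p]$. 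Hence $\eta$ is a natural isomorphism and the first assertion follows.

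For the final claim, two choices $\Sigma, \Sigma'$ of right inverse give coefficient systems $\lambda_*\calA = \calA\Sigma$ and $\lambda'_*\calA = \calA\Sigma'$, and \cref{p:right inverse} supplies a natural isomorphism $\Sigma \Rightarrow \Sigma'$ whose image under the contravariant $\calA$ is a natural isomorphism $\lambda'_*\calA \Rightarrow \lambda_*\calA$. The only real technical point is the naturality computation for $\eta$, which requires careful bookkeeping of the bundle-division elements $b_z$ and $b_w$; once \cref{l:gamma,p:right inverse} are in hand, the manipulations parallel the natural-isomorphism verification already carried out in the proof of \cref{p:right inverse}.
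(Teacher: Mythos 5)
Your proposal is correct and takes essentially the same approach as the paper: you define $\lambda_*\calA=\Sigma^*\calA$ for a right inverse $\Sigma$ from \cref{p:right inverse}, use the arrows $\left[\overline{(1_G,b^{-1})},z\right]\colon z_{\Gamma_z}\to z'_{\Gamma_{z'}}$ as the components of the comparison with $\calA$, and invoke \cref{p:right inverse} again for independence of the choice of $\Sigma$. The only difference is cosmetic: you verify naturality upstairs in $\Pi((G\times H)\ltimes Z)$ by computing $\Sigma\Pi\lambda(\phi)$ with the explicitly translated lift $pb_z^{-1}$ (legitimate by the lift-independence established in the proof of \cref{p:right inverse}), whereas the paper carries out the equivalent bookkeeping after applying $\calA$, comparing an arbitrary lift via the lifted homotopy of \cref{l:lifting}.
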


    \begin{proof}
        Fix a choice of right inverse $\Sigma$ of $\Pi\lambda$, and define $\lambda_*\calA:=\Sigma^*\calA$.  Given a different choice $\Sigma'$ of right inverse, by \cref{p:right inverse}, there is a natural isomorphism $T\colon\Sigma\Rightarrow\Sigma'$ which induces is a natural isomorphism $\calA T\colon\Sigma^*\calA\Rightarrow(\Sigma')^*\calA$.  This proves the second statement.

        We need to show that $\lambda^*\lambda_*\calA$ is naturally isomorphic to $\calA$.  Fix a closed subgroup $L\leq G\times H$ and $z\in Z^L$.  Let $x:=\lambda(z)$ and $K:=\pr_1(L)$.  Then $x\in X^K$ and for any $(k,h)\in L$, $kz\zeta_z(k)^{-1}=z=kzh^{-1}$ where $\zeta_z$ is as in \cref{i:gamma} of \cref{l:gamma} and \cref{r:gamma}.  By principality, $h=\zeta_z(k)$, from which it follows that $L=\Gamma_z$.  Thus $$\lambda^*\lambda_*\calA(z_L)=\lambda^*\lambda_*\calA(z_{\Gamma_z})=\lambda_*\calA(x_K)=\calA(z'_{\Gamma_{z'}})$$ 
        for some $z'\in\lambda^{-1}(\lambda(z))$.  There is a unique $b\in H$ such that $z'=zb^{-1}$, and hence an invertible arrow $\left[\overline{(1_G,b^{-1})},z\right]\colon z_{\Gamma_{z}}\to z'_{\Gamma_{z'}}$ in $\Pi((G\times H)\ltimes Z)$, which gives an invertible homomorphism $$\calA\left(\left[\overline{(1_G,b^{-1})},z\right]\right)\colon\lambda^*\lambda_*\calA( z_{\Gamma_{z}})\to \calA(z_{\Gamma_{z}}).$$  We will show that these homomorphisms form a natural isomorphism. 

        To check naturality, we consider  a situation similar to that in \cref{f:choices}: suppose for $i=0,1$ we have objects $(z_i)_{L_i}$ in $\Pi((G\times H)\ltimes Z)$ with $x_i:=\lambda(z_i)\in X$, and $\left[\overline{(g,h)},\widetilde{p}\right]\colon(z_0)_{\Gamma_{z_0}}\to(z_1)_{\Gamma_{z_1}}$ is an arrow in $\Pi((G\times H)\ltimes Z)$ sent to $[\widebar{g},p]\colon(x_0)_{K_0}\to(x_1)_{K_1}$ by $\Pi\lambda$.   Then  $$\lambda^*\lambda_*\calA\left[\overline{(g,h)},\widetilde{p}\right]=\lambda_*\calA\left[\widebar{g},p\right]=\calA\left[\overline{(g,h')},\widetilde{p}'\right]$$ for some lift $\widetilde{p}'$ of $p$ starting at $z_0'\in\lambda^{-1}(x_0)$ and ending at $\widetilde{z}'=gz_1'(h')^{-1}$.  For $i=0,1$, by principality, there exist unique $b_i\in H$ such that $z_i'=z_i(b_i)^{-1}$.  Then $\widetilde{p}'b_0$ is a lift of $p$ starting at $z_0$ and ending at $\widetilde{z}'b_0$.  Let $\Phi$ be an homotopy $\rel\,\{0\}$ from $\widetilde{p}$ to $\widetilde{p}'b_0$, defined via  a lift  of the trivial homotopy sending $p$ to itself guaranteed by \cref{l:lifting}.  By principality there is a path $h_t$ in $H$ starting at $1_H$ such that $\Phi(t,1)=\widetilde{p}(1)h_t^{-1}$ for all $t\in[0,1]$.  Since $$\widetilde{z}'b_0=gz_1'(h')^{-1}b_0=gz_1(b_1)^{-1}(h')^{-1}b_0=\widetilde{p}(1)h(b_1)^{-1}(h')^{-1}b_0,$$ we have $$\widetilde{p}(1)h_1^{-1}=\widetilde{z}'b_0=\widetilde{p}(1)h(b_1)^{-1}(h')^{-1}b_0.$$  By principality, $$h_1=(b_0)^{-1}h'b_1h^{-1}.$$  It now follows that 
        \begin{align*}
        \left[\overline{(1_G,(b_0)^{-1})},z_0\right]\left[\overline{(g,h')},\widetilde{p}'\right] =&~ \left[\overline{(g,h_1h)},\widetilde{p}'b_0\right]\left[\overline{(1_G,(b_1)^{-1})},z_1\right] \\
        =&~ \left[\overline{(g,h)},\widetilde{p}\right]\left[\overline{(1_G,(b_1)^{-1})},z_1\right].
        \end{align*}
        We have the following commutative diagram:
        $$\xymatrix{
        \lambda^*\lambda_*\calA((z_0)_{L_0}) \ar@{=}[d] & &\lambda^*\lambda_*\calA((z_1)_{L_1}) \ar[ll]_{\lambda^*\lambda_*\calA\left[\overline{(g,h)},\widetilde{p}\right]} \ar@{=}[d] \\
        \calA((z'_0)_{\Gamma_{z'_0}}) \ar[d]_{\calA\left[\overline{(1_G,(b_0)^{-1})},z_0\right]} & & \calA((z'_1)_{\Gamma_{z'_1}}) \ar[d]^{\calA\left[\overline{(1_G,(b_1)^{-1})},z_1\right]} \ar[ll]_{\calA\left[\overline{(g,h')},\widetilde{p}'\right]} \\
        \calA((z_0)_{\Gamma_{z_0}}) & & \calA((z_1)_{\Gamma_{z_1}}) \ar[ll]^{\calA\left[\overline{(g,h)},\widetilde{p}\right]} \\
        }$$
        where, as above, $L_i=\Gamma_{z_i}$ for $i=0,1$.  This proves naturality.
    \end{proof}

   We now revisit our  examples from the previous sections to illustrate our results.  

    \begin{example}\labell{x:ab2}
        We revisit the equivalence of \cref{x:ab} between Groupoids A and B,  and the fundamental groupoids of \cref{x:a2} and \cref{x:b2}.  Consider the coefficient system on Groupoid A,  $\calA\colon\Pi(\ZZ/2\ltimes\SS^1)\to\Ab$ sending all objects to the trivial abelian group $0$ except for the $S_{\ZZ/2}$, which is sent to $\ZZ$, and all non-identity arrows are sent to trivial homomorphisms.  Note that this coefficient system cannot be formed in the setting of ordinary Bredon-Illman cohomology in the sense that it does not factor through the orbit category.

        The bibundle $P_\varphi$ of \cref{x:ab} has inverse bibundle 
        $$P_\varphi^{-1}=\left(\U(1)\ltimes (\U(1)\times_{\ZZ/2}\SS^1)\ot{\lambda}\U(1)\times\SS^1\overset{\simeq}{\uto{\rho}}\ZZ/2\ltimes \SS^1\right)$$  We can pull back $\calA$  along $\rho$ to $\rho^*\calA$ defined on $\Pi\left((\U(1)\times\ZZ/2)\ltimes(\U(1)\times\SS^1)\right)$, and then push forward along $\lambda$ to $\lambda_*\rho^*\calA$ defined on $\Pi\left(\U(1)\ltimes (\U(1)\times_{\ZZ/2}\SS^1)\right)$.   This results in the following coefficient system for Groupoid B:
        \begin{itemize}
            \item To any object $[e^{i\alpha},e^{i\beta}]_{\langle1\rangle}$ choose $(e^{i\alpha},e^{i\beta})$ in $Z$, which is assigned $\calA((e^{i\beta})_{\langle1\rangle})=0$.
            \item To any object $[e^{i\alpha},N]_{\langle-1\rangle}$ choose $(e^{i\alpha},N)$ in $Z$, which is assigned $\calA(N_{\ZZ/2})=0$.
            \item To any object $[e^{i\alpha},S]_{\langle-1\rangle}$ choose $(e^{i\alpha},S)$ in $Z$, which is assigned $\calA(S_{\ZZ/2})=\ZZ$.
            \item All non-identity arrows are sent to trivial homomorphisms.
        \end{itemize}
        By \cref{p:pushforward coeff system}, $\lambda^*\lambda_*\rho^*\calA$ is naturally isomorphic to $\rho^*\calA$.
    \end{example}

    \begin{example}\labell{x:ac2}
        We revisit the equivalence of \cref{x:ac} between Groupoids A and C, and the fundamental groupoids of \cref{x:a2} and \cref{x:c2}.  Starting from the coefficient system $\calA$ on Groupoid A of  \cref{x:ab2},  the pullback coefficient system $\psi^*\calA$ assigns $0$ to all objects of $\Pi(D_4\ltimes\SS^1)$ except for $N_{\ZZ/2\times 1}$ and $S_{\ZZ/2\times 1}$, which are sent to $\ZZ$, and all  non-identity arrows are sent to trivial homomorphisms except for those going between these two objects, which are sent to $\id_\ZZ$.  This and \cref{x:ab2} show how using the fundamental groupoid and twisted coefficients, instead of the orbit category as in ordinary Bredon-Illman cohomology, addresses the issues brought up in \cite[Example 5.3]{PS:translation}.
    \end{example}

    \begin{example}\labell{x:d4}
        Consider Groupoid D from \cref{x:d1} and its fundamental groupoid of \cref{x:d3}. Let $\calR_n$ be the coefficient system on $\Pi(\SO(n)\ltimes\SS^n)$ defined by sending an object $x_H$ to $\Rep(H)$, the representation ring of the subgroup $H$, and an arrow $[\widebar{g},x]\colon x_H\to (g^{-1}\cdot x)_{g^{-1}Hg}$ to precomposition of the representation by conjugation by $g^{-1}$, and an arrow $[\widebar{1},p]\colon x_H\to y_K$ (whose existence implies $H\leq K$) to the restriction of the $K$-representation $\calR_n(y_K)$ to the corresponding $H$-representation $\calR_n(x_H)$.  Since representation rings can distinguish between the groups $\SO(n)$, it follows that $\calR_n$ and $\calR_m$ are equivalent if and only if $n=m$.  
    \end{example}

\section{Twisted Bredon-Illman Cohomology}\labell{s:bredon-illman}

    In this section, we define twisted Bredon-Illman cohomology and prove our long-promised main result, that the resulting theory is Morita-invariant in \cref{t:bredon-illman}.   The setup is similar to that of singular cohomology, but with an equivariant twist: the coefficient systems are defined on the fundamental groupoid.  We use similar language and notation as that in \cite[Section 3]{MuMu} to define it, note in \cite{MuMu} that the cohomology is referred to as Bredon-Illman cohomology with local coefficients. 

    \begin{definition}[Equivariant Singular Simplices]\labell{d:simplex}
        Let $\Delta_n=\langle d_0,\dots, d_n\rangle$ be the standard $n$-simplex with vertices $d_i$, and let $f_j\colon\Delta_{n-1}\to\Delta_n$ be the $j$th face operator, sending $d_i$ to $d_i$ for $i<j$ and to $d_{i+1}$ for $i\geq j$.
        
        Given a $G$-manifold $X$, an \textbf{equivariant (singular) $n$-simplex of $X$} is a $G$-equivariant map $\sigma\colon\Delta_n\times G/H\to X$ for some closed subgroup $H\leq G$.

        Given a face map $f_j$ of $\Delta_n$ and an equivariant $n$-simplex $\sigma\colon\Delta_n\times G/H\to X$, let  $\sigma^{(j)}$ denote the $(n-1)$-simplex $\Delta_{n-1}\times G/H\to X$ given by the composition $\sigma\circ(f_j\times\id_{G/H})$, called the \textbf{$j$th face of $\sigma$}.
\end{definition}

We can use evaluation on basepoints to project these simplices into the fundamental groupoid $\Pi(G \ltimes X)$.  

\begin{definition}[Face Maps and the Fundamental Groupoid]\labell{d:face maps fund gpd}
    An equivariant $n$-simplex $\sigma\colon\Delta_n\times G/H\to X$ induces a corresponding object of $\Pi(G\ltimes X)$ given by evaluation on the basepoint, $\sigma_H:=\sigma(d_0,H)_H\colon G/H\to X$.   
    
    We can also create an arrow of $\Pi(G \ltimes X)$, $[\sigma^{(j)}_H]\colon \sigma_H \to \sigma^{(j)}_H$ corresponding to a face map $f_j$ in the following way.    
    For $j>0$, the $j$th face map preserves the basepoint $d_0$.    Therefore we define the arrow $[\sigma^{(j)}_H] $ to be the identity  arrow of $\Pi(G \ltimes X)$ defined by 
    $$[\sigma^{(j)}_H]:=[\widebar{1_G},\sigma(d_0,H)]\colon\sigma_H\to \sigma^{(j)}_H$$ where $\sigma(d_0,H)$ denotes the constant path at $\sigma(d_0,H)$. 

    For $j=0$, the face map deletes the basepoint and so we need to create a non-constant path from the basepoint of $\sigma$ to the basepoint of $\sigma^{(0)}$.   The basepoint of $\sigma^{(0)}$ is $d_1$ in the original simplex, so $\sigma^{(0)}_H = \sigma(d_1, H)_H $ and we define  $[\sigma^{(0)}_H]$  to be the arrow of $\Pi(G\ltimes X)$ given by $$[\sigma^{(0)}_H]:=[\widebar{1_G},p]\colon\sigma_H \to \sigma^{(0)}_H$$ where $p$ is the interpolation $[0,1]\to X\colon t\mapsto\sigma((1-t)d_0+td_1,H)$.
\end{definition}

We recall that the fundamental groupoid $\Pi(G \ltimes X)$ allows us to twist continuously within orbits.    We want to define the same sort of twisting for simplices. 

    \begin{definition}[Orbit Twist Maps and Compatibility]\labell{d:orbit twist}
         An {\bf orbit twist map} is a continuous equivariant map $a\colon\Delta_n\times G/H\to\Delta_n\times G/K$ that  preserves the coordinate $u$ of $\Delta_n$.  Explicitly,    $\pr_1\circ a(u,gH)=u$ for all $u\in\Delta_n$.  Given two equivariant $n$-simplices $\sigma\colon \Delta_n\times G/H\to X$ and $\tau\colon\Delta_n\times G/K\to X$ and an orbit twist map $a$, we say  $\sigma$ and $\tau$ are \textbf{$a$-compatible} if $\sigma=\tau\circ a.$
    \end{definition}
         
An orbit twist map  $a$ induces a family of $G$-equivariant maps indexed by $u \in \Delta_n$,  $\widebar{a}_u\colon G/H\to G/K$ given by $\widebar{a}_u(gH)=\pr_2\circ a(u,gH)$.  Then if we set $\widebar{a}_0:=\widebar{a}_{d_0}$, if $\sigma, \tau$ are $a$-compatible then  $\sigma_H=\tau_K\circ\widebar{a}_0$ and $a$ induces an arrow of $\Pi(G\ltimes X)$ with a constant path  $[\widebar{a}_0,\sigma(d_0,H)]:  \sigma_H \to \tau_K$. 

    \begin{definition}[Twisted Bredon-Illman Cohomology]\labell{d:cochains}
        Given a $G$-manifold $X$ and a coefficient system $\calA$, let $C^n_\BI(G\ltimes X;\calA)$ be the group of all functions $c$ defined on equivariant $n$-simplices taking values  $c(\sigma)\in\calA(\sigma_H)$ for any  equivariant $n$-simplex $\sigma\colon\Delta_n\times G/H\to X$.  Let $S^n_\BI(G\ltimes X;\calA)$ be the subgroup of $C^n_\BI(G\ltimes X;\calA)$ such that for any orbit twist map $a$ and $a$-compatible equivariant $n$-simplices $\sigma\colon\Delta_n\times G/H\to X$ and $\tau\colon\Delta_n\times G/K\to X$,  $$c(\sigma)=\calA([\widebar{a}_0,\sigma(d_0,H)])c(\tau);$$ this is the \textbf{group of equivariant (singular) $n$-cochains of $X$}.

        Let $\delta\colon S^n_\BI(G\ltimes X; A)\to S^{n+1}_\BI(G\ltimes X;\calA)$ be the \textbf{boundary homomorphism}, given by $$\delta c(\sigma):=\sum_{j=0}^{n+1}(-1)^j  \calA\left([\sigma_H^{(j)}]\right) c(\sigma^{(j)}).$$  Recall that $[\sigma^{(j)}]$ is defined to be the identity map $[\widebar{1_G},\sigma(d_0,H)]$ for $j>0$, and so this formula simplifies to $$\delta c(\sigma):=\calA\left([\sigma_H^{(0)}]\right)c(\sigma^{(0)})+\sum_{j=1}^{n+1}(-1)^jc(\sigma^{(j)}).$$
        Then $\delta^2=0$, resulting in the \textbf{twisted Bredon-Illman cochain complex} $(S^\bullet_\BI(G\ltimes X;\calA),\delta)$ and associated \textbf{twisted Bredon-Illman cohomology groups} $H^\bullet_\BI(G\ltimes X;\calA)$.  See \cite{illman,MuMu} for details.
    \end{definition}

    \begin{remark}\labell{r:cochains}
        Let $G\ltimes X$ be a $G$-space, $H\leq G$ a closed subgroup, $x\in X$, and $g\in G$.  Observe that that the arrow $[\widebar{g},x]\colon x_H\to (g^{-1}x)_{g^{-1}Hg}$ induces an orbit twist map $$\widecheck{g}:=\id_{\Delta_k}\times\widebar{g}\colon\Delta_k\times G/H\to\Delta_k\times G/(g^{-1}Hg)\colon (u,g'H)\mapsto(u,g'gg^{-1}Hg)=(u,g'Hg).$$  Thus, given a $k$-simplex $\sigma\colon\Delta_k\times G/H\to X$, there is a corresponding $k$-simplex $\sigma^g\colon\Delta_k\times G/(g^{-1}Hg)\to X$ that is $\widecheck{g}$-compatible with $\sigma$: $$\sigma=\sigma^g\circ\widecheck{g}.$$  Given a coefficient system $\calA$ and $c\in S^k_\BI(G\ltimes X;\calA)$ we have $$c(\sigma)=\calA([\widebar{g},x])c(\sigma^g).$$
        Thus $c$ respects the action of $G$ on $X$ (and is equivariant in the sense that it also respects the induced action of $G$ on the image of $\calA$ in $\Ab$). Moreover, since $g$ is invertible, the value $c(\sigma^g)$ is determined by $c(\sigma)$.
    \end{remark}  
    
    As remarked in \cite[Page 204]{MuMu}, if each fixed point set $X^K$ is path-connected and the coefficient system is simple (\emph{i.e.} independent of paths), then one obtains the ordinary Bredon-Illman cohomology.  See \cite[Definition 2.5, Proposition 2.6]{MuMu} for details.
   
Next we consider functoriality of our cohomology theory; see \cite[Proposition 3.9]{MuMu} for a similar statement. 

    \begin{proposition}[$H_\BI^\bullet$ is Functorial]\labell{p:pullback BI}
        Given an equivariant functor $\varphi\colon G\ltimes X\to H\ltimes Y$ and a coefficient system $\calA$ on $\Pi(H\ltimes Y)$, there is a pullback homomorphism $\varphi^*\colon H_\BI^\bullet(H\ltimes Y,\calA)\to H_\BI^\bullet(G\ltimes X,\varphi^*\calA)$. 
    \end{proposition}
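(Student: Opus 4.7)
The plan is to define $\varphi^*$ first at the cochain level by a pullback-of-simplex construction, then verify that it preserves the compatible subcomplex $S^\bullet_\BI$, commutes with $\delta$, and therefore descends to cohomology.

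First I would associate to each equivariant $n$-simplex $\sigma\colon\Delta_n\times G/K\to X$ a corresponding equivariant $n$-simplex $\varphi_\#\sigma\colon\Delta_n\times H/\widetilde\varphi(K)\to Y$ on $Y$ by setting $\varphi_\#\sigma(u,h\widetilde\varphi(K)):=h\cdot\varphi(\sigma(u,K))$. Well-definedness follows from $\widetilde\varphi(k)\cdot\varphi(\sigma(u,K))=\varphi(k\cdot\sigma(u,K))=\varphi(\sigma(u,K))$ for $k\in K$, and $H$-equivariance is built in. Since $(\varphi_\#\sigma)_{\widetilde\varphi(K)}=\varphi(\sigma(d_0,K))_{\widetilde\varphi(K)}=\Pi\varphi(\sigma_K)$, for any $c\in S^n_\BI(H\ltimes Y;\calA)$ the value $c(\varphi_\#\sigma)$ lies in $\calA(\Pi\varphi(\sigma_K))=\varphi^*\calA(\sigma_K)$, and I would define $(\varphi^*c)(\sigma):=c(\varphi_\#\sigma)$.

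Next I would show that $\varphi^*c$ lies in $S^n_\BI(G\ltimes X;\varphi^*\calA)$. Given an orbit twist map $a\colon\Delta_n\times G/K\to\Delta_n\times G/L$ with $\widebar{a}_u(K)=g_uL$, and $a$-compatible simplices $\sigma=\tau\circ a$, define an orbit twist map $b\colon\Delta_n\times H/\widetilde\varphi(K)\to\Delta_n\times H/\widetilde\varphi(L)$ by $\widebar{b}_u(h\widetilde\varphi(K)):=h\widetilde\varphi(g_u)\widetilde\varphi(L)$. A direct check using the $\widetilde\varphi$-equivariance of $\varphi$ gives $\varphi_\#\sigma=\varphi_\#\tau\circ b$, and $\Pi\varphi$ sends the arrow $[\widebar{a}_0,\sigma(d_0,K)]$ to $[\widebar{b}_0,\varphi(\sigma(d_0,K))]$. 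The compatibility $c(\varphi_\#\sigma)=\calA([\widebar{b}_0,\varphi(\sigma(d_0,K))])c(\varphi_\#\tau)$ then rewrites, using $\varphi^*\calA=\calA\circ\Pi\varphi$, as the desired identity $(\varphi^*c)(\sigma)=\varphi^*\calA([\widebar{a}_0,\sigma(d_0,K)])(\varphi^*c)(\tau)$.

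Finally I would check that $\delta\circ\varphi^*=\varphi^*\circ\delta$ on cochains. This reduces to two observations: face operators commute with the pullback, $\varphi_\#(\sigma^{(j)})=(\varphi_\#\sigma)^{(j)}$, since $f_j$ acts only on the $\Delta_n$-coordinate; and the basepoint arrows $[\sigma_K^{(j)}]$ are sent by $\Pi\varphi$ to $[(\varphi_\#\sigma)_{\widetilde\varphi(K)}^{(j)}]$, which is automatic for $j\geq 1$ because both are units, and for $j=0$ follows because $\varphi$ applied to the interpolation path $t\mapsto\sigma((1-t)d_0+td_1,K)$ is the corresponding interpolation path for $\varphi_\#\sigma$. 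With these in hand, the alternating-sum formulas for $\delta(\varphi^*c)(\sigma)$ and $(\varphi^*(\delta c))(\sigma)$ match term-by-term. The only mildly delicate point is the bookkeeping needed to show that $\varphi_\#$ interacts correctly with face operators and orbit twist maps when $\widetilde\varphi$ is not surjective; once this is settled, functoriality in $\varphi$ and passage to cohomology are immediate.
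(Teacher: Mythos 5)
Your proposal is correct and follows essentially the same route as the paper: the paper also defines the induced simplex $\varphi_\flat\sigma(u,h\widetilde{\varphi}(K)):=h\varphi(\sigma(u,K))$ (your $\varphi_\#\sigma$), sets $\varphi^*c(\sigma):=c(\varphi_\flat\sigma)$, verifies membership in $S^\bullet_\BI$ by transporting orbit twist maps exactly as you do, and checks $\delta\varphi^*=\varphi^*\delta$ via the same face-map and basepoint-arrow observations (with the $j=0$ interpolation path handled just as you describe). No gaps to report.
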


    To prove this, we need to show how equivariant simplices interact with equivariant functors.  

    \begin{lemma}\labell{l:flat simplices}
        Let $\varphi\colon G\ltimes X\to H\ltimes Y$ be an equivariant functor, and let $\sigma$ be an equivariant $n$-simplex of $G\ltimes X$.  There is an equivariant $n$-simplex $\varphi_\flat\sigma$ of $H\ltimes Y$ making the following diagram commute
        $$\xymatrix{
            \Delta_n\times G/K \ar[r]^{\quad\quad\sigma} \ar[d]_{\id_{\Delta_n}\times\widetilde{\varphi}_K} & X \ar[d]^{\varphi} \\
            \Delta_n\times H/\widetilde{\varphi}(K) \ar[r]_{\quad\quad\varphi_\flat\sigma} & Y \\
        }$$
        where $\widetilde{\varphi}_K\colon G/K\to H/\widetilde{\varphi}(K)$ is the induced equivariant map.
    \end{lemma}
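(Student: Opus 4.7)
The plan is to define $\varphi_\flat\sigma$ using $H$-equivariance, taking $\widetilde{\varphi}(g)\widetilde{\varphi}(K)$-cosets to the image of $K$-cosets pushed forward along $\varphi$, and then extending to all $H$-cosets by the $H$-action on $Y$. Concretely, I would set
$$\varphi_\flat\sigma(u, h\widetilde{\varphi}(K)) \ := \ h\cdot\varphi(\sigma(u,K)),$$
and verify the four required properties: well-definedness, smoothness, $H$-equivariance, and commutativity of the diagram.

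The key observation that makes this work is that $\sigma(u,K)$ is fixed by $K$ for each $u \in \Delta_n$. Indeed, since the $G$-action on $\Delta_n \times G/K$ is trivial on $\Delta_n$, $G$-equivariance of $\sigma$ gives $k\cdot\sigma(u,K) = \sigma(u,kK) = \sigma(u,K)$ for all $k\in K$. Applying $\varphi$ (which is $\widetilde{\varphi}$-equivariant) then shows $\widetilde{\varphi}(k)\cdot\varphi(\sigma(u,K)) = \varphi(\sigma(u,K))$, so the value $\varphi(\sigma(u,K))$ lies in $Y^{\widetilde{\varphi}(K)}$. This immediately yields well-definedness: if $h_1\widetilde{\varphi}(K) = h_2\widetilde{\varphi}(K)$, then $h_2^{-1}h_1 = \widetilde{\varphi}(k)$ for some $k\in K$, and the previous computation gives $h_1\cdot\varphi(\sigma(u,K)) = h_2\cdot\varphi(\sigma(u,K))$.

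For smoothness, I would note that the composite $\Delta_n \times H \to Y$ sending $(u,h) \mapsto h\cdot\varphi(\sigma(u,K))$ is smooth (as the composition of $\sigma$, $\varphi$, and the $H$-action), and is constant on $\widetilde{\varphi}(K)$-cosets in the second factor by the fixed-set observation above. Since $H \to H/\widetilde{\varphi}(K)$ is a surjective submersion, $\varphi_\flat\sigma$ is smooth. The $H$-equivariance is immediate from the formula: $\varphi_\flat\sigma(u, h'h\widetilde{\varphi}(K)) = h'h\cdot\varphi(\sigma(u,K)) = h'\cdot\varphi_\flat\sigma(u,h\widetilde{\varphi}(K))$. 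Finally, commutativity of the diagram is a direct check:
$$\varphi_\flat\sigma\bigl(u, \widetilde{\varphi}_K(gK)\bigr) = \widetilde{\varphi}(g)\cdot\varphi(\sigma(u,K)) = \varphi\bigl(g\cdot\sigma(u,K)\bigr) = \varphi(\sigma(u,gK)),$$
using $\widetilde{\varphi}$-equivariance of $\varphi$ and $G$-equivariance of $\sigma$.

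There is no real obstacle here; the proof is essentially forced by the universal property of the quotient $H/\widetilde{\varphi}(K)$, and the only subtlety is verifying that $\sigma(u,K)$ lies in $X^K$ so that $\varphi(\sigma(u,K)) \in Y^{\widetilde{\varphi}(K)}$, which ensures the formula descends to the quotient.
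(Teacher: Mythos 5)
Your proposal is correct and uses the same construction as the paper: both define $\varphi_\flat\sigma(u,h\widetilde{\varphi}(K)):=h\cdot\varphi(\sigma(u,K))$, exploiting that $\sigma$ is determined by its restriction to $\Delta_n\times\{K\}$ and that $\varphi(\sigma(u,K))$ is $\widetilde{\varphi}(K)$-fixed. Your write-up simply spells out the well-definedness, continuity, equivariance, and commutativity checks that the paper leaves implicit.
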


    \begin{proof}
        The homomorphism $\widetilde{\varphi}\colon G\to H$ induces a ($G$-$H$)-equivariant map $\widetilde{\varphi}_K\colon G/K\to H/\widetilde{\varphi}(K)$ sending $gK$ to $\widetilde{\varphi}(g)\widetilde{\varphi}(K)$.  Since $\sigma$ is completely determined by its restriction to $\Delta_n\times\{K\}$, we may define $$\varphi_\flat\sigma(u,h\widetilde{\varphi}(K)):=h\varphi(\sigma(u,K)).$$  The diagram in the statement of the lemma now commutes.
    \end{proof}

    We now prove \cref{p:pullback BI}.

    \begin{proof}[Proof of \cref{p:pullback BI}]
        Fix $c\in S_\BI^n(H\ltimes Y,\calA)$.  Define $\varphi^*c$ on equivariant $n$-simplices of $G\ltimes X$ by $$\varphi^*c(\sigma\colon\Delta_n\times G/K\to X):=c(\varphi_\flat \sigma\colon\Delta_n\times H/\widetilde{\varphi}(K)\to Y).$$   By construction, $\varphi^*$ is linear. Since $$\varphi^*c(\sigma)\in\calA((\varphi_\flat\sigma)_{\widetilde{\varphi}(K)})=\varphi^*\calA(\sigma_K),$$ $\varphi^*c$ is a well-defined element of $C_\BI^n(G\ltimes X;\varphi^*\calA)$.

        We first check that $\phi^*c$ is an equivariant $n$-cochain.   Suppose we have  another equivariant $n$-simplex of $X$, $\sigma'\colon\Delta_n\times G/K'\to X$  that is $a$-compatible to $\sigma$ for an orbit twist map $a$,  so $\sigma=\sigma'\circ a$.  Define $\varphi_\flat a\colon\Delta_n\times H/\widetilde{\varphi}(K)\to\Delta_n\times G/\widetilde{\varphi}(K')$ by $$\varphi_\flat a(u,h\widetilde{\varphi}(K)):=(u,ha_u\widetilde{\varphi}(K')).$$  The following diagram commutes:
        $$\xymatrix{
            \Delta_n\times G/K \ar[rr]^{a} \ar[ddd]_{\id_{\Delta_n}\times\widetilde{\varphi}_K} \ar[dr]_{\sigma} & & \Delta_n\times G/K' \ar[ddd]^{\id_{\Delta_n}\times\widetilde{\varphi}_{K'}} \ar[dl]^{\sigma'} \\
             & X \ar[d]^{\varphi} & \\
             & Y & \\
            \Delta_n\times H/\widetilde{\varphi}(K) \ar[ur]^{\varphi_\flat\sigma} \ar[rr]_{\varphi_\flat a} & & \Delta_n\times H/\widetilde{\varphi}(K'). \ar[ul]_{\varphi_\flat\sigma'} \\
        }$$
        In particular, $\varphi_\flat a$ preserves $\Delta_n$-coordinates and so $\varphi_\flat a$ is an orbit twist map such that   $\varphi_\flat\sigma=\varphi_\flat\sigma'\circ\varphi_\flat a$. Therefore $\varphi_\flat \sigma$ and $\varphi_\flat \sigma'$ are $\varphi_\flat a$-compatible.  

        Then we check:   $$\varphi^*c(\sigma)=c(\varphi_\flat\sigma)=c(\varphi_\flat\sigma'\circ\varphi_\flat a)=\calA([\overline{\widetilde{\varphi}(a_0)},\varphi(\sigma(d_0,K))])c(\varphi_\flat\sigma')=\varphi^*\calA([\widebar{a}_0,\sigma(d_0,K)])\varphi^*c(\sigma').$$
        Thus $\varphi^*c\in S_\BI^n(G\ltimes X;\varphi^*\calA)$.
       
        We now check that $\varphi^*$ respects the boundary homomorphism $\delta$.  Let $\sigma$ be an equivariant $(n+1)$-simplex of $X$.  It follows from the following commutative diagram
        $$\xymatrix{
            \Delta_{n-1}\times G/K \ar[d]_{\id_{\Delta_{n-1}}\times\widetilde{\varphi}_K} \ar[rr]_{\quad f_j\times\id_{G/K}} \ar@/^2pc/[rrr]^{\sigma^{(j)}} & & \Delta_n\times G/K \ar[d]_{\id_{\Delta_n}\times\widetilde{\varphi}_K} \ar[r]_{\quad\quad\sigma} & X \ar[d]^{\varphi} \\
            \Delta_{n-1}\times H/\widetilde{\varphi}(K) \ar[rr]_{f_j\times\id_{H/\widetilde{\varphi}(K)}} \ar@/_2pc/[rrr]_{\varphi_\flat\sigma^{(j)}} & & \Delta_n\times H/\widetilde{\varphi}(K) \ar[r]^{\quad\quad\varphi_\flat\sigma} & Y \\
        }$$
        that $(\varphi_\flat\sigma)^{(j)}=\varphi_\flat(\sigma^{(j)})=:\varphi_\flat\sigma^{(j)}.$  Also, $[\sigma^{(0)}_K]:=[\widebar{1_G},p]$ where $p$ is the interpolation from $t\mapsto \sigma((1-t)d_0+td_1,K)$, and this is sent by $\Pi\varphi$ to $[\overline{\widetilde{\varphi}(1_G)},\varphi\circ p]=[\widebar{1_H},\varphi\circ p]=[(\varphi_\flat\sigma^{(0)})_{\widetilde{\varphi}(K)}]$.
        Then
        \begin{align*}
            \varphi^*\delta c(\sigma)   =&~ \delta c(\varphi_\flat\sigma) \\
                =&~ \calA\left(\left[(\varphi_\flat\sigma^{(0)})_{\widetilde{\varphi}(K)}\right]\right)c(\varphi_\flat\sigma^{(0)})+\sum_{j=1}^{n+1}(-1)^jc(\varphi_\flat\sigma^{(j)}) \\
                =&~ \calA\left(\Pi\varphi[\sigma^{(0)}_K]\right)c(\varphi_\flat\sigma^{(0)})+\sum_{j=1}^{n+1}(-1)^jc(\varphi_\flat\sigma^{(j)}) \\
                =&~ \varphi^*\calA([\sigma^{(0)}_K])\varphi^*c(\sigma^{(0)})+\sum_{j=1}^{n+1}(-1)^j\varphi^*c(\sigma^{(j)}) \\
                =&~ \delta\varphi^*c(\sigma).
        \end{align*}
        This completes the proof.
    \end{proof}

Thus we have verified that our cohomology theory gives a contravariant functor.   We also observe the following functoriality with respect to coefficient systems. 

    \begin{proposition}[Functoriality of $H_\BI^\bullet$ in Coefficient Systems]\labell{p:nat isom cohom}
        For any $G$-space $X$ and coefficient systems $\calA$ and $\calB$, a natural transformation $\eta\colon\calA\Rightarrow\calB$ induces a map of cohomology groups $\eta_*\colon H^\bullet_\BI(G\ltimes X;\calA)\to H^\bullet_\BI(G\ltimes X;\calB)$.  This makes $H^\bullet_\BI(G\ltimes X;\cdot)$ into a functor from $\Coeff(G\ltimes X)$ to $\Ab$.  In particular, if $\eta$ is a natural isomorphism, then $\eta_*$ is an isomorphism of cohomology groups.
    \end{proposition}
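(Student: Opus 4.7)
The plan is to first construct the cochain-level map $\eta_\#\colon S^\bullet_\BI(G\ltimes X;\calA)\to S^\bullet_\BI(G\ltimes X;\calB)$ and verify it is a chain map, and then descend to cohomology. Given $c\in S^n_\BI(G\ltimes X;\calA)$ and an equivariant $n$-simplex $\sigma\colon\Delta_n\times G/H\to X$, I will define
$$\eta_\#(c)(\sigma):=\eta_{\sigma_H}\bigl(c(\sigma)\bigr)\in\calB(\sigma_H).$$
By construction this lies in $C^n_\BI(G\ltimes X;\calB)$ and depends linearly on $c$, so the remaining work is to check three things: (i) $\eta_\#(c)$ satisfies the orbit-twist compatibility required for membership in $S^n_\BI(G\ltimes X;\calB)$; (ii) $\eta_\#$ commutes with the coboundary $\delta$; and (iii) the assignment $\eta\mapsto\eta_\#$ is functorial in $\eta$.

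For (i), suppose $\sigma$ and $\tau\colon\Delta_n\times G/K\to X$ are $a$-compatible, so $c(\sigma)=\calA([\widebar{a}_0,\sigma(d_0,H)])c(\tau)$. Applying $\eta_{\sigma_H}$ and using the naturality square for $\eta$ on the arrow $[\widebar{a}_0,\sigma(d_0,H)]\colon\sigma_H\to\tau_K$ turns this into
$$\eta_\#(c)(\sigma)=\calB([\widebar{a}_0,\sigma(d_0,H)])\,\eta_\#(c)(\tau),$$
as required. For (ii), fix an equivariant $(n{+}1)$-simplex $\sigma$. Since for $j\geq 1$ the arrow $[\sigma^{(j)}_H]$ is the identity at $\sigma_H$, all the terms $\eta_\#(c)(\sigma^{(j)})$ land in $\calB(\sigma_H)$, and for $j=0$ naturality of $\eta$ on the arrow $[\sigma^{(0)}_H]\colon\sigma_H\to\sigma^{(0)}_H$ gives
$$\calB([\sigma^{(0)}_H])\,\eta_{\sigma^{(0)}_H}\bigl(c(\sigma^{(0)})\bigr)=\eta_{\sigma_H}\bigl(\calA([\sigma^{(0)}_H])c(\sigma^{(0)})\bigr).$$
Factoring $\eta_{\sigma_H}$ out of the sum defining $\delta\eta_\#(c)(\sigma)$ then collapses it to $\eta_{\sigma_H}(\delta c(\sigma))=\eta_\#(\delta c)(\sigma)$, proving $\delta\circ\eta_\#=\eta_\#\circ\delta$. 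Hence $\eta_\#$ descends to a well-defined map $\eta_*\colon H^\bullet_\BI(G\ltimes X;\calA)\to H^\bullet_\BI(G\ltimes X;\calB)$.

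For (iii), the identity natural transformation $\id_\calA$ clearly induces the identity cochain map, and given a composable pair $\mu\colon\calA\Rightarrow\calB$, $\eta\colon\calB\Rightarrow\calC$, one checks directly from the definition that $(\eta\circ\mu)_\#=\eta_\#\circ\mu_\#$, since $(\eta\circ\mu)_{\sigma_H}=\eta_{\sigma_H}\circ\mu_{\sigma_H}$. Passing to cohomology, this yields the asserted functoriality of $H^\bullet_\BI(G\ltimes X;\cdot)\colon\Coeff(G\ltimes X)\to\Ab$. Finally, if $\eta$ is a natural isomorphism, the componentwise inverses $\eta_{x_H}^{-1}$ assemble into a natural transformation $\eta^{-1}\colon\calB\Rightarrow\calA$ with $\eta^{-1}\circ\eta=\id_\calA$ and $\eta\circ\eta^{-1}=\id_\calB$; by the functoriality just proven, $(\eta^{-1})_*$ is a two-sided inverse to $\eta_*$, so $\eta_*$ is an isomorphism.

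There is no real obstacle here: the argument is a direct diagram chase using naturality of $\eta$. The only point that warrants any care is the coboundary check, where one must track that the $j\geq 1$ face arrows are identities on $\sigma_H$ while the $j=0$ face arrow changes the basepoint, so that $\eta_{\sigma_H}$ can be pulled out uniformly only after using naturality on the $j=0$ term; once this is observed, everything reduces to standard bookkeeping.
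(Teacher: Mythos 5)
Your proposal is correct and follows essentially the same route as the paper: define $\eta_*$ on cochains by $\eta_*(c)(\sigma)=\eta_{\sigma_H}(c(\sigma))$, use naturality of $\eta$ to verify orbit-twist compatibility and commutation with $\delta$ (with the only nontrivial term being the $j=0$ face), then deduce functoriality and invert componentwise when $\eta$ is a natural isomorphism. You merely spell out a few steps (the compatibility check and the composition law) that the paper leaves as brief remarks.
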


    \begin{proof}
        Suppose $\eta\colon\calA\Rightarrow\calB$ is a natural isomorphism.  Define $\eta_*\colon S_\BI^n(G\ltimes X;\calA)\to C_\BI^n(G\ltimes X;\calB)$ as follows: for $\sigma\colon\Delta_n\times G/K\to X$ an equivariant $n$-simplex, let $$\eta_*(c)(\sigma):=\eta_{\sigma_K}(c(\tau)).$$  It follows from the naturality of $\eta$ that $\eta_*c\in S_\BI^n(G\ltimes X;\calB)$. 

        Fix $\sigma\colon\Delta_{n+1}\times G/K\to X$ an equivariant $(n+1)$-simplex, and $c\in S_\BI^n(G\ltimes X;\calA)$.
        \begin{align*}
            \eta_*(\delta c)(\sigma)  =&~ \eta_*\left(\calA([\sigma_K^{(0)}])c(\sigma^{(0)})+\sum_{j=1}^{n+1}(-1)^jc(\sigma^{(j)})\right) \\
                =&~ \calB([\sigma_K^{(0)}])\eta_*c(\sigma^{(0)})+\sum_{j=1}^{n+1}(-1)^j\eta_*c(\sigma^{(j)}) \\
                =&~ \delta(\eta_*c)(\sigma)
        \end{align*}
        It follows that $\eta$ induces a homomorphism $\eta_*\colon H_\BI^\bullet(G\ltimes X;\calA)\to H_\BI^\bullet(G\ltimes X;\calB).$  

        The assignment $\calA\to H^\bullet_\BI(G\ltimes X,\calA)$ and $(\eta\colon\calA\to\calB)\mapsto\eta_*$ is functorial; in particular, if $\eta$ is a natural isomorphism, then $\eta_*$ is an isomorphism (in fact, on the level of cochains).  The result follows.
    \end{proof}

It follows from \cref{p:pullback BI} that $G\ltimes X\mapsto H^\bullet_\BI(G\ltimes X,\cdot)$ is natural: $$H^\bullet_\BI(G\ltimes X,\cdot)\circ\Coeff(\varphi)=\varphi^*\circ H^\bullet_\BI(H\ltimes Y,\cdot)$$ for any equivariant functor $\varphi\colon G\ltimes X\to H\ltimes Y$.

Now we start considering Morita equivalence  and examining what happens to twisted Bredon-Illman cohomology when we have a bibundle morphism.  
    
    \begin{proposition}[$H^\bullet_\BI$ and Pushforwards]\labell{p:pushforward BI}
        Given compact Lie groups $G$ and $H$, a bibundle $G\ltimes X\ot{\lambda} Z\uto{\rho} H\ltimes Y$, and a coefficient system $\calB$ of $\Pi(G\ltimes X)$, there is an isomorphism $$\lambda^*\colon H_\BI^\bullet(G\ltimes X;\calB)\to H_\BI^\bullet((G\times H)\ltimes Z;\lambda^*\calB).$$
    \end{proposition}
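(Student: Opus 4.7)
The strategy is to construct an explicit chain-level inverse $\lambda_\#$ to $\lambda^*$.  Viewing $\lambda\colon Z\to X$ as an equivariant functor $(G\times H)\ltimes Z\to G\ltimes X$ with associated group homomorphism $\pr_1\colon G\times H\to G$, \cref{p:pullback BI} provides the pullback $\lambda^*\colon S_\BI^\bullet(G\ltimes X;\calB)\to S_\BI^\bullet((G\times H)\ltimes Z;\lambda^*\calB)$ at the cochain level, which induces the cohomology map appearing in the statement.

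To define the inverse, fix a cochain $c'\in S_\BI^n((G\times H)\ltimes Z;\lambda^*\calB)$ and an equivariant $n$-simplex $\sigma\colon\Delta_n\times G/K\to X$.  I construct an equivariant lift of $\sigma$ along $\lambda$ as follows: choose $z_0\in\lambda^{-1}(\sigma(d_0,K))$, form the subgroup $\Gamma_{z_0}\leq G\times H$ of \cref{l:gamma}, and apply the homotopy lifting property of \cref{l:lifting} (using the contractibility of $\Delta_n$) to lift $\sigma(\cdot,K)\colon\Delta_n\to X^K$ to a continuous map $\widetilde\sigma_0\colon\Delta_n\to Z^{\Gamma_{z_0}}$ with $\widetilde\sigma_0(d_0)=z_0$.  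Extending $(G\times H)$-equivariantly produces an equivariant $n$-simplex $\widetilde\sigma\colon\Delta_n\times(G\times H)/\Gamma_{z_0}\to Z$ satisfying $\lambda_\flat\widetilde\sigma=\sigma$, and I set $(\lambda_\# c')(\sigma):=c'(\widetilde\sigma)$.  Since $\lambda^*\calB(\widetilde\sigma_{\Gamma_{z_0}})=\calB(\sigma_K)$, the value lies in the correct abelian group.

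The main technical obstacle is to show this is well-defined: for any two lifts $\widetilde\sigma,\widetilde\sigma'$ of the same $\sigma$, one must verify $c'(\widetilde\sigma)=c'(\widetilde\sigma')$.  Two lifts may differ in (i) the choice of initial point $z_0$, related by $z_0'=z_0b^{-1}$ for a unique $b\in H$, with $\Gamma_{z_0'}=(1_G,b)\Gamma_{z_0}(1_G,b^{-1})$ as in \cref{r:gamma}, and (ii) the choice of continuous lift from that initial point.  In case (i), the orbit twist map $\widecheck{(1_G,b)}$ of \cref{r:cochains} renders $\widetilde\sigma$ and $\widetilde\sigma'$ compatible, so the equivariant cochain condition on $c'$ yields the desired equality.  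In case (ii), two lifts $\widetilde\sigma_0$ and $\widetilde\sigma_0'$ starting at $z_0$ differ by a continuous map $\Delta_n\to H$ taking values in the centraliser of $\zeta_{z_0}(K)$ in $H$ (where $\zeta_{z_0}$ is as in \cref{r:gamma}), and this map encodes an orbit twist map relating $\widetilde\sigma$ and $\widetilde\sigma'$; the cochain condition again forces equality.

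Once well-definedness is established, the remaining checks are formal.  Compatibility of the lifting construction with the face operators $f_j$ shows $\lambda_\#$ is a chain map.  The identities $\lambda_\#\circ\lambda^*=\id$ and $\lambda^*\circ\lambda_\#=\id$ hold on cochains because in each case the composition reduces to evaluating the cochain on a chosen lift of a simplex that itself already serves as a lift, and well-definedness removes any ambiguity.  Consequently $\lambda^*$ is a chain isomorphism, and therefore an isomorphism on cohomology as claimed.
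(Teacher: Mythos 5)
Your construction follows essentially the same route as the paper: the paper also works at the cochain level, lifts an equivariant $n$-simplex $\sigma$ of $X$ to a simplex of $Z$ with isotropy $\Gamma_{z_0}$ via \cref{l:gamma} and \cref{l:lifting}, and uses principality together with the compatibility condition (and the fact that the coefficients on $Z$ are pulled back along $\lambda$, so the resulting twisting homomorphisms are identities) to show that the value of a cochain in $S^n_\BI((G\times H)\ltimes Z;\lambda^*\calB)$ on a lift is independent of all choices; this yields bijectivity of $\lambda^*$ on cochains, hence on cohomology. Your case (ii) analysis (two lifts from the same $z_0$ differ by a continuous map into the centraliser of $\zeta_{z_0}(K)$) is correct and is a slightly finer version of the paper's single uniform argument via the family $b_u$ with $\tau(u,\Gamma_z)b_u^{-1}=\tau'(u,\Gamma_{z'})$.

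There is, however, one genuine omission. For $\lambda^*\circ\lambda_\#=\id$ to prove surjectivity of $\lambda^*\colon S^n_\BI(G\ltimes X;\calB)\to S^n_\BI((G\times H)\ltimes Z;\lambda^*\calB)$, you must first know that $\lambda_\# c'$ actually lies in $S^n_\BI(G\ltimes X;\calB)$ and not merely in $C^n_\BI(G\ltimes X;\calB)$: that is, given an orbit twist map $a\colon\Delta_n\times G/K\to\Delta_n\times G/K'$ and $a$-compatible simplices $\sigma,\sigma'$ of $X$ (with possibly different isotropy subgroups $K\neq K'$), you need $\lambda_\# c'(\sigma)=\calB([\widebar{a}_0,\sigma(d_0,K)])\,\lambda_\# c'(\sigma')$. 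This is not covered by your well-definedness cases, which only compare two lifts of the \emph{same} simplex, and it is not ``formal'': it requires lifting $a$ itself to an orbit twist map $\widehat{a}\colon\Delta_n\times(G\times H)/\Gamma_z\to\Delta_n\times(G\times H)/\Gamma'_z$ making the chosen lifts $\widehat{a}$-compatible, again by producing a continuous family $b_u\in H$ via principality and checking equivariance of the resulting map of homogeneous spaces. The paper isolates exactly this step as part (3) of \cref{l:lift simplices} and uses it in the surjectivity argument. Your argument is repairable with the same technique you already use, but as written the surjectivity (equivalently, the claim that $\lambda_\#$ lands in the equivariant cochain groups, and hence also your chain-map and inverse statements) is unsupported. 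A smaller point worth making explicit: in both of your well-definedness cases the compatibility condition produces a coefficient homomorphism $\lambda^*\calB([\widebar{a}_0,\cdot])$, and the equality of values holds because $\Pi\lambda$ sends these arrows (which move only in the $H$-direction along a constant path) to identity arrows, so the pulled-back coefficients act trivially.
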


    To prove \cref{p:pushforward BI}, we will need to lift equivariant $n$-simplices of $X$ to $Z$.  The following lemma spells out how these lifts behave. 

    \begin{lemma}\labell{l:lift simplices}
         Given compact Lie groups $G$ and $H$, a bibundle $G\ltimes X\ot{\lambda} Z\uto{\rho} H\ltimes Y$, and a coefficient system $\calB$ of $\Pi(G\ltimes X)$,
        \begin{enumerate}
            \item\labell{i:lift}Let $K\leq G$ be a closed subgroup, and let $\sigma\colon\Delta_n\times G/K\to X$ be an equivariant $n$-simplex. There exists a \textbf{lift of $\sigma$} to $Z$; that is, a subgroup $\Gamma \leq G \times H$ and an  equivariant $n$-simplex $\tau\colon\Delta_n\times(G\times H)/\Gamma\to Z$ such that $$\lambda\circ\tau=\sigma\circ(\id_{\Delta_n}\times(\pr_1)_K)$$ where the map $(\pr_1)_K\colon (G\times H)/\Gamma\to G/K$ is the ($(G\times H)$-$G$)-equivariant map induced by $\pr_1\colon G\times H\to G$.
            
            \item\labell{i:ind of lift}If $\tau'$ is another lift of $\sigma$, then for any  $c\in S^n_\BI((G\times H)\ltimes Z,\lambda^*\calB)$, $$c(\tau)=c(\tau').$$
            
            \item\labell{i:lift a}If $a\colon\Delta_n\times G/K\to \Delta_n\times G/K'$ is an orbit twist map and $\sigma'\colon\Delta_n\times G/K'$ is an equivariant $n$-simplex which is $a$-compatible with $\sigma$, then for any lift $\tau$ of $\sigma$ and lift $\tau'$ of $\sigma'$, there exists an orbit twist map $\widehat{a}\colon (G\times H)/\Gamma\to(G\times H)/\Gamma'$ such that $\tau$ and $\tau'$ are $\widehat{a}$-compatible,   
        \end{enumerate}
    \end{lemma}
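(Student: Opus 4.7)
My plan is to handle the three parts in the order \cref{i:lift}, \cref{i:lift a}, \cref{i:ind of lift}, since the last will drop out as a degenerate case of the second.

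For \cref{i:lift}, the idea is to lift a chosen basepoint through $\lambda$ and then propagate equivariantly. I would fix any $z_0\in\lambda^{-1}(\sigma(d_0,K))$, and let $\Gamma$ be the unique subgroup of $G\times H$ with $\pr_1(\Gamma)=K$ and $z_0\in Z^\Gamma$ supplied by \cref{l:gamma}\cref{i:gamma}. Since $\Delta_n$ is contractible and $u\mapsto\sigma(u,K)$ takes values in the connected component of $X^K$ containing $\sigma(d_0,K)$, the homotopy lifting property from \cref{l:lifting} produces a continuous lift $\widetilde{\sigma}\colon\Delta_n\to Z^\Gamma$ starting at $z_0$. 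Setting $\tau(u,(g,h)\Gamma):=(g,h)\cdot\widetilde{\sigma}(u)$ then yields a well-defined equivariant simplex because $\widetilde{\sigma}(u)\in Z^\Gamma$, and the required commutativity $\lambda\circ\tau=\sigma\circ(\id_{\Delta_n}\times(\pr_1)_K)$ is immediate from the $G$-equivariance and $H$-invariance of $\lambda$.

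For \cref{i:lift a}, write $\bar{a}_u(gK)=g\alpha_u K'$ and let $\widetilde{\sigma}$, $\widetilde{\sigma}'$ be the lifted paths underlying $\tau$, $\tau'$. The $a$-compatibility gives $\lambda(\widetilde{\sigma}(u))=\alpha_u\lambda(\widetilde{\sigma}'(u))=\lambda(\alpha_u\widetilde{\sigma}'(u))$, so principality of the $H$-action produces a unique continuous family $h_u\in H$ with $\widetilde{\sigma}(u)=(\alpha_u,h_u)\cdot\widetilde{\sigma}'(u)$. I would then define
\[
\widehat{a}\bigl(u,(g,h)\Gamma\bigr):=\bigl(u,(g,h)(\alpha_u,h_u)\Gamma'\bigr),
\]
after which $\tau=\tau'\circ\widehat{a}$ is a direct calculation. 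Well-definedness on cosets reduces to checking $(\alpha_u,h_u)^{-1}\Gamma(\alpha_u,h_u)\leq\Gamma'$, which proceeds as follows: any $\delta\in\Gamma$ fixes $\widetilde{\sigma}(u)$, so its conjugate by $(\alpha_u,h_u)$ fixes $\widetilde{\sigma}'(u)$; its $G$-projection lies in $\alpha_u^{-1}K\alpha_u\leq K'$; and the uniqueness assertion of \cref{l:gamma}\cref{i:gamma} then forces the conjugate into $\Gamma'$.

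Finally, \cref{i:ind of lift} falls out from \cref{i:lift a} applied with $\sigma=\sigma'$ and $a=\id$. In that degenerate case $\alpha_u=1_G$, so $\widehat{a}_0=\overline{(1_G,h_{d_0})}$, and the arrow $\bigl[\widebar{\widehat{a}_0},\tau(d_0,\Gamma)\bigr]\colon(z_0)_\Gamma\to(z_0')_{\Gamma'}$ is sent by $\Pi\lambda$ to $\bigl[\widebar{1_G},\sigma(d_0,K)\bigr]$, the identity at $\sigma(d_0,K)_K$. Since $\lambda^*\calB=\calB\circ\Pi\lambda$, the cochain condition from \cref{d:cochains} collapses to $c(\tau)=c(\tau')$. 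The main obstacle I anticipate is the subgroup bookkeeping: tracking how the graph subgroups $\Gamma,\Gamma'\leq G\times H$ transform under the principality relation as one moves along the simplex, and verifying that the conjugations necessary for $\widehat{a}$ to descend to cosets hold. What should keep this tractable rather than delicate is the uniqueness clause in \cref{l:gamma}\cref{i:gamma}, which pins each graph subgroup down from its $G$-projection together with a fixed point.
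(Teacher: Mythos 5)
Your proof is correct and follows essentially the same route as the paper: fix a basepoint lift $z_0$, use \cref{l:gamma} and the fibration of \cref{l:lifting} to lift the underlying simplex into $Z^{\Gamma}$ and extend $(G\times H)$-equivariantly, then use principality to produce the continuous family $(\alpha_u,h_u)$ defining the orbit twist map $\widehat{a}$, whose induced arrow is sent by $\Pi\lambda$ to an identity so that the cochain compatibility condition gives the claimed equality. The only differences are organisational and in your favour: you deduce \cref{i:ind of lift} as the $a=\id$ case of \cref{i:lift a} (the paper proves it directly by the same principality argument), and you are slightly more careful than the paper in ensuring the lift lands in the fixed set $Z^{\Gamma}$ (needed for $\tau$ to be well defined on cosets) and in verifying well-definedness of $\widehat{a}$ on cosets via the uniqueness clause of \cref{l:gamma}.
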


    \begin{proof}  
        Fix $z\in\lambda^{-1}(\sigma(d_0,K))$ and define   $\Gamma=\Gamma_z$ to be the subgroup fixing $z$ and projecting to $K$ as in \cref{i:gamma} of \cref{l:gamma}.   The equivariant $n$-simplex $\sigma$ is completely determined by the ordinary $n$-simplex $$\sigma|_{\Delta_n\times\{K\}}\colon\Delta_n\to X\colon u\mapsto \sigma(u,K).$$  By the homotopy lifting property, since $\lambda\colon Z\to X$ is a principal $H$-bundle, there is a lift $\widehat{\sigma}\colon\Delta_n\to Z$ of $\sigma|_{\Delta_n\times\{K\}}$ to $Z$ such that $\widehat{\sigma}(d_0)=z$.  Define the equivariant $n$-simplex $\tau\colon\Delta_n\times(G\times H)/\Gamma_z\to Z$ by $\tau(u,(g,h)\Gamma_z):=(g,h)\widehat{\sigma}(u)$.

        The homomorphism $\pr_1\colon G\times H\to G$ induces an equivariant map $(\pr_1)_K\colon (G\times H)/\Gamma_z\to G/K$ given by $(\pr_1)_K((g,h)\Gamma_z):=gK.$  Then $$\lambda\circ\tau(u,(g,h)\Gamma_z)=\lambda((g,h)\widehat{\sigma}(u))=g\sigma(u,K)=\sigma(u,(\pr_1)_K((g,h)\Gamma_z)),$$ proving the first statement.

        For the second statement, fix $c\in S_\BI^n((G\times H)\ltimes Z;\lambda^*\calB)$. Let $z'=\tau'(d_0,\Gamma_{z'})$, so that  $\tau'\colon\Delta_n\times(G\times H)/\Gamma_{z'}\to Z$.   Since $\lambda\colon Z\to X$ is a principal $H$-bundle, the map $\Delta_n\to H$ sending $u$ to the unique $b_u\in H$ such that $\tau(u,\Gamma_z)(b_u)^{-1}=\tau'(u,\Gamma_{z'})$ is continuous.  Moreover, letting $a\colon\Delta_n\times(G\times H)/\Gamma_{z'}\to\Delta_n\times(G\times H)/\Gamma_z$ be the continuous map $(u,(g,h)\Gamma_{z'})\mapsto(u,(g,hb_u)\Gamma_z)$, we have $\tau'=\tau\circ a$, and so $\tau$ and $\tau'$ are $a$-compatible.  Thus, $$c(\tau')=\lambda^*\calB([\widebar{a},z'])c(\tau)=\calB([\widebar{1_G},\sigma(d_0,K)])c(\tau)=c(\tau).$$

        For the last statement, suppose $\sigma\colon\Delta_n\times G/K\to X$ and $\sigma'\colon\Delta_n\times G/K'\to X$ are equivariant $n$-simplices and $a$ is an orbit twist map such that $\sigma$ and $\sigma'$ are $a$-compatible.  Fix $z\in\lambda^{-1}(\sigma(d_0,K))$, and let $\Gamma_z$ and $\Gamma'_z$ be the associated subgroups of $G\times H$ as in \cref{i:gamma} of \cref{l:gamma} corresponding to $K$ and $K'$, resp.     Let $\tau$ be a lift of $\sigma$ to $Z$ such that $\tau(d_0,\Gamma_z)=z$, and let $\tau'$ be a lift of $\sigma'$ such that $\tau'(d_0,\Gamma'_z)=z$. We will show that there exists an orbit twist map $\widehat{a}\colon (G\times H)/\Gamma_z\to(G\times H)/\Gamma'_z$ such that $\tau$ and $\tau'$ are $\widehat{a}$-compatible, where $\lambda(z)=\sigma(d_0,K)$, $(\pr_1)_K(\Gamma_z)=K$, and $(\pr_1)_{K'}(\Gamma'_z)=K'$.
        
        Since $\sigma=\sigma'\circ a$, it follows that for each $u\in U$ there exists a unique $b_u\in H$ such that $$\tau(u,(g,h)\Gamma_z)(b_u)^{-1}=\tau'(u,(g,h)\Gamma'_z).$$  It follows from principality that $b_u$ is continuous in $u$.  Define $$\widehat{a}\colon\Delta_n\times(G\times H)/\Gamma_z\to\Delta_n\times(G\times H)/\Gamma'_z\colon(u,(g,h)\Gamma_z)\mapsto (u,(ga_u,(b_u)^{-1})\Gamma'_z).$$  Then $\widebar{a}_u\circ(\pr_1)_K=(\pr_1)_{K'}\circ\widebar{\widehat{a}}_u$ for each $u\in\Delta_n$.  We have the following commutative diagram, completing the proof.
        $$\begin{gathered}[b]\xymatrix{
            \Delta_n\times(G\times H)/\Gamma_z \ar[ddd]_{\id_{\Delta_n}\times(\pr_1)_K} \ar[rr]^{\widehat{a}} \ar[dr]_{\tau} & & \Delta_n\times(G\times H)/\Gamma'_z \ar[dl]^{\tau'} \ar[ddd]^{\id_{\Delta_n}\times(\pr_1)_{K'}} \\
             & Z \ar[d]^{\lambda} & \\
             & X & \\
            \Delta_n\times G/K \ar[rr]_{a} \ar[ur]^{\sigma} & & \Delta_n\times G/K' \ar[ul]_{\sigma'} \\
        }\\[-\dp\strutbox]\end{gathered}\eqno\qedhere$$
    \end{proof}

    We are now ready to prove \cref{p:pushforward BI}.

    \begin{proof}[Proof of \cref{p:pushforward BI}]
        By \cref{p:pullback BI}, $$\lambda^*\colon H_\BI^\bullet(G\ltimes X;\calB)\to H_\BI^\bullet((G\times H)\ltimes Z;\lambda^*\calB)$$ exists, and so we only need to show that it is a bijection.

        Suppose $c\in S^n_\BI(G\ltimes X;\calB)$ satisfies $\lambda^*c=0$.  For any equivariant $n$-simplex $\tau$ of $Z$, $$c(\lambda_\flat\tau)=0.$$  But for any equivariant $n$-simplex $\sigma$ of $X$, $c(\sigma)=c(\lambda_\flat\tau)=\lambda^*c(\tau)=0$ where $\tau$ is a lift of $\sigma$, which exists by \cref{i:lift} of \cref{l:lift simplices}; moreover, this is independent of the lift $\tau$ of $\sigma$ by \cref{i:ind of lift} of \cref{l:lift simplices}.  Thus $c=0$, and $\lambda^*\colon S_\BI^\bullet(G\ltimes X;\calB)\to S_\BI^\bullet((G\times H)\ltimes Z;\lambda^*\calB)$ is injective.

        Suppose $\widehat{c}\in S_\BI^n((G\times H)\ltimes Z;\lambda^*\calB)$.  Define for any equivariant $n$-simplex $\sigma\colon\Delta_n\times G/K\to X$ the function $c(\sigma):=\widehat{c}(\tau)$ where $\tau$ is any lift of $\sigma$ to $Z$.  By \cref{l:lift simplices}, $\tau$ exists and $c(\sigma)$ is independent of the lift $\tau$; thus $c$ is a well-defined element of $C_\BI^n(G\ltimes X;\calB)$.  Suppose $\sigma'\colon\Delta_n\times G/K'\to X$ is another equivariant $n$-simplex of $X$ and $a\colon\Delta_n\times G/K\to\Delta_n\times G/K'$ is an orbit twist map such that $\sigma$ and $\sigma'$ are $a$-compatible. Let $\tau$, $\tau'$, and $\widehat{a}$ be as in \cref{i:lift a} of \cref{l:lift simplices}.  Then $$c(\sigma)=\widehat{c}(\tau)=\widehat{c}(\tau'\circ\widehat{a})=\lambda^*\calB\left(\left[\overline{\widehat{a}}_0,z\right]\right)\widehat{c}(\tau')=\calB\left(\left[\overline{a}_0,\sigma(d_0,K)\right]\right)c(\sigma').$$  Thus $c\in S_\BI^n(G\ltimes X,\calB)$.  It follows that $\lambda^*$ is surjective.
    \end{proof}

    We now prove the main result of the paper.

    \begin{theorem}[Morita Invariance of Twisted Bredon-Illman Cohomology]\labell{t:bredon-illman} 
        If $G\ltimes X$ and $H\ltimes Y$ are Morita equivalent Lie group action groupoids with $G$ and $H$ compact, and $\calA$ is a coefficient system on $\Pi(H\ltimes Y)$, then $H^\bullet_\BI(G\ltimes X;\lambda_*\rho^*\calA)\cong H^\bullet_\BI(H\ltimes Y;\calA)$ for any biprincipal bibundle $G\ltimes X\ot{\lambda}Z\underset{\raisebox{1mm}{$\scriptstyle{\rho}$}}{\subdwe}H\ltimes Y$.
    \end{theorem}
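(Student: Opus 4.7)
The plan is to assemble the theorem from three previously established results: \cref{p:pushforward BI}, \cref{p:pushforward coeff system}, and \cref{p:nat isom cohom}, using biprincipality (\cref{c:bibundle form}) to apply \cref{p:pushforward BI} symmetrically in both $\lambda$ and $\rho$.

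First, I would apply \cref{p:pushforward BI} directly to the bibundle $G\ltimes X\ot{\lambda}Z\uto{\rho}H\ltimes Y$ with coefficient system $\calB:=\lambda_*\rho^*\calA$ on $\Pi(G\ltimes X)$. This yields an isomorphism
$$\lambda^*\colon H^\bullet_\BI(G\ltimes X;\lambda_*\rho^*\calA)\xrightarrow{\;\sim\;}H^\bullet_\BI((G\times H)\ltimes Z;\lambda^*\lambda_*\rho^*\calA).$$
Next, \cref{p:pushforward coeff system} supplies a natural isomorphism $\lambda^*\lambda_*\rho^*\calA\cong\rho^*\calA$ of coefficient systems on $\Pi((G\times H)\ltimes Z)$. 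By \cref{p:nat isom cohom}, this natural isomorphism induces an isomorphism on cohomology
$$H^\bullet_\BI((G\times H)\ltimes Z;\lambda^*\lambda_*\rho^*\calA)\xrightarrow{\;\sim\;}H^\bullet_\BI((G\times H)\ltimes Z;\rho^*\calA).$$

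For the third step, I invoke biprincipality: by \cref{c:bibundle form}, the map $\rho\colon Z\to Y$ is an $H$-equivariant principal $G$-bundle, so (after swapping the roles of the two groups, which is harmless thanks to the left-action convention in \cref{r:bibundle}) the bibundle may equally be regarded as going from $H\ltimes Y$ to $G\ltimes X$ with left anchor $\rho$. Applying \cref{p:pushforward BI} once more, now to $\rho$ with coefficient system $\calA$ on $\Pi(H\ltimes Y)$, yields an isomorphism
$$\rho^*\colon H^\bullet_\BI(H\ltimes Y;\calA)\xrightarrow{\;\sim\;}H^\bullet_\BI((G\times H)\ltimes Z;\rho^*\calA).$$

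Composing the three isomorphisms gives $H^\bullet_\BI(G\ltimes X;\lambda_*\rho^*\calA)\cong H^\bullet_\BI(H\ltimes Y;\calA)$, which is the claim. There is no genuine new technical obstacle at this stage: the two isomorphism theorems for the anchor maps have already been proved by lifting simplices along principal bundles in \cref{l:lift simplices,p:pushforward BI}, and the naturality step is handled by \cref{p:nat isom cohom,p:pushforward coeff system}. The only subtle point to check is that the middle natural isomorphism is compatible with the two applications of \cref{p:pushforward BI} used on either side, i.e.\ that the composite is well-defined independent of the choice of right inverse $\Sigma$ used to construct $\lambda_*\rho^*\calA$; but \cref{p:pushforward coeff system} asserts precisely that any two such choices differ by a natural isomorphism, so by \cref{p:nat isom cohom} the resulting cohomology isomorphism is independent of this choice.
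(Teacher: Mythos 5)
Your proposal is correct and follows essentially the same route as the paper: both proofs compose the two isomorphisms from \cref{p:pushforward BI} (applied to $\lambda$ with $\lambda_*\rho^*\calA$ and, via biprincipality, to $\rho$ with $\calA$) with the isomorphism on cohomology induced by the natural isomorphism $\lambda^*\lambda_*\rho^*\calA\cong\rho^*\calA$ from \cref{p:pushforward coeff system,p:nat isom cohom}. Your explicit remark that \cref{c:bibundle form} is what licenses applying \cref{p:pushforward BI} on the $\rho$ side is a point the paper leaves implicit, but it is the same argument.
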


    \begin{proof}
        Let $G\ltimes X\ot{\lambda}Z\underset{\raisebox{1mm}{$\scriptstyle{\rho}$}}{\subdwe}H\ltimes Y$ be a biprincipal bibundle.  By \cref{p:pushforward BI} applied to the equivariant functor $\rho$, there is an isomorphism $$\rho^*\colon H_\BI^\bullet(H\ltimes Y;\calA)\to H_\BI^\bullet((G\times H)\ltimes Z;\rho^*\calA)$$  Then we consider the pushforward coefficient system $\lambda_*\rho^*A$ on $X$, and apply \cref{p:pushforward BI} to the functor $\lambda$ to get   an isomorphism $$\lambda^*\colon H_\BI^\bullet(G\ltimes X;\lambda_*\rho^*\calA)\to H_\BI^\bullet((G\times H)\ltimes Z;\lambda^*\lambda_*\rho^*\calA).$$  By \cref{p:pushforward coeff system}, there is a natural isomorphism $\eta\colon\lambda^*\lambda_*\rho^*\calA\Rightarrow\rho^*\calA$.  By \cref{p:nat isom cohom}, this induces an isomorphism $$\eta_*\colon H_\BI^\bullet((G\times H)\ltimes Z;\lambda^*\lambda_*\rho^*\calA)\to H_\BI^\bullet((G\times H)\ltimes Z;\rho^*\calA).$$  Composing, we get an isomorphism $$(\rho^*)^{-1}\circ\eta_*\circ\lambda^*\colon H_\BI^\bullet(G\ltimes X;\lambda_*\rho^*\calA)\to H_\BI^\bullet(H\ltimes Y;\calA).\eqno\qedhere$$
    \end{proof}

    Before moving onto our earlier examples and computing their cohomologies, we prove a couple of applications of \cref{t:bredon-illman}.  For the first, we retrieve a known result, which is a consequence of the fact that Morita equivalent Lie groupoids have homeomorphic orbit spaces \cite[Theorem 4.3.1]{dH:orbispaces} (in fact, diffeomorphic \cite[Theorem 3.8]{watts:lgpd-derham}).

    \begin{corollary}\labell{c:cohom orbit sp}
        Given Morita equivalent action groupoids $G\ltimes X$ and $H\ltimes Y$, the orbit spaces $X/G$ and $Y/G$ have isomorphic singular cohomologies.
    \end{corollary}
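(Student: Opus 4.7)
The plan is to deduce the corollary from \cref{t:bredon-illman} by specializing to the constant coefficient system. Fix an abelian group $A$ and let $\und{A}$ denote the \emph{constant} coefficient system on $\Pi(H\ltimes Y)$ sending every object to $A$ and every arrow to $\id_A$. The key step is the standard identification
$$H^\bullet_\BI(\calG\ltimes W;\und{A})\cong H^\bullet(W/\calG;A)$$
for any compact Lie group action groupoid $\calG\ltimes W$, with ordinary singular cohomology on the right. With $\calA=\und{A}$, all the coefficient-system maps appearing in \cref{d:cochains} collapse to $\id_A$, so the boundary reduces to the usual signed alternating sum; moreover every equivariant simplex $\sigma\colon\Delta_n\times\calG/K\to W$ descends via the quotient $W\to W/\calG$ to a singular simplex $[\sigma]\colon\Delta_n\to W/\calG$, and the orbit twist compatibility forces a cochain to factor through this descent.

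Next, I would verify that $\und{A}$ is preserved by the pullback and pushforward along the biprincipal bibundle $G\ltimes X\ot{\lambda}Z\underset{\raisebox{1mm}{$\scriptstyle\rho$}}{\subdwe}H\ltimes Y$. The pullback $\rho^*\und{A}=\und{A}\circ\Pi\rho$ is constant by inspection, and for any right inverse $\Sigma$ of $\Pi\lambda$ furnished by \cref{p:right inverse}, the pushforward $\lambda_*\rho^*\und{A}=\Sigma^*\rho^*\und{A}$ is again constant. Thus $\lambda_*\rho^*\und{A}$ is precisely $\und{A}$ on $\Pi(G\ltimes X)$, and applying \cref{t:bredon-illman} yields
$$H^\bullet(X/G;A)\cong H^\bullet_\BI(G\ltimes X;\und{A})\cong H^\bullet_\BI(H\ltimes Y;\und{A})\cong H^\bullet(Y/H;A)$$
for every abelian group $A$, which gives the corollary.

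The main technical obstacle is the rigorous verification of the first identification. The descent direction---sending a cochain on equivariant simplices to a cochain on the orbit space---is essentially automatic from the orbit twist compatibility, but constructing its inverse requires lifting an arbitrary singular simplex on $W/\calG$ to an equivariant simplex $\Delta_n\times\calG/K\to W$. For compact Lie group actions this is handled by the slice theorem (which produces equivariant local trivializations over small neighborhoods in $W/\calG$) combined with barycentric subdivision, and one must argue that different choices of equivariant lift produce cohomologous cochains; once established, this identification is functorial in $A$, and the rest of the argument is a matter of assembling the three isomorphisms above.
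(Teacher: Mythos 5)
Your proposal is correct and takes essentially the same route as the paper: the paper's proof also fixes the constant coefficient systems on both groupoids, notes that $\lambda_*\rho^*$ of the constant system on $\Pi(H\ltimes Y)$ is again the constant system on $\Pi(G\ltimes X)$, identifies twisted Bredon-Illman cohomology with constant coefficients with the singular cohomology of the orbit space, and applies \cref{t:bredon-illman}. The only difference is cosmetic: the paper treats that identification with $H^\bullet(W/\calG)$ as a known fact rather than re-deriving it by descending and lifting equivariant simplices as you sketch.
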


    \begin{proof}
        Let $G\ltimes X\ot{\lambda}Z\underset{\raisebox{1mm}{$\scriptstyle\rho$}}{\subdwe} H\ltimes Y$ be a biprincipal bibundle.  Let $\calA$ (resp.\ $\calB$) be the coefficient system on $\Pi(G\ltimes X)$ (resp.\ $\Pi(H\ltimes Y)$) sending each object to $\ZZ$ and each arrow to the identity map.  Then $\calA=\lambda_*\rho^*\calB$, with the resulting twisted Bredon-Illman cohomology isomorphic to the singular cohomology of the orbit space $X/G$ (resp.\ $Y/H$).
    \end{proof}

    For our second application, we define a coefficient system similar to that in \cref{x:d4} for general action groupoids, and show that Morita equivalent action groupoids admit isomorphic twisted Bredon-Illman cohomologies with values from these coefficient systems.  We then define a particular interesting family of cocycles which are Morita invariants.

    Given an action groupoid $G\ltimes X$, let $\calR_X$ be the coefficient system on $\Pi(G\ltimes X)$ defined by sending each object $x_K$ to the representation ring $\Rep(K)$ for each closed subgroup $K\leq G$, each arrow of the form $[\widebar{1_G},p]$ to the identity homomorphism for each path $p$, each arrow induced by an inclusion $K\hookrightarrow K'$ to the restriction operator, and each arrow of the form $[\widebar{g},x]$ to precomposition with conjugation.  

    For each $x\in X$, by the Slice Theorem, there is a linear action of $\Stab(x)$ on the normal space $V_x:=T_xX/T_x(G\cdot x)$, known as the isotropy or normal representation \cite[Theorems 2.3.3, 2.4.1]{DK}.  This is a Morita invariant \cite[Theorem 4.3.1]{dH:orbispaces}.  (In fact, one can see this directly using Lashof's perspective of equivariant bundles \cite{lashof}.)  More precisely, given a biprincipal bibundle $G\ltimes X\ot{\lambda}Z\underset{\raisebox{1mm}{$\scriptstyle\rho$}}{\subdwe} H\ltimes Y$, for a fixed $x\in X$ and a choice of $z\in\lambda^{-1}(x)$, $\lambda$ induces an isomorphism between the isotropy representations on $V_z$ and $V_x$, and since $\rho$ is $G$-principal, it similarly induces an isomorphism between the isotropy representations on $V_z$ and $V_{\rho(z)}$.

    Define an equivariant $k$-cochain $c^k_{G\ltimes X}\in C_\BI^k(G\ltimes X;\calR_X)$ that sends each $k$-simplex $\sigma\colon\Delta_k\times G/H\to X$ to the restriction of the isotropy representation to $H\leq\Stab(x)$ where $x=\sigma(d_0,H)$.

    \begin{corollary}\labell{c:rep coeff sys}
        Let $G\ltimes X$ and $H\ltimes Y$ be Morita equivalent action groupoids.  Then $H^\bullet_\BI(G\ltimes X;\calR_X)$ is isomorphic to $H^\bullet_\BI(H\ltimes Y;\calR_Y)$, and this isomorphism sends $c^k_{G\ltimes X}$ to $c^k_{H\ltimes Y}$ for each even $k$.
    \end{corollary}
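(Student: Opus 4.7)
The plan is to combine \cref{t:bredon-illman} (applied with coefficient system $\calR_Y$) with a natural isomorphism $\lambda_*\rho^*\calR_Y\Rightarrow\calR_X$ and \cref{p:nat isom cohom}, then trace the cochains $c^k_{G\ltimes X}$ through the resulting chain of bijections. Throughout, the compact Lie group-equivariant principality of $\lambda$ and $\rho$ (together with \cref{t:bierstone}) ensures that stabilizers and their representations match up between Morita equivalent groupoids, as explicitly noted in the discussion preceding the corollary.

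First I would confirm that $c^k_{G\ltimes X}$ actually lies in $S_\BI^k(G\ltimes X;\calR_X)$. If $\sigma$ and $\tau$ are $a$-compatible equivariant $k$-simplices with subgroups $H$ and $K$, then writing $\widebar{a}_0(gH)=ga_0K$ we have $\sigma(d_0,H)=a_0\tau(d_0,K)$ and the map $h\mapsto a_0^{-1}ha_0$ embeds $H$ into $K$; acting by $a_0$ identifies the isotropy representation at $\tau(d_0,K)$ restricted to $K$ with that at $\sigma(d_0,H)$ restricted to $H$, which is exactly the effect of $\calR_X([\widebar{a}_0,\sigma(d_0,H)])$. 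Next, I would verify the cocycle condition for even $k$. For $j\geq 1$, the face $\sigma^{(j)}$ shares the basepoint and isotropy subgroup of $\sigma$, so $c^k(\sigma^{(j)})=c^k(\sigma)$; for $j=0$, the arrow $[\widebar{1_G},p]$ moves the basepoint along a path in $X^H$, along which the restriction of the isotropy representation to $H$ is locally constant (by the equivariant tube theorem), so $\calR_X([\sigma_H^{(0)}])c^k(\sigma^{(0)})=c^k(\sigma)$. The boundary formula collapses to
\[
\delta c^k(\sigma)=c^k(\sigma)\Bigl(1+\sum_{j=1}^{k+1}(-1)^j\Bigr),
\]
which vanishes iff $k$ is even.

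Next, I would construct the natural isomorphism $\eta\colon\lambda_*\rho^*\calR_Y\Rightarrow\calR_X$. For each $x\in X$, fix a choice $z\in\lambda^{-1}(x)$ defining $\lambda_*$ via \cref{p:right inverse}; then $\lambda_*\rho^*\calR_Y(x_K)=\Rep(\zeta_z(K))$, where $\zeta_z|_K\colon K\to\zeta_z(K)\leq H$ is the isomorphism from \cref{l:gamma} and \cref{r:gamma}. Define $\eta_{x_K}$ to be precomposition with $\zeta_z|_K$. Naturality on arrows $[\widebar{g},x]$ reduces to compatibility of $\zeta_z$ with conjugation, while naturality on arrows $[\widebar{1_G},p]$ follows because lifts $\widetilde{p}$ in $Z^{\Gamma_z}$ from \cref{l:lifting} do not change $\zeta_z(K)$. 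Combining \cref{t:bredon-illman} applied to $\calR_Y$ with \cref{p:nat isom cohom} applied to $\eta$ then yields the desired isomorphism $H^\bullet_\BI(G\ltimes X;\calR_X)\cong H^\bullet_\BI(H\ltimes Y;\calR_Y)$.

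Finally, to identify the cocycles, given an equivariant $k$-simplex $\sigma$ of $X$, I would choose a lift $\tau$ to $Z$ as in \cref{l:lift simplices} and follow $c^k_{G\ltimes X}(\sigma)$ through the chain $(\rho^*)^{-1}\circ\eta_*\circ\lambda^*$ of the proof of \cref{t:bredon-illman}. Under $\lambda^*$ and $\eta_*$, the value $c^k_{G\ltimes X}(\sigma)=\mathrm{Res}^{\Stab(x)}_H V_x\in\Rep(H)$ is transported via $\zeta_z$ to $\mathrm{Res}^{\Stab(\rho(z))}_{\zeta_z(H)}V_{\rho(z)}\in\Rep(\zeta_z(H))$, using the Morita invariance of the isotropy representation recalled before the corollary, namely that $\lambda$ and $\rho$ induce $\zeta_z$-equivariant isomorphisms $V_x\cong V_z\cong V_{\rho(z)}$. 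But this is precisely the value of $c^k_{H\ltimes Y}(\rho_\flat\tau)$ at the basepoint $\rho(z)_{\zeta_z(H)}$, so $(\rho^*)^{-1}$ sends it back to $c^k_{H\ltimes Y}(\rho_\flat\tau)$. The main obstacle I anticipate is this last bookkeeping step: the chain $(\rho^*)^{-1}\circ\eta_*\circ\lambda^*$ requires aligning three different descriptions of the isotropy representation (at $x$, at $z$, and at $\rho(z)$) with two different but isomorphic subgroups ($H$ and $\zeta_z(H)$), and verifying that the ring structure on $\Rep$ and the $a$-compatibility of lifted simplices (from \cref{l:lift simplices}\cref{i:lift a}) make these identifications independent of the choices of $z$ and $\tau$.
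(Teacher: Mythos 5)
Your proposal follows essentially the same route as the paper's proof: a natural isomorphism between $\calR_X$ and $\lambda_*\rho^*\calR_Y$ built from the graph homomorphisms $\zeta_z$ of \cref{l:gamma}, combined with \cref{p:nat isom cohom} and \cref{t:bredon-illman}, and then tracing $c^k_{G\ltimes X}$ through $(\rho^*)^{-1}\circ\eta_*\circ\lambda^*$ via lifted simplices and the identifications $V_x\cong V_z\cong V_{\rho(z)}$ recalled before the corollary; your explicit parity computation showing $\delta c^k$ vanishes only for even $k$ makes precise what the paper leaves implicit. The one attribution to fix is that injectivity of $\zeta_z|_K\colon K\to\zeta_z(K)$ (needed for $\eta_{x_K}$ to be an isomorphism) is not part of \cref{l:gamma} or \cref{r:gamma} but comes from $G$-principality of $\rho$, i.e.\ biprincipality via \cref{c:bibundle form}, exactly as the paper notes when verifying its diagram of Lie groups.
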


    \begin{proof}
        Let $G\ltimes X\ot{\lambda}Z\underset{\raisebox{1mm}{$\scriptstyle\rho$}}{\subdwe} H\ltimes Y$ be a biprincipal bibundle.  Fix a choice of $z\in\lambda^{-1}(x)$ for each $x\in X$. Given an arrow $[\widebar{g},p]\colon x_K\to (x')_{K'}$, choices $z\in\lambda^{-1}(x)$ and $z'\in\lambda^{-1}(x')$, a lift $\widetilde{p}$ of $p$ to $Z^{\Gamma_z^K}$ starting at $z$ and ending at $gz'h^{-1}$ for some unique $h\in H$, we have the following diagram of Lie groups:
        \begin{equation}\labell{e:morita invts}
            \begin{gathered}
            \xymatrix{
                K \ar[rr]^{\gamma_K} \ar[d]_{C_{g^{-1}}} && \Gamma_z^K \ar[d]_{C_{(g^{-1},h^{-1})}} \ar[r]^{\widetilde{\rho}} & \widetilde{\rho}(\Gamma^K_z) \ar[d]^{C_{h^{-1}}} \\
                g^{-1}Kg \ar[rr]_{\gamma_{g^{-1}Kg}} \ar@{^{(}->}[d] && \Gamma_{z'}^{g^{-1}Kg} \ar@{^{(}->}[d] \ar[r]^{\widetilde{\rho}} & \widetilde{\rho}(\Gamma^{g^{-1}Kg}_{z'}) \ar@{^{(}->}[d] \\
                K' \ar[rr]_{\gamma_{K'}} && \Gamma_{z'}^{K'} \ar[r]_{\widetilde{\rho}} & \widetilde{\rho}(\Gamma_{z'}^{K'}),\\
            }
            \end{gathered}
        \end{equation}
        where $\gamma_K\colon k\mapsto (k,\zeta_z(k))$ (similarly for $\gamma_{g^{-1}Kg}$ and $\gamma_{K'}$), $C_{g^{-1}}$ is conjugation by $g^{-1}$ (similarly for the other such notated maps), and recall $\widetilde{\rho}=\pr_2$.  

        It follows from $H$-principality of $\lambda$ that $\zeta_z(k)=h\zeta_{z'}(g^{-1}kg)h^{-1}$, from which it follows that the top-left square of \cref{e:morita invts} commutes.  The bottom-left square commutes by \cref{l:gamma}.  The remaining two squares commute by the naturality of projection maps.  It is immediate that each of the left horizontal maps are isomorphisms, and that each restriction of $\widetilde{\rho}$ on the right is an isomorphism onto its image follows from the fact that $\rho$ is $G$-principal by \cref{c:bibundle form}.  It follows that $\calR_X$ and $\lambda_*\rho^*\calR_Y$ are naturally isomorphic, from which it follows from \cref{p:nat isom cohom} that $H^\bullet_\BI(G\ltimes X;\calR_X)\cong H^\bullet_\BI(G\ltimes X;\lambda_*\rho^*\calR_Y)$.  This, in turn is naturally isomorphic to $H^\bullet_\BI(H\ltimes Y,\calR_Y)$ by \cref{t:bredon-illman}, proving the first statement.

        It is straightforward to check that $c^k_{G\ltimes X}\in S_\BI^k(G\ltimes X;\calR_X)$, and in fact is a cocycle since isotropy representations among points in the same connected component of $X^{\Stab(x)}$ are equal.  It follows from the discussion above that the isomorphism $H_\BI^k(\calG\ltimes X;\calR_X)\to H_\BI^k(\calH\ltimes Y;\calR_Y)$ sends $c^k_{G\ltimes X}$ to a similarly-defined $c^k_{H\ltimes Y}$.  This proves the second statement.
    \end{proof}

    We end the paper by computing the twisted Bredon-Illman cohomology for our examples.
    
    \begin{example}[{\bf Groupoid A}]\labell{x:a3}
        We continue with the groupoid $\ZZ/2 \ltimes \SS^1$ from \cref{x:a1} and the coefficient system which is $\zed$ on the south pole $S$ and zero elsewhere from \cref{x:ab2}.  We compute $H_\BI^0(\ZZ/2\ltimes\SS^1;\calA)$ and $H_\BI^1(\ZZ/2\ltimes\SS^1;\calA)$.  The remaining cohomology groups are trivial, which follows from \cite[Theorem 7.3]{MuMu}. 
        The equivariant $0$-simplices are:
        \begin{enumerate}
            \item for each $x\in\SS^1$, the simplex $\Delta_0\times(\ZZ/2)/\langle1\rangle\to\SS^1$, with image equal to $\{x\}$; and
            \item the two simplices $\Delta_0\times(\ZZ/2)/(\ZZ/2)\to\SS^1$ with image $\{N\}$ or $\{S\}$.
        \end{enumerate}

        The equivariant $1$-simplices are:
        \begin{enumerate}
            \item orbits of paths $\Delta_1\times(\ZZ/2)/\langle1\rangle\to\SS^1$; and
            \item constant maps $\Delta_1\times(\ZZ/2)/(\ZZ/2)\to\SS^1$ to $N$ or $S$.
        \end{enumerate}

        Suppose $c\in S^0_\BI(\ZZ/2\ltimes\SS^1;\calA)$.  Since the only non-zero values of $\calA$ are at $S$,  $c$ vanishes on the $0$-simplices except for the simplex $\Delta_0\times(\ZZ/2)/(\ZZ/2)\to\SS^1$ with image in $S$;  this simplex gets sent to $\calA(S) = \ZZ$.    Since this integer can be arbitrary, we have $S^0_\BI(\ZZ/2\ltimes\SS^1;\calA)\cong\ZZ$.

        Computing the boundary of $c$ above, for a $1$-simplex $\sigma\colon\Delta_1\times(\ZZ/2)/K\to\SS^1$,
        $$\delta c(\sigma)=\calA\left([\widebar{1},\sigma((1-t)d_0+td_1,K)]\right)c(\sigma^{(0)})-c(\sigma^{(1)}).$$
        which is trivial unless $K=\ZZ/2$ and $\sigma$ takes image in $\{S\}$.  If $\sigma$ is a constant map to $S$, then $\sigma^{(0)}=\sigma^{(1)}$ and  the arrow $[\overline{1},\sigma((1-t)d_0+td_1,K)]$ will have a constant map and so be a unit, and so $\calA\left([\widebar{1},\sigma((1-t)d_0+td_1,K)]\right)$ is the identity map.   So $\delta c(\sigma) = 0$.   It follows that $H_\BI^0(\ZZ/2\ltimes\SS^1;\calA)\cong\ZZ$.

        Similarly, given $c\in S^1_\BI(\ZZ/2\ltimes\SS^1;\calA)$, the only simplex on which $c$ does not vanish is $\Delta_1\times(\ZZ/2)/(\ZZ/2)\to\SS^1$ with constant image $S$.  This also takes a value as an arbitrary integer, and so $S_\BI^1(\ZZ/2\ltimes\SS^1;\calA)\cong\ZZ$.

        When we compute the boundary of $c$ above, for a $2$-simplex $\tau\colon\Delta_2\times(\ZZ/2)/K\to\SS^1$,
        $$\delta c(\tau)=\calA\left([\widebar{1},\tau((1-t)d_0+td_1,K)]\right)c(\tau^{(0)})-c(\tau^{(1)})+c(\tau^{(2)}).$$
        If $K=\langle1\rangle$, or if $K=\ZZ/2$ and the image of $\tau$ is $\{N\}$, then this is trivial.  However, if $K=\ZZ/2$ and the image of $\tau$ is $\{S\}$, then $\tau^{(0)}=\tau^{(1)}=\tau^{(2)}$ and $[\widebar{1},\tau((1-t)d_0+td_1,\ZZ/2)]$ is sent to the identity homomorphism as above.  Thus  $\delta c(\tau)=c(\tau^{(0)})$ and the only way to have $\delta c =0$ is for $c = 0$.   It follows that $H^1_\BI(\ZZ/2\ltimes\SS^1;\calA)=0$.
    \end{example}
 
    \begin{example}[{\bf Groupoid B}] \labell{x:b3}
        Turning to $\U(1)\ltimes Y$ where $Y:=\U(1)\times_{\ZZ/2}\SS^1$, the equivariant $0$-simplices are:
        \begin{enumerate}
            \item for each $y\in Y$, the simplex $\Delta_0\times\U(1)/\langle1\rangle\to Y$, with image equal to $\{y\}$;
            \item simplices $\Delta_0\times\U(1)/\langle-1\rangle$ with image in one of the cross-caps.
        \end{enumerate}

        The equivariant $1$-simplices are:
        \begin{enumerate}
            \item orbits of paths $\Delta_1\times\U(1)/\langle1\rangle\to Y$; and
            \item orbits of paths $\Delta_1\times\U(1)/\langle-1\rangle\to Y$ with image contained in one of the cross-caps.
        \end{enumerate}

        Suppose $c\in S^0_\BI(\U(1)\ltimes Y;\lambda_*\rho^*\calA)$.  Then $c$ vanishes on all $0$-simplices except for those of the form $\Delta_0\times\U(1)/\langle-1\rangle\to Y$ with image in the bottom cross-cap, which must take value in $\calA(S_{\ZZ/2})=\ZZ$.  A similar computation to the above yields that $H^0_\BI(\U(1)\ltimes Y;\lambda_*\rho^*\calA)\cong\ZZ$.

        For $c\in S^1_\BI(\U(1)\ltimes Y;\lambda_*\rho^*\calA)$, again, $c$ vanishes on all but the $1$-simplices $\Delta_1\times\U(1)/\langle-1\rangle\to Y$ with image in the bottom cross-cap, which has value in $\ZZ$.  A computation similar to the above yields that $c$ is a cocycle if and only if $c(\tau^{(0)})=c(\tau^{(1)})-c(\tau^{(2)})$ for any $2$-simplex $\tau\colon\Delta_2\times\U(1)/\langle-1\rangle\to Y$ with image in the bottom cross-cap.  Here we see our compatibility condition come into play:  since this cross-cap is an entire orbit, there is an orbit twist map  $a\colon\Delta_1\times\U(1)/\langle-1\rangle$ collapsing any $1$-cell to a ``constant'' $1$-cell (it is constant on $\Delta_1\times\langle-1\rangle$).  Thus, $c$ is determined by its values on these constant $1$-cells.  For any constant $1$-cell $\sigma$, let $\tau$ be the constant $2$-cell with the same value at $(u,\langle-1\rangle)$.  The cocycle conditions then requires then that $c(\sigma)=2c(\sigma)$ since each face of $\tau$ is a copy of $\sigma$.  Therefore, $c=0$.  It follows that $H^1_\BI(\U(1)\ltimes Y;\lambda_*\rho^*\calA)=0$.  Similar to \cref{x:a3}, the higher cohomology groups are trivial.  Since $\ZZ/2\ltimes\SS^1$ and $\U(1)\ltimes Y$ are Morita equivalent, we have verified \cref{t:bredon-illman}.
    \end{example}    

    \begin{example}[{\bf Groupoid C}]\labell{x:c3}
        A computation similar to that of \cref{x:b3} also yields that $H^0_\BI(D_4\ltimes\SS^1;\psi^*\calA)\cong\ZZ$ and $H^1_\BI(D_4\ltimes\SS^1;\psi^*\calA)=0$ where $D_4\ltimes\SS^1$ is as in \cref{x:c1}. This is the same as the cohomology determined in \cref{x:b3}, which again verifies \cref{t:bredon-illman}, as $\psi$ is an equivariant weak equivalence (hence a Morita equivalence).  Since $\psi^*\calA$ is only non-zero on   the $\{N,S\}$ orbit, this can be interpreted as yielding the cohomology of the orbit space of $\{N,S\}$ in $D_4\ltimes\SS^1$, which is the same as that of $\{S\}$ in $\ZZ_2\ltimes\SS^1$.  This computation is not possible with ordinary Bredon-Illman cohomology:  in the ordinary setting, the coefficient system on  $\ZZ_2\ltimes\SS^1$ would have to assign the same value to both $N$ and $S$, and we would not get an analogue coefficient system living on this groupoid.   By defining the coefficient system on the fundamental groupoid instead, we obtain the desired result.
    \end{example}

    For our last example, we show how \cref{c:rep coeff sys} fails for similar action groupoids that are not Morita equivalent.

    \begin{example}[{\bf Groupoid D}]\labell{x:d5}
        Continuing \cref{x:d4}, we compute $H^0_\BI(\SO(n)\ltimes\SS^n;\calR^n)$.  
               
        Fix $c\in S^0_\BI(\SO(n)\ltimes\SS^n;\calR_n)$.
        The two poles $\{N,S\}$ of $\SS^n$ are the only $\SO(n)$-fixed points.  There is an arrow $[\widebar{1},N]$ induced by the inclusion $g\SO(n-1)g^{-1}\hookrightarrow\SO(n)$ for each $g$.  Each of these correspond to orbit twist maps between $0$-simplices $\sigma^g_N\colon(d_0,g\SO(n-1)g^{-1})\mapsto N$ and $\sigma_N\colon(d_0,\SO(n))\mapsto N$.  Thus, since $c$ respects compatibility, its value at $\sigma_N^g$ must be a restriction of its value at $\sigma_N$ for all $g$.   $\SO(n)$ is the union of all of these conjugate subgroups $g\SO(n-1)g^{-1}$ as $g$ runs through $\SO(n)$, and thus the value of $c$ at $\sigma_N$ is determined by its values on the $0$-simplices $\sigma^g_N$, which are in turn determined by $\sigma^1_N$ by \cref{r:cochains}.  A similar argument holds if $N$ is replaced with $S$.  Also, for any $x\in\SS^n$ that is fixed by a closed subgroup $H\leq\SO(n)$, we have $H\leq g\SO(n-1)g^{-1}$ for some $g$.  It follows from the fact that $c$ respects compatibility that the $0$-simplex $\Delta_0\times G/H\to \SS^n\colon (d_0,H)\mapsto x$ is sent by $c$ to the restriction of the representation of $$c(\Delta_0\times G/(g\SO(n-1)g^{-1})\to\SS^n\colon (d_0,g\SO(n-1)g^{-1})\mapsto x).$$ Thus, again applying \cref{r:cochains}, we see that it is sufficient to consider the values of $c$ on $0$-simplices of the form $\Delta_0\times\SO(n)/\SO(n-1)\to\SS^n$.

        Suppose now that $c$ is a cocycle, so $\delta c=0$.  For any $1$-simplex $\tau\colon\Delta_1\times\SO(n)/K\to\SS^n$, the cocycle condition reduces to $$\calR_n\!\left([\tau^{(0)}_K]\right)c(\tau^{(0)})=c(\tau^{(1)}).$$
        Suppose $\tau\colon\Delta_1\times\SO(n)/\SO(n-1)\to\SS^n$ is a $1$-simplex such that $u\mapsto\tau(u,\SO(n-1))$ is a path $p$ in the great circle $(\SS^n)^{\SO(n-1)}$.  Since $\calR_n[\widebar{1},p]$ is the identity map, the cocycle condition guarantees that the $0$-simplices corresponding to the endpoints of $p$ are sent to the same value by $c$; in particular, $c$ is constant along all $0$-simplices $\Delta_0\times\SO/\SO(n-1)\to\SS^n$ sending $(d_0,\SO(n-1))$ to the great circle $(\SS^n)^{\SO(n-1)}$.  Thus, a cocycle $c$ is completely determined by its value on the single $0$-simplex $\sigma^1_N\colon\Delta_0\times\SO/\SO(n-1)\to\SS^n$.
      
       Since $\sigma^1_N$ can be sent to an arbitrary representation of $\SO(n-1)$ by $c$, it follows that $H^0_\BI(\SO(n)\ltimes\SS^n;\calR_n)\cong\Rep(\SO(n-1))$.  In particular, $H^\bullet_\BI(\SO(n)\ltimes\SS^n;\calR_n)\cong H^\bullet_\BI(\SO(m)\ltimes\SS^m;\calR_m)$ if and only if $n=m$.  Recalling that the orbit spaces of these actions $\SO(n)\ltimes\SS^n$ are all diffeomorphic (see the footnote of \cref{x:d2}), this is an example in which twisted Bredon-Illman cohomology can differentiate between different stabilisers, which are subtler Morita invariants.
    \end{example}


\printbibliography

\end{document}